\def\cyr{
\renewcommand\rmdefault{wncyr}
\renewcommand\sfdefault{wncyss}
\renewcommand\encodingdefault{OT2}
\normalfont
\selectfont}
\DeclareMathAlphabet{\zap}{OT1}{pzc}{m}{it}
\DeclareTextFontCommand{\textcyr}{\cyr}
\def\yha{\rho}
\def\be{\begin{equation}}
\def\ee{\end{equation}}
\def\bea{\begin{eqnarray*}}
\def\eea{\end{eqnarray*}}
\newcommand{\rad}{\varrho}
\def\CC{\mathbb C}
\def\Ric{r}
\def\Rm{{\mathcal R}}
\newtheorem{main}{Theorem}
\DeclareMathOperator{\tr}{tr}
\DeclareMathOperator{\Hess}{Hess}
\DeclareMathOperator{\vol}{Vol}
\DeclareMathOperator{\End}{End}
\DeclareMathOperator{\Sym}{Sym}
\newtheorem{thm}{Theorem}[section]
\newtheorem{lem}[thm]{Lemma}
\newtheorem{prop}[thm]{Proposition}
\newtheorem{cor}[thm]{Corollary}
\newtheorem{defn}{Definition}
\newenvironment{remark}{\medskip \noindent {\bf Remark.}}{\hfill $\diamondsuit$ \\}
\newenvironment{note}{\medskip {\bf Note.}}{\hfill \rule{.5em}{1em} \\}
\newenvironment{rmk}{\mbox{ }\\{\bf  Remark}\mbox{ }}{
\hfill \bigskip}
\def\ZZ{{\mathbb Z}}
\def\QQ{{\mathbb Q}}
\def\RR{{\mathbb R}}
\def\CP{{\mathbb C \mathbb P}}
\def\eucl{{\boldsymbol\delta}}
\begin{document}

\title{Desingularizations of  Conformally\\ K\"ahler, Einstein Orbifolds}

\author{\parbox{1.75in}{\center Claude LeBrun\thanks{Supported in part by   NSF grant DMS-2203572.}\\
{\scriptsize  Stony Brook University} }\quad and ~  \parbox{1.5in}{\center  Tristan Ozuch\thanks{Supported in part by   NSF grant
DMS-2405328.}\\
{\scriptsize   MIT}}}

\date{}
\maketitle

\begin{abstract} 
Let $\{ (M,g_i)\}_{i=1}^\infty$ be a  sequence of smooth compact oriented  Einstein $4$-manifolds of
fixed Einstein constant $\lambda > 0$ that 
Gromov-Hausdorff converges
to  a $4$-dimensional  Einstein orbifold $(X,g_\infty )$.  Suppose, moreover, that 
 the limit metric $g_\infty$ is {\em Hermitian} with respect to some complex structure on the limit orbifold $X$, 
 that   $X$ has at least one singular point, and that every gravitational   instanton
that bubbles off from   $(M,g_i)\to (X,g_\infty )$  is  {\em anti-self-dual}.  Then, for all sufficiently large $i$,  
the given  $(M,g_i)$ are all   K\"ahler-Einstein.  As a consequence, the limit orbifold $(X,g_\infty )$ is also 
K\"ahler-Einstein, and  must in fact be one of the  orbifold limits  classified \cite{sunspot} by Odaka, Spotti, and Sun.  
  \end{abstract}

\section{Introduction}

Three decades ago, Anderson \cite{andprior} and Bando-Kasue-Nakajima \cite{bando2,bando1,bakanaka,nakajima} \linebreak 
 discovered a generalization of Gromov's compactness theorem \cite{gromhaus}  that gives 
   $4$-dimensional Einstein {\sf orbifolds} a previously-unsuspected  central role 
   in the theory of smooth compact  Einstein $4$-{manifolds}. 
For example,   if we are given a  sequence $(M_i, g_i)$ of smooth compact connected Einstein $4$-manifolds of  fixed  Einstein constant $\lambda >0$ 
and fixed Euler characteristic, with   volumes $\vol (M,g_i)$  uniformly bounded
away from zero,     then there exists a  subsequence $(M_{i_j},g_{i_j})$ that converges in  the 
   Gromov-Hausdorff distance  
to a compact connected  $4$-dimensional Einstein orbifold $(X, g_\infty )$ with the same Einstein constant $\lambda$. 
After further refining our subsequence, we may moreover arrange \cite{andche} that the manifolds $M_{i_j}$ are all  diffeomorphic\footnote{The 
 phenomenon of diffeomorphism-finiteness that is operative here 
has more recently been shown  \cite[Theorem 1.12]{cn}   to  hold under dramatically    weaker hypotheses.} to
 some reference  smooth compact $4$-manifold $M$.
The singularities of the limit orbifold $X$  are moreover  isolated, and so 
 constitute a finite subset $\mathfrak{S}\subset X$, while their  complement $X-\mathfrak{S}$    is   diffeomorphic to a connected 
open set $U\subset M$. Furthermore, after moving the metrics in the sequence by a suitable sequence of self-diffeomorphisms of $M$,
 the weak-limit 
metric $g_\infty$ on $X-\mathfrak{S}$ actually  becomes  the $C^\infty$ limit on compact subsets of the given sequence of Einstein metrics restricted to  $U\subset M$. Meanwhile, 
the Riemannian diameter of each connected component of $M-U$ shrinks to zero, but the  $L^2$ norm of the curvature 
on each such component remains bounded away from zero.  
After suitably scaling  up these high-curvature regions,  
their pointed Gromov-Hausdorff limits then yield  Ricci-flat ALE spaces that are said to {\em bubble off} from $M$. One subtlety
is that curvature can accumulate at different length scales, so that some of these ALE limits  could themselves be orbifolds. However, 
any  orbifold singularity of such an ALE space in turn  arises from  a {\em deeper bubble}. This  hierarchy of scales  however  terminates in  finitely many steps, and the {\em deepest bubbles}
that thereby arise  are therefore  smooth.  By reversing  the process, one can then   reconstruct  the smooth manifold $M$ from $X$ by \cite{andprior,bando1}  cutting out a neighborhood 
of each orbifold singularity,  replacing it with  a diffeomorphic copy of the corresponding bubble, and then  iterating the process until all the orbifold singularities have been removed.

While the  above results  indicate that   certain Einstein orbifolds can arise as limits of smooth Einstein $4$-manifolds, the second author's previous results  \cite{ozu1,ozu2,ozu3} have definitively shown that not
 every Einstein orbifold arises in this way. The 
 present  article focuses on the   problem of   determining  which   compact $4$-dimensional {\sf K\"ahler-Einstein} orbifolds with $\lambda > 0$ 
can be expressed as limits of  sequences of  smooth  Einstein $4$-manifolds. If one  specializes the problem by requiring  that all  the smooth Einstein manifolds in the limiting sequence 
 be  {\sf K\"ahler-Einstein} manifolds, then 
a complete answer has  already   been given    by Odaka-Spotti-Sun \cite{sunspot}. 
But  if we now relax our assumptions, and   allow the metrics in the given sequence to have arbitrary holonomy, one might still 
hope    that the 
list of possible K\"ahler-Einstein   orbifold limits might remain unaffected.  Unfortunately, however, there is a  serious technical    obstacle to proving such a statement.   The root cause of  this difficulty is that   we currently lack a 
 complete classification of 
Ricci-flat ALE manifolds. 
 Indeed, all known examples of such ALE manifolds  are half-conformally-flat, so that, when correctly oriented,  they 
 satisfy $W^+\equiv 0$, where $W^+\in \End_0 (\Lambda^+)$ denotes the self-dual piece of the Weyl curvature tensor. While  a longstanding conjecture claims 
 that all Ricci-flat ALE $4$-manifolds are  in fact of this type, the  best available result in support of this conjecture  \cite{nakamass} assumes  
 the existence of a spin structure on  the universal cover of the ALE space for which one of the chiral spin bundles has trivial monodromy at infinity;
  but, unfortunately, 
 no one has yet discovered a compelling reason why  this assumption 
  should {\em a priori}
  always be valid. In this paper, our approach     will  skirt  this issue  entirely  via recourse to the following  definition:

\begin{defn} 
\label{ranger}  
Let $(M^4, g_i)$ be a 
sequence of compact oriented $4$-dimensional Einstein manifolds
that converges to a positive-scalar-curvature compact Einstein orbifold $(X^4, g_\infty)$ 
in the Gromov-Hausdorff sense.  We will then say that $(M, g_i)$  is an {\sf admissible sequence}, and that 
$(X, g_\infty)$ is an {\sf admissible orbifold limit},
if every  Ricci-flat ALE space $(Y, \mathsf{h})$ that bubbles off from the $(M, g_i)$ 
satisfies
 $W^+\equiv 0$ with respect to the induced orientation. \end{defn}
 
 While restricting  our attention to admissible sequences  will  of course  narrow the scope of our 
 investigation, it will nonetheless   allow  us to prove   the following rather satisfying results: 

\begin{main} \label{alpha} 
Let  $(X^4 , g_\infty)$ be  a compact K\"ahler-Einstein orbifold of Einstein constant $\lambda >0$, and let 
 $(M^4, g_i)$
be an admissible  sequence of compact oriented Einstein manifolds   with  $(X , g_\infty)$ as its orbifold limit.
Then, for all $i \gg 0$,    the $(M, g_i)$ are actually  all K\"ahler-Einstein. 
In particular,  $(X , g_\infty)$ must be one of the K\"ahler-Einstein orbifolds classified by    Odaka-Spotti-Sun. 
\end{main}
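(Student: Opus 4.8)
The plan is to show that, for all $i\gg 0$, the Einstein metric $g_i$ is conformally K\"ahler, and then to upgrade this to the assertion that $g_i$ is K\"ahler--Einstein; granting that, the last sentence is immediate, since by \cite{sunspot} a Gromov--Hausdorff limit of K\"ahler--Einstein $4$-manifolds of fixed positive Einstein constant is precisely an orbifold on the Odaka--Spotti--Sun list. The crucial reduction is to a pointwise curvature criterion. On an oriented K\"ahler surface the self-dual Weyl operator $W^+\in\Sym^2_0(\Lambda^+)$ has, at every point, eigenvalues $\tfrac{\scal}{6},-\tfrac{\scal}{12},-\tfrac{\scal}{12}$; equivalently, writing $w_1\le w_2\le w_3$ for its eigenvalues, the non-negative defect
\[
\mathfrak{d}_g \ :=\ \tfrac{1}{3\sqrt6}\,|W^+|^3-\det W^+ ,
\]
which vanishes exactly where $w_1=w_2$ or $W^+=0$, is identically zero. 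Conversely, by Derdzinski's theorem a compact oriented Einstein $4$-manifold with $\lambda>0$ on which $\mathfrak{d}_g\equiv 0$ is either anti-self-dual --- hence, by Hitchin, a round $S^4$ or a standard $\overline{\mathbb{CP}^2}$, neither of which can Gromov--Hausdorff converge onto an orbifold possessing a genuine singular point while remaining within a fixed diffeotype --- or else conformal to the K\"ahler metric $\hat g = |W^+|_g^{2/3}g$. The task thus becomes: show that $\mathfrak{d}_{g_i}\equiv 0$ for all $i\gg 0$. As a first step, a K\"ahler--Einstein metric has $\nabla W^+\equiv 0$ and $\mathfrak{d}\equiv 0$, so the $C^\infty_{\mathrm{loc}}$-convergence $g_i\to g_\infty$ on the regular part $X-\mathfrak S$ already forces $\mathfrak{d}_{g_i}\to 0$ there.

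Admissibility enters through the Bochner method. Since $\delta W^+=0$ on an Einstein manifold, $W^+$ satisfies the Weitzenb\"ock formula $\nabla^*\nabla W^+=\tfrac{\scal}{2}\,W^+-6\bigl((W^+)^2-\tfrac13|W^+|^2\,\mathrm{Id}\bigr)$; pairing with $W^+$, using $\tr\!\big((W^+)^3\big)=3\det W^+$, and integrating over the closed manifold $M$ gives
\[
36\int_M\det W^+\,dV \ =\ \scal\int_M|W^+|^2\,dV \ -\ 2\int_M|\nabla W^+|^2\,dV .
\]
Because every bubble $(Y_\alpha,\mathsf h_\alpha)$ is anti-self-dual, no self-dual Weyl curvature is carried off by the bubbling: refining, via the signature formula, the standard $L^2$-conservation of curvature under Anderson--Bando--Kasue--Nakajima convergence to its self-dual component yields $\int_M|W^+(g_i)|^2\,dV\to\int_X|W^+(g_\infty)|^2\,dV=\tfrac{\scal^2}{24}\Vol(X,g_\infty)$, while $\Vol(M,g_i)\to\Vol(X,g_\infty)$ and, by the same token, $\int_M|W^+(g_i)|^3\,dV\to\int_X|W^+(g_\infty)|^3\,dV$. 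A quantitative analysis of the necks --- where, precisely because each bubble is \emph{exactly} Ricci-flat and anti-self-dual, $W^+(g_i)$ and $\nabla W^+(g_i)$ are genuinely small relative to the ambient curvature scale --- shows moreover that $\int_M|\nabla W^+(g_i)|^2\,dV\to 0$. Feeding all of this into the displayed identity, and using that $\det W^+=\tfrac1{3\sqrt6}|W^+|^3$ pointwise on the K\"ahler--Einstein limit, forces $\int_M\mathfrak{d}_{g_i}\,dV\to 0$.

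The heart of the matter, and what I expect to be the main obstacle, is to pass from ``$\int_M\mathfrak{d}_{g_i}\,dV\to 0$'' (together with $C^0_{\mathrm{loc}}$-smallness on $X-\mathfrak S$) to the pointwise vanishing ``$\mathfrak{d}_{g_i}\equiv 0$ for $i\gg 0$''. The obstruction lives entirely in the neck and bubble regions and is structural: $\mathfrak{d}$ is homogeneous of degree $6$ in $W^+$, so the only effective statement about a deep bubble concerns the scale-invariant ratio $\det W^+/|W^+|^3$; and although every admissible bubble is \emph{exactly} anti-self-dual, $W^+(g_i)$ does not vanish on the necks, where it is a superposition of the small but degenerate pull-back of $W^+(g_\infty)$ from the orbifold point and of the gluing error, a priori of comparable magnitude and of uncontrolled eigenvalue type. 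One must show that this superposition is nonetheless asymptotically degenerate \emph{of the correct kind}, uniformly over the entire hierarchy of bubbling scales, and then run an elliptic / unique-continuation (\L ojasiewicz-type) argument --- again built on the Weitzenb\"ock formula, which realizes $\mathfrak{d}_{g_i}$ as a subsolution of a Schr\"odinger-type operator --- to conclude that the already-small defect vanishes identically. It is exactly here that admissibility is indispensable: a bubbled-off gravitational instanton with $W^+\not\equiv 0$ would both leak positive $L^2$-mass of $W^+$ into the limit and leave an $O(1)$ scale-invariant defect on its neck, and every estimate above would collapse.

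Finally, once $\mathfrak{d}_{g_i}\equiv 0$, Derdzinski's theorem makes $g_i$ conformally K\"ahler; being also Einstein with $\lambda>0$, it is then, by the classification of conformally-K\"ahler Einstein $4$-manifolds of positive Einstein constant, either K\"ahler--Einstein or, up to rescaling, the Page metric on $\mathbb{CP}^2\#\overline{\mathbb{CP}^2}$ or the Chen--LeBrun--Weber metric on $\mathbb{CP}^2\#2\overline{\mathbb{CP}^2}$. The latter two are rigid and isolated among Einstein metrics and therefore cannot occur in a sequence Gromov--Hausdorff converging to an orbifold with a genuine singular point; hence $g_i$ is K\"ahler--Einstein for all $i\gg 0$, and \cite{sunspot} identifies $(X,g_\infty)$ as claimed.
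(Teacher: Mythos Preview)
Your proposal identifies the correct endgame (conformally K\"ahler $\Rightarrow$ K\"ahler--Einstein via ruling out Page and Chen--LeBrun--Weber), but it has a genuine gap at the step you yourself flag as ``the heart of the matter'': passing from $\int_M \mathfrak{d}_{g_i}\,dV\to 0$ to the \emph{pointwise} vanishing $\mathfrak{d}_{g_i}\equiv 0$. The condition $\mathfrak{d}=0$ (equivalently, $W^+$ has a repeated lowest eigenvalue) is a \emph{closed} condition, and no amount of integral or $C^0_{\mathrm{loc}}$ smallness will force it to hold exactly. Your suggested mechanism --- that $\mathfrak{d}_{g_i}$ is a ``subsolution of a Schr\"odinger-type operator'' amenable to a ``\L ojasiewicz-type'' argument --- is not substantiated: $\mathfrak{d}$ is a cubic invariant of $W^+$, and the Weitzenb\"ock formula controls $W^+$, not $\mathfrak{d}$; there is no evident differential inequality for $\mathfrak{d}$ that would give a gap theorem of the form ``$\int\mathfrak{d}<\varepsilon \Rightarrow \mathfrak{d}\equiv 0$'' for Einstein metrics. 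The neck analysis you allude to (showing the superposition of the pulled-back $W^+(g_\infty)$ and the gluing error is ``degenerate of the correct kind'') is precisely where the real work lies, and it cannot be done at the integral level.

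The paper circumvents this difficulty by replacing Derdzi\'nski's closed criterion with Wu's \emph{open} criterion $\det W^+>0$, which \emph{can} be verified by approximation. The argument then proceeds pointwise from the outset: using the gluing--perturbation framework of \cite{ozu1,ozu2}, one constructs explicit approximate-Einstein-modulo-obstructions metrics $g^A_{t_i,v_i}$ with $\|g_i-g^A_{t_i,v_i}\|_{C^{2,\alpha}_{\beta,*}}=o(t_i)$, and shows that the eigenvalues of $W^+(g^A_{t_i,v_i})$ are everywhere close to those of $W^+(g_\infty)$ at a corresponding point. The key input in the bubble region is a lemma of Biquard type: the first-order variation of the self-dual curvature of the ALE model in the direction of the leading gluing correction $h_2$ has \emph{constant} eigenvalues, equal to those of $\mathcal{R}^+(g_\infty)$ at the singular point --- which, since the orbifold is K\"ahler--Einstein and Biquard's obstruction vanishes, are $(0,0,\lambda)$. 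Hence $W^+(g_i)$ inherits two negative and one positive eigenvalue everywhere, so $\det W^+(g_i)>0$, and one concludes via \cite{lebdet,pengwu}. Your integral Weitzenb\"ock computation is correct as far as it goes, but it is not what closes the argument.
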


Indeed,  essentially  the same result will  also follow even if one merely assumes that  $g_\infty$ is {\em Hermitian}, in the  sense that 
$g_\infty (J\cdot, J\cdot )= g_\infty$ for some integrable orbifold complex structure $J$ on $X$. 

\begin{main} \label{beta} 
Let  $(X^4 , g_\infty)$ be  a compact connected 
Einstein orbifold of Einstein constant $\lambda >0$, and suppose   that $g_\infty$ is {\em Hermitian}
with respect to some integrable  orbifold complex structure $J$ on $X$. 
 If $(X , g_\infty)$ has at least one singular point, and if $(M, g_i)\to (X , g_\infty)$ is an admissible  sequence of Einstein manifolds
 converging to $X$, then   the $(M, g_i)$ are  K\"ahler-Einstein for all $i \gg 0$,  
 and 
  $(X , g_\infty)$ is one of the K\"ahler-Einstein  orbifold  limits classified by    Odaka-Spotti-Sun. 
\end{main}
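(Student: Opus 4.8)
\medskip
\noindent\textbf{Proof strategy.}
The plan is to track the conformally invariant \emph{self-dual Weyl energy} $\mathcal{W}^+(g):=\int|W^+_g|^2\,dV_g$ through the degeneration, and thereby reduce everything to the Kähler--Einstein case already settled in Theorem~\ref{alpha}. Since $g_\infty$ is Hermitian and Einstein on the regular locus $X\setminus\mathfrak{S}$, Derdzinski's structure theorem applies: either $g_\infty$ is Kähler--Einstein, in which case Theorem~\ref{alpha} gives the conclusion at once, or else $g_\infty$ is globally conformal, through a positive power of $|W^+_{g_\infty}|$, to an extremal Kähler orbifold metric $\hat g_\infty$ on $X\setminus\mathfrak{S}$ with \emph{non-constant} scalar curvature $\hat s_\infty$. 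Assume the second case; we will derive a contradiction. By conformal invariance, $\mathcal{W}^+(g_\infty)=\tfrac{1}{24}\int_X\hat s_\infty^{\,2}\,dV_{\hat g_\infty}$, the Calabi energy of the extremal metric $\hat g_\infty$; because $\hat s_\infty$ is non-constant, this strictly exceeds the topological quantity $\tfrac{1}{24}\bigl(\int_X\hat s_\infty\,dV_{\hat g_\infty}\bigr)^2/\Vol(X,\hat g_\infty)$, which is precisely the value $\tfrac{2\lambda^2}{3}\Vol(X,g_\infty)$ one would be forced to if $g_\infty$ were Kähler--Einstein. So $g_\infty$ carries strictly more self-dual Weyl energy than a Kähler--Einstein metric of the same volume could, and the rest of the argument shows that this is incompatible with the existence of an admissible desingularization.

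The core step is \emph{conservation of self-dual Weyl energy in the limit}: $\mathcal{W}^+(g_i)\to\mathcal{W}^+(g_\infty)$. On the open set $U\subset M$ diffeomorphic to $X\setminus\mathfrak{S}$ the metrics converge in $C^\infty_{\mathrm{loc}}$, so the contributions over $U$ converge; the point is that no $W^+$-energy escapes into the collapsing necks, and this is exactly where admissibility is used. I would combine: (i) the Gauss--Bonnet and signature identities for the Einstein manifolds $(M,g_i)$, which give $\mathcal{W}^+(g_i)=2\pi^2\bigl(2\chi(M)+3\tau(M)\bigr)-\tfrac{\lambda^2}{3}\Vol(M,g_i)$; (ii) their orbifold analogue for $(X,g_\infty)$, with the standard $\eta$-type corrections at the points of $\mathfrak{S}$; and (iii) the ALE analogues for every Ricci-flat bubble $(Y_k,\mathsf{h}_k)$ in the bubble tree, which --- since each $Y_k$ is Ricci-flat and, by admissibility, anti-self-dual --- reduce to $0=2\chi(Y_k)+3\tau(Y_k)-(\text{correction at infinity})$ at every scale. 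Using the diffeomorphism description of $M$ as $X$ with the $Y_k$ glued in, all the boundary corrections telescope and cancel, and together with the convergence of total volumes $\Vol(M,g_i)\to\Vol(X,g_\infty)$ this yields the desired conservation. I expect this multi-scale bookkeeping --- in particular verifying that curvature cannot concentrate in the self-dual part anywhere along the (a priori iterated) bubble tree, and that the orbifold and ALE correction terms match level by level --- to be the principal technical obstacle.

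Granting conservation, I would transfer the problem to the complex surface $M$. Each bubble, being anti-self-dual and Ricci-flat, has flat $\Lambda^+$, hence carries (up to monodromy at infinity) a parallel self-dual form and is of Kähler type; since its cross-section is modelled on the standard $\CC^2/\Gamma$ appearing in $(X,J)$ near a point of $\mathfrak{S}$, the complex structure $J$ on $X\setminus\mathfrak{S}$ extends, for $i\gg 0$, to a complex structure on $M$, exhibiting $(M,J)$ as a resolution of $(X,J)$ that leaves the relevant characteristic numbers unchanged. On such a complex surface one has the sharp lower bound of LeBrun and Chen--LeBrun--Weber, $\mathcal{W}^+(g)\ge \tfrac{2\pi^2}{3}\,\Omega(M,J)$ for every metric $g$, with equality exactly for conformally Kähler--Einstein metrics; the same bound holds on the orbifold $(X,g_\infty)$, where it is saturated precisely because $g_\infty$ is conformally Kähler--Einstein by Derdzinski, so $\mathcal{W}^+(g_\infty)=\tfrac{2\pi^2}{3}\,\Omega(X,J)=\tfrac{2\pi^2}{3}\,\Omega(M,J)$. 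Hence $\mathcal{W}^+(g_i)\to\tfrac{2\pi^2}{3}\,\Omega(M,J)$, i.e. the $(M,g_i)$ are minimizing for $\mathcal{W}^+$; feeding this into a quantitative rigidity of the equality case --- an Einstein metric on $M$ with near-minimal self-dual Weyl energy must itself be conformally Kähler--Einstein --- forces $g_i$ to be conformally Kähler--Einstein for all $i\gg 0$.

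Finally, by Derdzinski's theorem together with the classification of compact Hermitian Einstein four-manifolds, each such $g_i$ is Kähler--Einstein: the only non-Kähler alternatives, the Page and Chen--LeBrun--Weber metrics, live on the smooth --- and rigid --- surfaces $\CP^2\#\overline{\CP^2}$ and $\CP^2\#2\overline{\CP^2}$, so a genuinely degenerating sequence cannot consist of them. But a convergent sequence of Kähler--Einstein metrics has a Kähler--Einstein orbifold as its limit, so $g_\infty$ would be Kähler--Einstein, contrary to our standing assumption. Therefore the second (non-Kähler) case never arises: $g_\infty$ is Kähler--Einstein, and the conclusion --- that the $(M,g_i)$ are Kähler--Einstein for $i\gg 0$ and that $(X,g_\infty)$ appears on the Odaka--Spotti--Sun list \cite{sunspot} --- follows from Theorem~\ref{alpha}. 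Besides the Weyl-energy conservation, the steps I expect to require the most care are the gluing of $J$ across the necks (which depends on the anti-self-duality of the bubbles) and the passage from "minimizing'' to "conformally Kähler--Einstein'' for Einstein metrics on $M$.
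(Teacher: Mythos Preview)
Your approach has a genuine gap at the ``quantitative rigidity'' step: you assert that an Einstein metric on $M$ with near-minimal self-dual Weyl energy $\mathcal{W}^+$ must be conformally K\"ahler--Einstein, but no such result is known, and proving it is essentially as hard as Theorem~\ref{beta} itself. Knowing that $\mathcal{W}^+(g_i)\to\mathcal{W}^+_{\min}$ tells you nothing pointwise about $W^+(g_i)$; without a gap theorem of the form ``either $\mathcal{W}^+(g)=\mathcal{W}^+_{\min}$ or $\mathcal{W}^+(g)\geq \mathcal{W}^+_{\min}+\epsilon$ for Einstein $g$ on $M$,'' there is no mechanism to pass from near-minimality of the integral to the structural conclusion. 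Compactness does not rescue you either, because the $g_i$ converge to the \emph{singular} orbifold $(X,g_\infty)$, not to a smooth minimizer on $M$, so you cannot argue that the $g_i$ are eventually $C^2$-close to a smooth conformally K\"ahler Einstein metric. Your invocation of the bound $\mathcal{W}^+(g)\geq\tfrac{2\pi^2}{3}\Omega(M,J)$ is also loose: $\Omega$ is never defined, and the precise LeBrun--type inequality you need, together with its equality characterization and the identity $\Omega(X,J)=\Omega(M,J)$, would have to be pinned down before the saturation claim makes sense.

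The paper proceeds along an entirely different line, replacing integral quantities by a \emph{pointwise}, conformally invariant, open condition. By Theorem~\ref{hemhaw}, the Hermitian Einstein orbifold $(X,g_\infty)$ satisfies Wu's criterion $\det W^+>0$ everywhere. The technical heart of the proof (\S\ref{simplest}--\S\ref{forest}) is a gluing analysis, built on Ozuch's Einstein-modulo-obstructions approximations $g^A_{t,v}$, showing that the eigenvalues of $W^+(g_i)$ are, at every point of $M$, uniformly close to those of $W^+(g_\infty)$ at a corresponding point of $X$. The delicate region is deep inside each bubble, where one uses an extension of Biquard's computation (Lemma~\ref{self-dual-curvature}): the first-order variation of $\mathcal{R}^+$ on the anti-self-dual ALE piece in the direction $h_2$ has \emph{constant} eigenvalues equal to those of $\mathcal{R}^+(g_\infty)$ at the singular point, and the latter are $(0,0,\lambda)$ precisely because the Biquard obstruction forces an eigenvalue of $\mathcal{R}^+(g_\infty)(p)$ to vanish while the Hermitian condition forces the two lower eigenvalues of $W^+$ to coincide. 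Since $\det W^+>0$ is open, it passes to the $g_i$ for $i\gg 0$; then \cite{lebdet,pengwu} force each such $g_i$ to be conformally K\"ahler, and your final paragraph (ruling out the rigid Page and Chen--LeBrun--Weber metrics when $X$ is genuinely singular) is exactly how the paper concludes. The pointwise, open nature of Wu's criterion is precisely the local control your integral route lacks.
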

%

\section{Singularities of Type T}

\label{type-t}

When $(X,g_\infty )$ is the orbifold limit  of some admissible sequence $(M, g_i)$ of Einstein $4$-manifolds, 
we will show,  in Proposition \ref{once}  below, that 
its orbifold singularities must all have the following special property: 


\begin{defn} 
\label{loner} 
An isolated orbifold singularity of an oriented  orbifold $X^4$ will be said to be  of  {\sf  type T}
if it has an open neighborhood  modeled on $\RR^4/\Gamma$ for some finite subgroup $\Gamma < SO(4)$
for which  $\RR^4/\Gamma$  is also the tangent-cone-at-infinity of an  
 oriented   {\sf anti-self-dual} Ricci-flat  ALE manifold $(Y^4,\mathsf{h})$.
\end{defn}

%

This definition can now
 be restated  in a  more concrete and effective form, as an immediate  consequence of  the results
  of \c{S}uvaina \cite{ioana2} and Wright \cite{wright}:

\begin{prop} 
\label{revealed} An oriented singularity $\RR^4/\Gamma$ is 
of {\sf type T} iff $\Gamma < SO(4)$ is conjugate to a subgroup of $U(2)$ of one of the two 
following types: 

\begin{itemize} 
\item Finite subgroups $\Gamma\subset SU(2)$. These are  classified by the simply-laced Dynkin diagrams,
and their images in $SO(3) = SU(2)/\ZZ_2$ are  classical symmetry groups of regular polygons or platonic solids.
\begin{center}
  \begin{tabular}{| l | c | r |}
    \hline
Dynkin Diagram  &      $\Gamma\subset SU(2)$& Classical Figure  \\ \hline \hline
  $A_k$ \hspace{.7cm} 
\begin{minipage}[c]{0.4in}
\setlength{\unitlength}{1ex}
\begin{picture}(50,5)(0,-2.5)
\put(-2,0){\circle*{1}}
\put(0, -0.1){$\ldots$}
\put(4, 0){\circle*{1}}
\put(6, 0){\circle*{1}}
\put(8, 0){\circle*{1}}
\put(-2,0){\line(1,0){1.5}}
\put(3,0){\line(1,0){1}}
\put(4,0){\line(1,0){2}}
\put(6,0){\line(1,0){2}}
\end{picture}
\end{minipage}
&    \raisebox{.01in}{Cyclic} $\cong \ZZ_{k+1}$&\raisebox{-.07in}{\includegraphics[height=.25in]{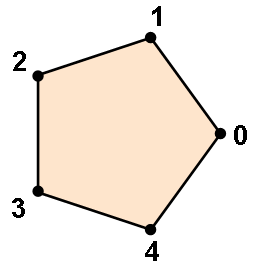}} \qquad  \qquad\\ \hline
 $D_k$ \hspace{.7cm}
\begin{minipage}[c]{0.4in}
\setlength{\unitlength}{1ex}
\begin{picture}(50,5)(0,-2)
\put(-2,0){\circle*{1}}
\put(0, -0.05){$\ldots$}
\put(4, 0){\circle*{1}}
\put(6, 0){\circle*{1}}
\put(8, 0){\circle*{1}}
\put(-2,0){\line(1,0){1.5}}
\put(3,0){\line(1,0){1}}
\put(4,0){\line(1,0){2}}
\put(6,0){\line(1,0){2}}
\put(6,0){\line(0,1){2}}
\put(6, 2){\circle*{1}}
\end{picture} 
\end{minipage}
&    \raisebox{.02in}{Binary Dihedral} &\raisebox{-.11in}{\includegraphics[height=.3in]{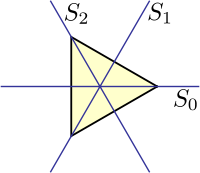}} \qquad  \qquad\\ \hline
  $E_6$  \hspace{.7cm}
\begin{minipage}[c]{0.4in}
\setlength{\unitlength}{1ex}
\begin{picture}(50,5)(0,-2)
\put(0,0){\circle*{1}}
\put(4, 2){\circle*{1}}
\put(2, 0){\circle*{1}}
\put(4, 0){\circle*{1}}
\put(6, 0){\circle*{1}}
\put(8, 0){\circle*{1}}
\put(0,0){\line(1,0){2}}
\put(2,0){\line(1,0){2}}
\put(4,0){\line(1,0){2}}
\put(6,0){\line(1,0){2}}
\put(4,0){\line(0,1){2}}
\end{picture} 
\end{minipage}
&  \raisebox{.02in}{Binary Tetrahedral} & \raisebox{-.11in}{\includegraphics[height=.3in]{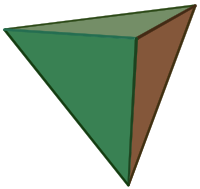}}  \qquad  \qquad\\ \hline
  $E_7$ \hspace{.7cm}
\begin{minipage}[c]{0.4in}
\setlength{\unitlength}{1ex}
\begin{picture}(50,5)(0,-2)
\put(-2,0){\circle*{1}}
\put(0,0){\circle*{1}}
\put(4, 2){\circle*{1}}
\put(2, 0){\circle*{1}}
\put(4, 0){\circle*{1}}
\put(6, 0){\circle*{1}}
\put(8, 0){\circle*{1}}
\put(-2,0){\line(1,0){2}}
\put(0,0){\line(1,0){2}}
\put(2,0){\line(1,0){2}}
\put(4,0){\line(1,0){2}}
\put(6,0){\line(1,0){2}}
\put(4,0){\line(0,1){2}}
\end{picture} 
\end{minipage}  &  \raisebox{.02in}{Binary Octahedral}& \raisebox{-.11in}{\includegraphics[height=.3in]{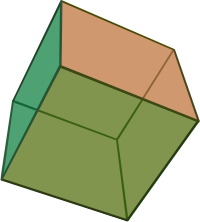}} \qquad  \qquad\\ \hline
 $E_8$ \hspace{.7cm}
\begin{minipage}[c]{0.4in}
\setlength{\unitlength}{1ex}
\begin{picture}(50,5)(0,-2)
\put(-4,0){\circle*{1}}
\put(-2,0){\circle*{1}}
\put(0,0){\circle*{1}}
\put(4, 2){\circle*{1}}
\put(2, 0){\circle*{1}}
\put(4, 0){\circle*{1}}
\put(6, 0){\circle*{1}}
\put(8, 0){\circle*{1}}
\put(-4,0){\line(1,0){2}}
\put(-2,0){\line(1,0){2}}
\put(0,0){\line(1,0){2}}
\put(2,0){\line(1,0){2}}
\put(4,0){\line(1,0){2}}
\put(6,0){\line(1,0){2}}
\put(4,0){\line(0,1){2}}
\end{picture} 
\end{minipage}
&  \raisebox{.02in}{Binary Icosahedral}& \raisebox{-.11in}{\includegraphics[height=.3in]{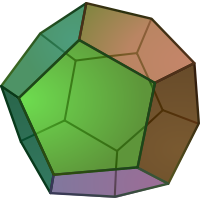}}  \qquad  \qquad\\ \hline  \end{tabular}
\end{center}

For each such $\Gamma$, 
 the corresponding quotient  $\CC^2/\Gamma$ is   {\em \cite{krontor}} the tangent cone at infinity
of a  family of  ALE  hyper-K\"ahler  $4$-manifolds, each of which is  obtained,  up to diffeomorphism,  by plumbing together copies
of $T^*S^2$ according to the intersection pattern dual to   the Dynkin diagram. 
\item Cyclic groups $\Gamma \cong \ZZ_{\ell m^2} < U(2)$, $m \geq 2$,  that are generated by  
$$
\left[\begin{array}{cc}\zeta &  \\& \zeta^{\ell mn-1}\end{array}\right] 
$$
for $\zeta=e^{2\pi i /\ell m^2}$, with   $n< m$ and  $\gcd (m,n)= 1$. Such  cyclic subgroups are often  indicated  {\em \cite{kolshep}} 
   by the shorthand  $\frac{1}{\ell m^2}(1,\ell m n-1)$.
For any such  subgroup,  
$\CC^2/\Gamma$ is  the tangent cone at infinity   of a
free $\ZZ_m$-quotient  of a hyper-K\"ahler manifold of type $A_{\ell m-1}$. Conversely,  every  non-simply-connected Ricci-flat ALE K\"ahler surface is  {\em \cite{ioana2}} one of these.
\end{itemize} 
\end{prop}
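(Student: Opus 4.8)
The plan is to reformulate type T in a way that makes the classifications of \c{S}uvaina \cite{ioana2} and Wright \cite{wright} apply directly, and then to read off the two lists. Suppose $\RR^4/\Gamma$ is of type T, witnessed by an oriented anti-self-dual Ricci-flat ALE $4$-manifold $(Y,\mathsf{h})$ whose tangent cone at infinity is $\RR^4/\Gamma$. First I would observe that $(Y,\mathsf{h})$ is locally hyper-K\"ahler: since $\mathsf{h}$ is Ricci-flat, its curvature operator $\mathcal{R}\colon\Lambda^2\to\Lambda^2$ preserves the splitting $\Lambda^2=\Lambda^+\oplus\Lambda^-$ and acts on the two summands as $W^+$ and $W^-$ (the off-diagonal trace-free Ricci part and the scalar part of $\mathcal{R}$ both vanish); as $W^+\equiv 0$ by hypothesis, $\mathcal{R}$ annihilates $\Lambda^+$, so by the Ambrose-Singer theorem the restricted holonomy group of $(Y,\mathsf{h})$ is contained in the $SU(2)\subset SO(4)$ that acts trivially on $\Lambda^+$. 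Since Ricci-flat ALE spaces have finite fundamental group \cite{andprior,bando1}, the universal cover $(\tilde Y,\tilde{\mathsf{h}})$ is then a \emph{globally} hyper-K\"ahler ALE $4$-manifold; by Kronheimer's theory \cite{krontor}, every such space is the minimal resolution of $\CC^2/\Gamma'$ for some finite $\Gamma'<SU(2)$, which is therefore its group at infinity. Consequently $Y=\tilde Y/F$ for $F:=\pi_1(Y)$ acting freely, isometrically and orientation-preservingly, and $\Gamma$ sits in an extension $1\to\Gamma'\to\Gamma\to F\to 1$ that exhibits it as the group at infinity of $Y$.

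With this reduction in place, the proposition becomes the problem of identifying exactly which groups occur at infinity of oriented anti-self-dual Ricci-flat ALE $4$-manifolds. When $F$ is trivial this is Kronheimer's construction \cite{krontor}: every $\Gamma'<SU(2)$ occurs, realized by the hyper-K\"ahler space $X_{\Gamma'}$, which is scalar-flat K\"ahler --- hence anti-self-dual in its complex orientation --- and is diffeomorphic to the plumbing of copies of $T^*S^2$ along the corresponding simply-laced Dynkin diagram. When $F$ is nontrivial I would invoke Wright's classification \cite{wright}, which constrains the possible groups at infinity (forcing them, in particular, to be conjugate into $U(2)$) and, together with the hyper-K\"ahler structure of $\tilde Y$, leaves only the case $\Gamma'\cong\ZZ_{\ell m}$ of type $A_{\ell m-1}$ with $F\cong\ZZ_m$ ($m\geq 2$) acting freely; the extension $\Gamma$ is then cyclic of order $\ell m^2$ and, after conjugation into $U(2)$, is generated by $\operatorname{diag}(\zeta,\zeta^{\ell mn-1})$ with $\zeta=e^{2\pi i/\ell m^2}$, $\gcd(m,n)=1$ and $n<m$. \c{S}uvaina's theorem \cite{ioana2} supplies the complementary geometric description --- such a $Y$ is precisely a free $\ZZ_m$-quotient of the $A_{\ell m-1}$ hyper-K\"ahler ALE manifold, and conversely every non-simply-connected Ricci-flat ALE K\"ahler surface is of this type. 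In the opposite direction, all of the listed $\Gamma$ really do arise, via these same two constructions (any free orientation-preserving isometric quotient of a scalar-flat K\"ahler ALE surface is again anti-self-dual, Ricci-flat and ALE, with the expected tangent cone). Putting these statements together gives both implications of the proposition.

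The one genuinely delicate point --- and the reason \c{S}uvaina's list alone does not settle the matter --- is the step, in the $F$-nontrivial case, from ``$Y$ is a free isometric orientation-preserving quotient of a hyper-K\"ahler ALE space'' to ``$\Gamma$ is conjugate into $U(2)$ and appears on the stated list''. Such a quotient need not be K\"ahler at all: the deck group $F$ acts on the $2$-sphere of parallel complex structures on $\tilde Y$, and for non-cyclic $F$ that action can have no fixed point, so that no compatible K\"ahler structure descends to $Y$, and \c{S}uvaina's classification of Ricci-flat ALE \emph{K\"ahler} surfaces cannot be applied verbatim. This is exactly where Wright's analysis \cite{wright} carries the weight, ruling out such non-K\"ahler quotients (equivalently, showing $F$ must be cyclic and $\Gamma$ must lie in $U(2)$), after which \c{S}uvaina's identification goes through. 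A secondary item to verify carefully --- standard by now, but needed already in the first step --- is that every hyper-K\"ahler ALE $4$-manifold genuinely is one of Kronheimer's, which is what legitimizes the appeal to \cite{krontor}.
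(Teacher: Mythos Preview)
Your proposal is correct and follows essentially the same route as the paper: both reduce the proposition to the combined classification results of Wright \cite{wright} and \c{S}uvaina \cite{ioana2}. The paper's treatment is terser and slightly more direct --- it simply invokes Wright's theorem that every oriented anti-self-dual Ricci-flat ALE $4$-manifold is K\"ahler with respect to some orientation-compatible complex structure, and then reads off the list from \c{S}uvaina's classification of Ricci-flat K\"ahler ALE surfaces --- whereas you pass first through the universal cover and Kronheimer's Torelli theorem before bringing in Wright to handle the deck-group quotient. Your version has the virtue of making explicit exactly \emph{why} Wright's contribution is indispensable (the deck group need not \emph{a priori} fix any of the parallel complex structures on $\tilde Y$, so \c{S}uvaina's K\"ahler classification cannot be applied to $Y$ until one knows it is K\"ahler), a point the paper alludes to only in passing.
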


This of course  follows from the classification \cite{ioana2,wright} of oriented   Ricci-flat ALE $4$-manifolds  that are  {\sf anti-self-dual},  in the sense that  $W^+\equiv 0$. Here, one should especially take note of  
 Wright's striking theorem that 
any such space  is K\"ahler  \cite{wright}  with respect to some orientation-compatible complex structure.
This fact dovetails nicely with  our question, because  any singularity of {\sf type T} is therefore  modeled on $\CC^2/\Gamma$ for some finite group $\Gamma \subset U(2)$ 
that acts freely on the unit sphere $S^3\subset \CC^2$, and   any isolated singularity of an orbifold  K\"ahler surface 
is naturally   of   this form. 
 However,  notice that  our requirement  that $\CC^2/\Gamma$ must also be the tangent cone at infinity of an anti-self-dual 
  Ricci-flat ALE manifold  
 imposes further, less obvious  constraints.   
Indeed, in  the specific setting where  $X$ is an orbifold complex surface, singularities of  {\sf type T}  exactly coincide 
 \cite[Proposition 3.10]{kolshep} with the ones    algebraic geometers have traditionally called  singularities of   {\sf class T}.
From an algebro-geometric   perspective, the key property of these   singularities is that they  have
$\QQ$-Gorenstein local smoothings. However,  the   relevance  of this class of singularities  to our investigation 
instead primarily stems from  the following  differential-geometric observation:

\begin{prop} 
\label{once}
If $(X, g_\infty)$ is an admissible {orbifold limit}, then $X$ has only  singularities of {\sf  type T}.  
\end{prop}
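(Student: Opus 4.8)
The plan is to reduce the statement to a purely local one at a single orbifold point and then feed the output into the classification recorded in Proposition~\ref{revealed}. Fix $p\in\mathfrak S$, so that a punctured neighborhood of $p$ in $X$ is isometrically covered by a punctured neighborhood of the origin in $\RR^4/\Gamma$ for some finite $\Gamma<SO(4)$ acting freely on $S^3$, equipped with the orientation it inherits from $X$; the goal is to exhibit an oriented anti-self-dual Ricci-flat ALE manifold with tangent cone at infinity this oriented $\RR^4/\Gamma$. First I would invoke the bubbling structure theory of Anderson and Bando--Kasue--Nakajima recalled in the Introduction: for suitable $x_i\to p$ and scales $\lambda_i\to\infty$, the rescaled metrics $(M,\lambda_i^2 g_i,x_i)$ converge in the pointed Gromov--Hausdorff sense, and in $C^\infty_{\mathrm{loc}}$ away from finitely many points, to a non-flat complete Ricci-flat ALE space $(Y,\mathsf h)$ --- the outermost bubble at $p$ --- whose unique tangent cone at infinity is exactly $\RR^4/\Gamma$, carrying the orientation induced from the oriented manifolds $M$, which agrees with the one above. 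Since $(M,g_i)$ is \emph{admissible} and $Y$ is a Ricci-flat ALE space that bubbles off, Definition~\ref{ranger} forces $W^+\equiv 0$ on $Y$ for this orientation. If $Y$ happens to be smooth, this is precisely the assertion that the singularity $\RR^4/\Gamma$ at $p$ is of type~T, and Proposition~\ref{revealed} then pins $\Gamma$ down to one of the listed subgroups of $U(2)$.

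It remains to treat the case in which the outermost bubble $Y$ is itself an orbifold, which I would handle by induction on the depth of the bubble tree over $p$. By Wright's theorem \cite{wright} (in its orbifold form) the anti-self-dual Ricci-flat ALE space $Y$ is K\"ahler with respect to an orientation-compatible complex structure; hence $\Gamma<U(2)$, and $Y$ is a Ricci-flat K\"ahler ALE orbifold, i.e.\ a partial crepant resolution of $\CC^2/\Gamma$ whose interior singularities, having trivial local canonical sheaf, are Du Val, so that they are $A_k$--$D_k$--$E_k$ points. Each such point carries a strictly deeper bubble, which is again a Ricci-flat ALE space bubbling off from $(M,g_i)$ --- hence again anti-self-dual by admissibility --- and whose tree is shallower, so the inductive hypothesis applies to it. Resolving the Du Val singularities of $Y$ crepantly produces a smooth crepant resolution of $\CC^2/\Gamma$, which, by the ALE Calabi--Yau existence theory of Kronheimer \cite{krontor} and \c{S}uvaina \cite{ioana2} --- the same results underlying Proposition~\ref{revealed} --- carries a Ricci-flat K\"ahler ALE metric still asymptotic to $\CC^2/\Gamma$. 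Read with the original orientation, this is a smooth oriented anti-self-dual Ricci-flat ALE manifold with tangent cone $\RR^4/\Gamma$ at infinity, so $p$ is of type~T and Proposition~\ref{revealed} again applies.

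The main obstacle is this last step. One must be sure the hierarchy of scales at $p$ is organized so that the \emph{outermost} bubble really is asymptotic to $\RR^4/\Gamma_p$ itself --- rather than to a group occurring deeper in the tree --- so that the recursion genuinely decreases the depth; and one must be confident that Wright's K\"ahler theorem and the ALE Calabi--Yau existence results remain valid for the (not necessarily simply connected, class~T) orbifolds that occur here, so that "anti-self-dual Ricci-flat ALE orbifold asymptotic to $\RR^4/\Gamma$'' can always be upgraded to "anti-self-dual Ricci-flat ALE \emph{manifold} with the same cone at infinity.'' Finally, orientations must be bookkept throughout: it is the matching between the orientation $Y$ inherits from the smooth manifolds $M$ and the complex orientation produced by Wright that makes $W^+\equiv 0$, as in Definition~\ref{loner}, the correct conclusion rather than $W^-\equiv 0$.
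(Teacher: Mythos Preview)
Your overall architecture is sound, and in the smooth-bubble case it matches the paper exactly. The paper, however, handles the orbifold-bubble case differently and more cheaply: rather than analyzing the outermost bubble $Y$ on its own, it surgically replaces every orbifold point of $Y$ by the corresponding deeper bubble, iterates until the process terminates, and then invokes \cite[Theorem~3]{bando1} (together with the tree-of-K\"ahler-ALE unobstructedness noted in \S\ref{forest}, cf.\ \cite{ozu2}) to conclude that the resulting \emph{smooth} ALE manifold is diffeomorphic to one carrying a Ricci-flat K\"ahler ALE metric with the same cone $\RR^4/\Gamma$ at infinity. No orbifold version of Wright, and no algebro-geometric description of $Y$, is needed.

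Your alternative route has a genuine gap in the non-$SU(2)$ case. You assert that a Ricci-flat K\"ahler ALE orbifold asymptotic to $\CC^2/\Gamma$ is ``a partial crepant resolution of $\CC^2/\Gamma$ whose interior singularities, having trivial local canonical sheaf, are Du~Val.'' When $\Gamma\not\subset SU(2)$ --- i.e.\ for the cyclic groups $\frac{1}{\ell m^2}(1,\ell mn-1)$ in the second bullet of Proposition~\ref{revealed} --- the quotient $\CC^2/\Gamma$ admits \emph{no} crepant resolution at all, and the smooth Ricci-flat K\"ahler ALE model is a free $\ZZ_m$-quotient of an $A_{\ell m-1}$ space rather than any resolution; correspondingly $K_Y$ is only torsion, not trivial, so ``trivial local canonical sheaf $\Rightarrow$ Du~Val'' is not available. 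Your induction does tell you the interior singularities of $Y$ are of type~T, but type~T is strictly larger than Du~Val (it includes, e.g., $\frac{1}{4}(1,1)$), so the crepant-resolution step does not go through as written. Repairing this --- gluing in the smooth K\"ahler ALE model at each interior type-T point and arguing that the assembled manifold still carries a Ricci-flat K\"ahler ALE metric with the same asymptotic cone --- is precisely the content the paper outsources to \cite{bando1} and \cite{ozu2}. The orbifold-Wright step you flag as uncertain is likewise bypassed by the paper's approach.
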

\begin{proof} The  subtlety here  is that the ALE spaces that bubble off from a given sequence $(M, g_i) \to (X, g_\infty)$  might be orbifolds, rather than 
manifolds. However, when one surgically replaces each orbifold singularity of such an ALE space with the corresponding deeper bubble, and then iterates the 
process until it terminates, the resulting ALE manifold is then diffeomorphic to a Ricci-flat  ALE K\"ahler manifold \cite[Theorem 3]{bando1}. 
Thus, whenever an admissible  orbifold  limit  has a singularity modeled on $\CC^2/\Gamma$ for some 
$\Gamma < U(2)$, then  $\CC^2/\Gamma$ is also  the tangent cone at infinity of a {\sf non-singular}  Ricci-flat  ALE K\"ahler manifold.
\end{proof}

As an immediate consequence, we therefore have:

\begin{prop}
\label{twice}
Among compact K\"ahler-Einstein orbifolds $(X^4,g_\infty)$ that have only isolated singularities, 
there are infinitely many topological types  that 
 can never arise as limits of admissible sequences of Einstein $4$-manifolds. 
\end{prop}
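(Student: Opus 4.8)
The plan is to exploit Proposition~\ref{once}: since an admissible orbifold limit can only have singularities of type T, it suffices to produce infinitely many pairwise non-isomorphic compact K\"ahler--Einstein orbifolds with isolated singularities, each of which carries at least one singularity that is \emph{not} of type T.

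The construction I have in mind starts from $(\CP^1\times\CP^1, g_{FS}\oplus g_{FS})$, which is K\"ahler--Einstein of positive Einstein constant, and, for an integer $p\geq 5$, lets $\ZZ_p$ act by
\[
\big([z_0:z_1],[w_0:w_1]\big)\longmapsto \big([z_0:\zeta z_1],[w_0:\zeta w_1]\big),\qquad \zeta=e^{2\pi i/p}.
\]
This action is holomorphic and isometric, and its fixed-point set is exactly the four points $\{[1:0],[0:1]\}\times\{[1:0],[0:1]\}$; hence the quotient $X_p:=(\CP^1\times\CP^1)/\ZZ_p$ is a compact complex $4$-orbifold with precisely four isolated singular points, onto which the product metric descends as a K\"ahler--Einstein orbifold metric of the same Einstein constant. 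Writing the $\ZZ_p$-action in affine coordinates near each fixed point, one finds that two of the singular points are modeled on the $A_{p-1}$ singularity $\frac1p(1,p-1)$, while the other two are modeled on $\frac1p(1,1)$, each with its complex orientation --- which is the one induced from the global orientation of $X_p$.

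It then remains to check that, for $p\geq 5$, the singularity $\frac1p(1,1)$ is not of type T. By Proposition~\ref{revealed}, a singularity of type T is either an ADE quotient $\CC^2/\Gamma$ with $\Gamma<SU(2)$, or a cyclic singularity $\frac1{\ell m^2}(1,\ell m n-1)$ with $m\geq 2$ and $\gcd(m,n)=1$. Now $\frac1p(1,1)$ has cyclic local group $\ZZ_p$, so it cannot be one of the $D$- or $E$-type quotients, whose local groups are nonabelian; and an orientation-preserving isomorphism $\frac1p(1,1)\cong A_{p-1}=\frac1p(1,p-1)$ would require $p-1\equiv 1\pmod p$, i.e.\ $p\mid 2$. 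Finally, $\frac1p(1,1)\cong\frac1{\ell m^2}(1,\ell m n-1)$ would force $p=\ell m^2$ together with $\ell m n\equiv 2\pmod{\ell m^2}$; since $\ell m$ divides both $\ell m n$ and $\ell m^2$, this gives $\ell m\mid 2$, and, as $m\geq 2$, this holds only for $\ell=1$, $m=2$, i.e.\ $p=4$. Hence, for every $p\geq 5$, the orbifold $X_p$ carries a singularity that is not of type T, so by Proposition~\ref{once} it is not an admissible orbifold limit; and since the local orbifold group $\ZZ_p$ at these points is an invariant of the orbifold, the $X_p$ are pairwise non-isomorphic, representing infinitely many topological types.

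The only point calling for genuine care is the orientation bookkeeping, because reversing orientation turns $\frac1p(1,1)$ into $\frac1p(1,p-1)$, which \emph{is} of type T; one must therefore work throughout with the complex orientation that $X_p$ actually carries. The remaining ingredients --- that the $\ZZ_p$-action is holomorphic and isometric with isolated fixed points, that the metric descends, and the elementary congruence arithmetic --- are routine.
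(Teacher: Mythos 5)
Your proof is correct. It differs from the paper's primary argument only in the choice of family: the paper divides $(\CP_2, g_{FS})$ by the $\ZZ_p$-action generated by $\mathrm{diag}(1,\eta,\eta^{-1})$ for $p\geq 5$ prime, producing three singular points of which exactly one ($A_{p-1}$) is of type~T, whereas you divide $\CP_1\times\CP_1$ with its product metric by the diagonal rotation, producing four singular points, two of type $A_{p-1}$ and two of type $\frac1p(1,1)$. The paper in fact explicitly notes immediately after its proof that ``another family of examples that would have sufficed'' is exactly your $(\CP_1\times\CP_1)/\ZZ_k$ construction for $k\neq 2,4$. Your version is, if anything, slightly more general in that it does not require $p$ to be prime, and your congruence argument ruling out that $\frac1p(1,1)$ is of type~T for $p\geq 5$ (not ADE since the group is cyclic and $p\nmid 2$; not $\frac1{\ell m^2}(1,\ell m n-1)$ since that would force $\ell m\mid 2$, hence $p=4$) is clean and complete, including the observation that the criterion $b'\equiv b$ or $b'\equiv b^{-1}\pmod p$ collapses to the same condition here. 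The orientation caveat you flag is exactly the right thing to be careful about. Minor remark: $p=3$ also works (the excluded values are just $k=2,4$), so you could have written $p\geq 3$, $p\neq 4$, but taking $p\geq 5$ costs nothing for the purpose of producing infinitely many examples.
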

\begin{proof} Let $g_{FS}$ be the Fubini-Study metric on $\CP_2$, and, for any prime number $p\geq 5$,  consider the finite group of isometries of $(\CP_2, g_{FS})$
that is  generated by 
$$
\left[\begin{array}{ccc} 1 &  &  \\ & \eta &  \\ &  & \eta^{-1}\end{array}\right],
$$
where $\eta = e^{2\pi i /p}$. The orbifold $\CP_2/\ZZ_p$ then has exactly three singular points, namely the images of $[1:0:0]$, $[0:1:0]$, and $[0:0:1]$, 
and each of these three singular points has a neighborhood modeled on  a quotient $\CC^2/\ZZ_p$, where the generator of the given $\ZZ_p$ action  on $\CC^2$ takes the form 
$$
\left[\begin{array}{cc}\eta &  \\ & \eta^{-1}\end{array}\right],
\left[\begin{array}{cc}\eta  &  \\ & \eta^{2}\end{array}\right]^{-1},  \mbox{ or } 
\left[\begin{array}{cc}\eta  &  \\ & \eta^2\end{array}\right],
$$
respectively.
The first of these cyclic groups is a subgroup of $SU(2)$, and so gives rise to  a singularity of {\sf type T}, modeled on the tangent cone at infinity of
 a gravitational instanton of type
$A_{p-1}$. However, the other two singularities are not of {\sf type T}, because in both cases the relevant  cyclic group  is not contained in 
$SU(2)$, and  its order is not divisible by a square. These K\"ahler-Einstein orbifolds $(\CP_2 , g_{FS})/\ZZ_p$ can therefore  never be   limits
of admissible   sequences  of smooth Einstein manifolds. 
\end{proof} 

Indeed, this set of examples also leads to a stronger  conclusion: 

\begin{prop} Either there are  infinitely many topological types of compact K\"ahler-Einstein orbifolds $(X,g_\infty)$ with isolated singularities
that cannot  arise as Gromov-Hausdorff limits of smooth Einstein manifolds, 
 or else there are Ricci-flat ALE $4$-manifolds that  remain to be discovered. 
\end{prop}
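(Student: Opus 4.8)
\medskip
\noindent\textbf{Proof strategy.}
The plan is to play Proposition~\ref{twice} off against Definition~\ref{ranger}. Proposition~\ref{twice} already exhibits an infinite family of topological types --- the orbifolds $(\CP_2, g_{FS})/\ZZ_p$, $p \geq 5$ prime --- of compact K\"ahler--Einstein orbifolds with isolated singularities, none of which is the Gromov--Hausdorff limit of an \emph{admissible} sequence of smooth Einstein $4$-manifolds. So there are only two possibilities. Either all but finitely many of these orbifolds \emph{also} fail to arise as Gromov--Hausdorff limits of \emph{arbitrary} sequences of smooth Einstein $4$-manifolds --- in which case the first alternative of the statement holds --- or else, for infinitely many primes $p \geq 5$, there is a sequence $(M, g_i)$ of smooth Einstein $4$-manifolds with $(M, g_i) \to (\CP_2, g_{FS})/\ZZ_p$. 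The claim is that the second possibility forces the second alternative.

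So suppose $(M, g_i) \to (\CP_2, g_{FS})/\ZZ_p$ with $p \geq 5$ prime. Since Proposition~\ref{twice} forbids \emph{admissible} sequences converging to this orbifold, the sequence $(M, g_i)$ must be \emph{in}admissible, so by Definition~\ref{ranger} at least one Ricci-flat ALE space $(Y, \mathsf{h})$ bubbling off from $(M, g_i)$ satisfies $W^+ \not\equiv 0$ for the induced orientation. The first step is to arrange that $Y$ is a smooth $4$-manifold with tangent cone at infinity still under control, and for this I would use the bubble-tree reconstruction of Anderson and Bando exactly as in the proof of Proposition~\ref{once}. Two of the three singular points of $(\CP_2, g_{FS})/\ZZ_p$ have local model $\CC^2/\Gamma$ with $\Gamma = \frac1p(1,2)$, a group that is \emph{not} of type~T (as noted in the proof of Proposition~\ref{twice}); so if every smooth bubble in the sub-tree sitting over one of these points were anti-self-dual, then surgically reassembling that sub-tree --- using Wright's theorem that an anti-self-dual Ricci-flat ALE space is K\"ahler, together with \cite[Theorem~3]{bando1} --- would exhibit $\CC^2/\Gamma$ as the tangent cone at infinity of a \emph{non-singular} Ricci-flat ALE K\"ahler manifold, hence as a singularity of type~T, contradicting Proposition~\ref{twice}. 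Thus some smooth Ricci-flat ALE $4$-manifold $Y$ appearing in one of these sub-trees has $W^+ \not\equiv 0$, and, tracing its tangent cone back up the tree, one finds that this cone is a complex quotient singularity $\CC^2/\Gamma'$ for which neither $\Gamma'$ nor its orientation-reverse is of type~T.

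Granting this, it remains to show that $Y$ is not among the presently known Ricci-flat ALE $4$-manifolds. Since every known example is half-conformally-flat, and since $W^+ \not\equiv 0$, it suffices to show $W^- \not\equiv 0$ as well. If instead $W^- \equiv 0$, then $Y$ with the reversed orientation is an anti-self-dual Ricci-flat ALE manifold, hence K\"ahler by Wright's theorem, hence one of the spaces of Proposition~\ref{revealed}; so its tangent cone at infinity, computed with the reversed orientation, would be of type~T, contradicting the last sentence of the previous paragraph. In the simplest situation --- $Y$ the coarsest bubble over one of the two singular points --- this is completely explicit: $\CC^2/\Gamma' = \CC^2/\frac1p(1,2)$, its orientation-reverse is $\CC^2/\frac1p(1,p-2)$, and for $p \geq 5$ prime neither is of type~T, since neither cyclic group is conjugate into $SU(2)$ (that would force $p$ to divide $1+2 = 3$ or $1 + (p-2) = p-1$) and their common order $p$ is not divisible by a square, so neither is of the cyclic form $\frac{1}{\ell m^2}(1, \ell m n - 1)$ with $m \geq 2$. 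Hence $Y$ is not half-conformally-flat, so $Y$ is a genuinely new Ricci-flat ALE $4$-manifold, and the second alternative holds.

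The step I expect to be the main obstacle is the reduction carried out in the second paragraph --- running the bubble-tree argument honestly, and in particular tracking orientations all the way down to a smooth bubble. This is essential because type-T-ness is not a priori symmetric under orientation reversal --- for instance $\frac19(1,2)$ is of type~T while its reverse $\frac19(1,7)$ is not --- so the argument genuinely relies on the fact, special to the examples produced by Proposition~\ref{twice}, that the offending singularity $\frac1p(1,2)$ fails to be of type~T in \emph{both} orientations. Everything else is formal once the reconstruction of \cite{andprior, bando1} and Wright's theorem \cite{wright} are granted.
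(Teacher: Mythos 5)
Your main ideas match the paper's: work with the family $(\CP_2,g_{FS})/\ZZ_p$ for $p\geq 5$ prime, observe that the group $\frac1p(1,2)$ is not of type~T and remains so after orientation reversal (since $2\not\equiv\pm1\pmod p$ and $p$ has no square factor), and deduce that any smooth Einstein sequence converging to one of these orbifolds forces the existence of a Ricci-flat ALE space that is not half-conformally-flat. The key arithmetic check is exactly what the paper does.

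However, there is a genuine gap in your second paragraph, and you have in effect flagged it yourself. After finding a smooth bubble $Y$ with $W^+\not\equiv 0$, you assert that ``tracing its tangent cone back up the tree, one finds that this cone is a complex quotient singularity $\CC^2/\Gamma'$ for which neither $\Gamma'$ nor its orientation-reverse is of type T.'' This is unjustified and is in general false: a deeper bubble in the tree can have essentially arbitrary tangent cone, unrelated to the top-level group $\frac1p(1,2)$ --- for instance its cone group $\Gamma'$ could well be $\ZZ_2\subset SU(2)$, which certainly is of type~T. Nothing in the bubble-tree structure forces the cone of an individual deep bubble to be non--type-T just because the top-level cone is, so the subsequent argument that $W^-\not\equiv 0$ for that particular $Y$ collapses. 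The paper avoids this entirely by never trying to locate a specific bad bubble: it instead assumes for contradiction that all Ricci-flat ALE $4$-manifolds are known (hence half-conformally-flat, hence K\"ahler in one orientation or the other by Wright), surgically reassembles the entire tree via \cite[Theorem~3]{bando1} into a single Ricci-flat ALE \emph{manifold} whose tangent cone at infinity is $\CC^2/\frac1p(1,2)$, and then notes that this reassembled manifold --- which by the assumption must itself be known, hence half-conformally-flat --- would force $\frac1p(1,2)$ or its reverse to be of type~T, contradicting the computation. In short, the ``undiscovered'' instanton in the paper's argument is the reassembled ALE manifold, not one of the individual bubbles, and once you shift to that viewpoint the orientation-bookkeeping you were worried about disappears.
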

\begin{proof} If we assume that we already know all  possible  Ricci-flat ALE \linebreak $4$-manifolds, 
the only additional  freedom allowed beyond the  {\sf type T} scenario is that some of the Ricci-flat 
K\"ahler gravitational instantons might arise  with the non-standard orientation, thus 
allowing them to have $W^-=0$ instead of $W^+=0$. However, for any prime $p$, this would still only allow
$\ZZ_p\subset \mathbf{U}(2)$ to arise in connection to an orbifold singularity $\CC^2/\ZZ_p$ if the 
$\ZZ_p$ action were generated  by 
$$\left[\begin{array}{cc}\eta  &  \\ & \eta^{-1}\end{array}\right]  \mbox{ or } 
\left[\begin{array}{cc}\eta  &  \\ & \eta\end{array}\right]
$$
for some $p^{\rm th}$ root of unity $\eta$. 
However, for any prime  $p\geq 5$, two of the singularities of  the  isometric quotients $\CP_2/\ZZ_p$ discussed above are certainly not of this form. 
Thus, these examples cannot be Gromov-Hausdorff limits of sequences of smooth Einstein $4$-manifolds unless there are undiscovered ALE gravitational 
instantons that are not half-conformally-flat. 
 \end{proof}

Of course, the examples displayed  in the proof of Proposition \ref{twice}  are just the tip of the iceberg. Even among cyclic quotients of 
$\CP_2$, one could either replace the prime $p$ with an arbitrary product of distinct primes, or  change the specific powers of the root of unity 
$\eta$  in the matrix that  generates the action. Another   family of examples that would have sufficed  to prove Proposition 
\ref{twice} can be obtained by 
instead dividing the product Einstein metric on $\CP_1\times \CP_1$ by the action of a cyclic group $\ZZ_k$,  $k\neq 2, 4$, acting on both $S^2$ factors by identical rotations
about an axis; 
these examples have four orbifold singularities, but only two of them  are  of {\sf type T}. 
For an   infinite  class of non-locally-symmetric  K\"ahler-Einstein examples  with 
 singularities that are not of {\sf type T}, see \cite{bgk,collszek,kol}. 
For non-K\"ahler examples   of  compact  $4$-dimensional   Hermitian Einstein orbifolds which  have  singularities that are not of {\sf  type T},  see  \cite{apogau,bg,bryboc,gallaw}.

\section{Hermitian, Einstein Orbifolds} 
\label{heo}

While Theorem \ref{alpha} just concerns  the  case  where  $(X,g_\infty)$ is a K\"ahler-Einstein orbifold with $\lambda > 0$, 
 Theorem \ref{beta} weakens this hypothesis by merely requiring that the $\lambda > 0$ Einstein orbifold $(X,g_\infty)$  be  Hermitian. Let us therefore  point out  that this  weaker condition can be restated in various equivalent ways.

\begin{thm} 
\label{hemhaw}
A compact  oriented  
simply-connected 
$\lambda > 0$ Einstein  orbifold $(X^4,g)$ 
is {\em Hermitian}, in the sense that  $g=g (J\cdot , J\cdot )$ for some  integrable orientation-compatible  orbifold complex structure $J$ on $X$, 
if and only if  one of the following equivalent conditions holds:

\begin{enumerate}[\rm (i)]
\item \label{uno} The Einstein metric $g$ is {\em conformally K\"ahler},  meaning that   there is a smooth positive function 
$u: X\to \RR^+$ such that $\check{g}= u^2 g$ is 
 a K\"ahler metric with respect to  some  orientation-compatible integrable orbifold complex structure $J$ on $X$. 
\item \label{dos}  The self-dual Weyl curvature $W^+: \Lambda^+ \to \Lambda^+$ of $g$ has two equal, negative eigenvalues at almost every point
of $X$.
\item There is a  global self-dual harmonic $2$-form $\omega$ on the compact orbifold $(X,g)$ such that $W^+(\omega, \omega ) > 0$ at every
 point of $X$. 
\label{tres}
\item The self-dual Weyl curvature $W^+: \Lambda^+\to \Lambda^+$ satisfies $\det (W^+) > 0$ at every point of $X$. 
\label{quatro}  \end{enumerate}
\end{thm}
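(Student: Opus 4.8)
The plan is to establish the chain of implications
\[
\text{Hermitian} \Longrightarrow (\ref{uno}) \Longrightarrow (\ref{dos}) \Longrightarrow (\ref{quatro}) \Longrightarrow (\ref{uno}), \qquad (\ref{uno}) \Longrightarrow (\ref{tres}) \Longrightarrow (\ref{uno}),
\]
together with the trivial remark that $(\ref{uno})$ forces $g$ to be Hermitian, since a conformal change of metric preserves compatibility with a given orbifold complex structure $J$. All notions below — K\"ahler metric, harmonic form, Hodge theory — are taken in the orbifold sense; because the singular set is isolated and $g$ is smooth in every orbifold uniformizing chart, each argument is either pointwise and algebraic, or is a Weitzenb\"ock-type integral identity or a maximum-principle estimate that passes to the finite-quotient charts unchanged. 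Consequently the classical arguments of Derdzinski and of LeBrun for smooth compact manifolds carry over essentially verbatim; verifying this orbifold transcription — in particular that the relevant eigenbundles of $W^+$ and the conformally rescaled K\"ahler metrics extend across the singular points — is the one genuinely new ingredient.

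For Hermitian $\Rightarrow(\ref{uno})$ one invokes Derdzinski's theorem: a Hermitian Einstein $4$-orbifold is either already K\"ahler, in which case $(\ref{uno})$ holds with $u\equiv 1$, or else not K\"ahler, in which case $\check g = |W^+|_g^{2/3}\, g$ is K\"ahler with respect to a globally defined complex structure agreeing with $J$ up to sign. (In the non-K\"ahler case $W^+$ is not identically zero, so that $\check g$ is a genuine metric; for a compact orbifold with $\lambda>0$ one moreover has $W^+\not\equiv 0$ in all cases, since an anti-self-dual positive Einstein orbifold admits no orientation-compatible almost-complex structure.) Here one uses that $X$ is oriented and simply connected in order to orient the top eigenline of $W^+$ and hence globalize $J$. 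For $(\ref{uno})\Rightarrow(\ref{dos})$, recall that on a K\"ahler surface the self-dual Weyl endomorphism has eigenvalues $\big(\tfrac{\check s}{6},-\tfrac{\check s}{12},-\tfrac{\check s}{12}\big)$, with the K\"ahler form spanning the first eigenspace. Since $W^+$ is conformally invariant as a $(1,3)$-tensor, the pattern ``one simple eigenvalue, one double eigenvalue of the opposite sign,'' as well as the signs of the eigenvalues, are conformal invariants; hence $W^+_g$ has a double eigenvalue, negative wherever $\check s>0$. That $\check s>0$ everywhere on $X$ is the standard positivity statement for conformally K\"ahler Einstein metrics with $\lambda>0$, obtained by applying the maximum principle to the elliptic equation satisfied by the conformal factor. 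Finally $(\ref{dos})\Rightarrow(\ref{quatro})$ is linear algebra: $W^+$ is trace-free, so if two eigenvalues are equal and negative the third is positive, whence $\det W^+$, their product, is positive.

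The step $(\ref{quatro})\Rightarrow(\ref{uno})$ is the first substantive analytic input, namely LeBrun's theorem that a compact oriented Einstein $4$-manifold with $\det W^+>0$ everywhere is conformally K\"ahler: positivity of the determinant together with trace-freeness makes the largest eigenvalue of $W^+$ simple and positive, so its normalized eigenform is an orientation-compatible almost-complex structure $J$ (globally defined since $X$ is simply connected), and a Bochner argument exploiting the Einstein identity $\delta W^+=0$ shows that the rescaling of $g$ built from $|W^+|$ is $J$-K\"ahler. For $(\ref{uno})\Rightarrow(\ref{tres})$, take $\omega$ to be the K\"ahler form of $\check g$: it is $g$-self-dual because the Hodge star on middle-degree forms in dimension four is conformally invariant, it is $g$-harmonic because it is $\check g$-parallel, hence closed, hence $g$-harmonic being also $g$-self-dual, and $W^+_g(\omega,\omega)$ has the same sign as $W^+_{\check g}(\omega,\omega)=\tfrac{\check s}{6}|\omega|^2_{\check g}>0$. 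Lastly, $(\ref{tres})\Rightarrow(\ref{uno})$ is the second analytic input, again due to LeBrun: given a self-dual harmonic $2$-form $\omega$ with $W^+(\omega,\omega)>0$ everywhere on a compact Einstein $4$-manifold, the Weitzenb\"ock identity $0=\nabla^*\nabla\omega-2W^+(\omega)+\tfrac{s}{3}\omega$ together with the maximum principle forces $\omega$ to be nowhere zero with $|\omega|$ controlled by the eigenvalues of $W^+$, and an explicit conformal rescaling of $g$ in terms of $|\omega|$ is then K\"ahler with K\"ahler form a positive multiple of $\omega$; this closes the cycle. I expect the two implications landing on $(\ref{uno})$ — where all the real PDE work resides — to be the crux, with the orbifold bookkeeping described above the only point requiring attention beyond citing the known manifold results.
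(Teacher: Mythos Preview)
Your chain of implications and the results you invoke (Derdzi\'nski for the degenerate-$W^+$ characterization, LeBrun's Weitzenb\"ock arguments from \cite{lebcake} and \cite{lebdet} for the two hard implications landing on (\ref{uno})) are exactly what the paper uses, and your remark that the orbifold transcription is the only new bookkeeping matches the paper's treatment. Two points deserve comment, however, because the paper handles them differently and your justifications are not quite right as stated.

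First, your claim that $\check s>0$ everywhere follows ``by applying the maximum principle to the elliptic equation satisfied by the conformal factor'' does not work directly: the Yamabe-type equation $6\Delta_g u+s_g u=\check s\,u^3$ only yields $\check s>0$ at the maximum of $u$, not globally. What one actually needs is Derdzi\'nski's dichotomy (either $W^+\equiv 0$ or $W^+$ has \emph{exactly} two eigenvalues at every point), which forces $\check s$ to have constant sign, together with an integrated Weitzenb\"ock identity on the K\"ahler form (or Derdzi\'nski's explicit normalization $\check s=u>0$) to fix that sign. The paper packages this into the single citation ``the Riemannian Goldberg--Sachs theorem \cite{lebhem} implies that it either satisfies condition (\ref{dos}), or else satisfies $W^+\equiv 0$.''

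Second, to exclude $W^+\equiv 0$ you assert that ``an anti-self-dual positive Einstein orbifold admits no orientation-compatible almost-complex structure.'' For smooth manifolds this follows from Hitchin's classification, but for orbifolds it is not obvious. The paper instead invokes the orbifold version of Boyer's theorem \cite{boy2}: an ASD Hermitian surface is globally conformal to a scalar-flat K\"ahler metric, which is impossible in a conformal class of positive Yamabe constant. This route avoids any orbifold Hitchin-type statement and is the safer argument here.
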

\begin{proof}
If the Einstein orbifold $(X,g)$  satisfies condition  \eqref{uno}, 
the Riemannian Goldberg-Sachs theorem  implies \cite{lebhem}  that it either  
satisfies condition \eqref{dos}, or else satisfies  $W^+\equiv 0$. However,  the  compactness of $X$ and the positivity of the 
Ricci curvature of $g$ together exclude  the latter possibility,
because 
the orbifold generalization of \cite[Theorem 1]{boy2}   and the fact that $b_1(X)=0$ 
would  then  imply  that $g$ is  globally conformal to a scalar-flat K\"ahler orbifold metric, 
contradicting the assumption that $g$ has positive scalar curvature.

The characterization \eqref{dos}
of conformally K\"ahler, Einstein metrics is due to Derdzi\'{n}ski \cite{derd}, and is strictly 
local; the proof moreover  shows that if this eigenvalue condition holds on a dense set, it 
actually holds everywhere. When this   happens, the K\"ahler metric $\check{g}$ is actually
{\em extremal} in the sense of Calabi; and, after possibly replacing it with a constant multiple, 
 one can arrange for the   scalar curvature $s_{\check{g}}$ of this extremal K\"ahler metric  to exactly equal  the positive
function $u$. 
 
If criterion \eqref{tres} is satisfied, then, since the orbifold $X$ is assumed to be compact, the   Weitzenb\"ock-formula  proof of  \cite[Theorem 1]{lebcake}  works exactly as in the manifold case. This then    shows that the conformally related metric 
for which  $|\omega|\equiv \sqrt{2}$ must actually be K\"ahler. 

Criterion \eqref{quatro}   was first proposed by
Peng Wu \cite{pengwu}. If $\alpha : X\to \RR$ is the positive eigenvalue of $W^+$, the proof in \cite{lebdet} shows that $h= \alpha^{2/3}g$
is K\"ahler, using a Wetizenb\"ock argument that works just as well on a compact  orbifold as it does on a compact manifold. 
\end{proof}

\section{Constraints on Diffeotype} 

Next, we  completely classify the smooth closed $4$-manifolds that   carry admissible  sequences of Einstein metrics for which the  limit   is 
conformally K\"ahler. 

\begin{thm}
\label{upon} 
Suppose that $M$ is  a smooth compact oriented $4$-manifold that admits   an admissible  sequence  $g_i$ of  
Einstein metrics   such that 
 $\{ (M, g_i)\}$  Gromov-Hausdorff  converges  to 
  a Hermitian,    Einstein   orbifold $(X^4,g_\infty)$ with Einstein constant $\lambda > 0$. 
Then $M$ is necessarily diffeomorphic to a Del Pezzo surface. In other words,  $M$ is diffeomorphic    either   to $S^2\times S^2$
or to 
a connected sum $\CP_2 \# \ell \overline{\CP}_2$ for some 
 $\ell\in \{ 0,1, \ldots , 8\}$.
In particular, 
$\pi_1(M)=0$, $b_+(M) =1$, and $b_-(M) \leq 8$. 
\end{thm}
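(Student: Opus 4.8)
The plan is to extract topological information about $M$ from the combination of (a) the sign of the signature, forced by the conformally-Kähler geometry of the limit, and (b) the Einstein/Hitchin–Thorpe inequality applied in the orbifold-with-bubbles setting. First I would observe that the hypotheses of Theorem~\ref{upon} put us in the setting of Theorem~\ref{hemhaw}: the limit $(X,g_\infty)$ is a Hermitian, $\lambda>0$ Einstein orbifold, so after passing to its universal orbifold cover (which is compact, since $\lambda>0$ forces finite orbifold fundamental group by Myers) we may assume $X$ is simply connected and hence that $g_\infty$ is conformally Kähler with respect to some integrable $J$, with $W^+$ of the Derdziński type $(\alpha,\alpha,-2\alpha)$, $\alpha>0$, almost everywhere. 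In particular $W^+\not\equiv 0$ on $X$, and more importantly $b_+(X)\geq 1$ with a distinguished self-dual harmonic form. The reduction to the simply-connected case also needs to be propagated back down to $M$: since every bubble is anti-self-dual Ricci-flat ALE (admissibility) and these are classified, the surgery reconstruction of $M$ from $X$ only attaches spaces with trivial or cyclic $\pi_1^{orb}$-type data that cannot create new fundamental group once one knows $X$ is simply connected — this is where one invokes Proposition~\ref{once} and the explicit list in Proposition~\ref{revealed}.

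Next I would assemble the curvature integrals. On each smooth $(M,g_i)$ one has the Gauss–Bonnet and signature formulas
\[
\chi(M)=\frac{1}{8\pi^2}\int_M\Big(\tfrac{s^2}{24}+|W^+|^2+|W^-|^2-\tfrac{|\ric|^2}{2}\Big)\,dV,\qquad
\tau(M)=\frac{1}{12\pi^2}\int_M\big(|W^+|^2-|W^-|^2\big)\,dV,
\]
and since $g_i$ is Einstein the $|\ric|^2$ term drops. Passing to the Gromov–Hausdorff limit, these integrals split as the corresponding orbifold integrals over $(X,g_\infty)$ plus the sum of the contributions from the bubbles $(Y_a,\mathsf h_a)$ — this is the Anderson/Bando–Kasue–Nakajima accounting. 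Since every bubble is anti-self-dual and Ricci-flat, it contributes $\int_{Y_a}|W^-|^2$ (a nonnegative number, in fact a positive multiple of $b_-$ of the bubble's $e$-invariant) to $\chi$ and $-\tfrac{1}{12\pi^2}\int_{Y_a}|W^-|^2$ to $\tau$, but contributes nothing to the $|W^+|^2$ and $s^2$ pieces. The key point is then: on the conformally-Kähler Einstein orbifold $(X,g_\infty)$ one has the sharp pinching $\int_X|W^+|^2\,dV \geq \frac{1}{3}\cdot\frac{1}{8\pi^2}\cdot(\text{something positive})$ — more precisely the Gray/LeBrun-type identity that for a conformally-Kähler Einstein metric the self-dual Weyl energy dominates the scalar energy, giving $2\chi(X)+3\tau(X)>0$ in the orbifold sense — and combining this with the bubble contributions (which only help the sign of $2\chi+3\tau$ for $M$, since each bubble adds a positive amount to $2\chi$ and subtracts from $3\tau$ an amount at most... — here one must be careful, as bubbles with $W^-\neq 0$ contribute $+2\int|W^-|^2$ to $2\chi$ and $-3\cdot\frac{1}{12\pi^2}\int|W^-|^2$ to $3\tau$, net $+\tfrac{1}{2}\int|W^-|^2 >0$ to $2\chi+3\tau$) yields $2\chi(M)+3\tau(M)>0$, i.e. $(2\chi+3\tau)(M)>0$. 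By the same Seiberg–Witten–style classification of $4$-manifolds — or directly, since $M$ is simply connected with $b_+=1$ (from the distinguished self-dual harmonic form surviving to $M$, again via the bubble-accounting that $b_+$ is preserved under the anti-self-dual surgeries) — one gets $M\approx \CP_2\#\ell\overline{\CP}_2$ or $S^2\times S^2$ with $2\chi+3\tau = 9-\ell>0$, forcing $\ell\leq 8$; and $S^2\times S^2$ has $2\chi+3\tau=8>0$, which is consistent. Finally $b_+(M)=1$ excludes all but the Del Pezzo diffeotypes.

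The step I expect to be the main obstacle is pinning down $b_+(M)=1$ (equivalently, that exactly one self-dual harmonic form, no more, survives the limit) and the precise bookkeeping that the anti-self-dual bubbles neither create fundamental group nor change $b_+$. That $b_+(X)\geq 1$ is immediate from Theorem~\ref{hemhaw}(iii); the content is the \emph{upper} bound $b_+(M)\leq 1$, which should follow because a conformally-Kähler Einstein metric on a compact orbifold has $W^+$ with a simple positive eigenvalue of multiplicity one almost everywhere, hence by the Weitzenböck/Bochner argument behind Theorem~\ref{hemhaw} its space of self-dual harmonic forms is one-dimensional — and this dimension is lower-semicontinuous under Gromov–Hausdorff convergence with anti-self-dual bubbles, which gives $b_+(M)\leq b_+(X)\leq 1$. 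Combined with $b_+(M)\geq 1$ (the limiting form is nonzero because $\int_X W^+(\omega,\omega)>0$ is scale-invariant and survives), we conclude $b_+(M)=1$, and then simple-connectedness plus $(2\chi+3\tau)(M)>0$ with $b_+=1$ forces the Del Pezzo list. A subtlety worth flagging is that one must rule out $b_+=1$, $b_-\geq 9$ cases like $\CP_2\#9\overline{\CP}_2$: here $2\chi+3\tau=0$, not $>0$, so the \emph{strict} inequality (which comes from $W^+\not\equiv 0$ on a set of positive measure, genuinely strict by Derdziński's analysis) is exactly what is needed.
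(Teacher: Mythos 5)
Your approach is genuinely different from the paper's, and unfortunately it has a fatal gap at the very last step: you cannot conclude the \emph{diffeomorphism} type of $M$ from $\pi_1(M)=0$, $b_+(M)=1$, and $(2\chi+3\tau)(M)>0$ alone. Those invariants determine the \emph{homeomorphism} type via Freedman, but there are (by Donaldson, Fintushel--Stern, etc.) infinitely many exotic smooth structures on $\CP_2\#\ell\overline{\CP}_2$ for $\ell\geq 2$, all with the same $\pi_1$, $b_\pm$, and $2\chi+3\tau$. Nothing in your argument rules these out. The paper's proof overcomes exactly this obstacle in a way you have not reproduced: it first shows, via a quantitative Moser-stability argument for the ALE Kähler asymptotics, that the orbifold Kähler form on $(X,J,g_\infty)$ extends across the bubbles to a genuine symplectic form $\omega$ on $M$. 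Once $M$ admits both a symplectic form and a positive-scalar-curvature metric, the Seiberg--Witten classification of Liu and Ohta--Ono forces $M$ to be symplectomorphic to a rational or ruled surface, and this pins down the \emph{diffeotype}. Hitchin--Thorpe then cuts the list to the Del Pezzo surfaces. The symplectic-form construction is thus not an optional alternative route but the essential ingredient your proposal is missing.

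A secondary arithmetic point: your bookkeeping of the bubble contribution to $2\chi+3\tau$ is off. An anti-self-dual Ricci-flat ALE bubble contributes $\frac{1}{8\pi^2}\int|W^-|^2$ to $\chi$ and $-\frac{1}{12\pi^2}\int|W^-|^2$ to $\tau$ (once the boundary correction terms are matched against the orbifold-singularity defects, which is exactly what Proposition~\ref{jobs} encodes), so its contribution to $2\chi+3\tau$ is $\frac{2}{8\pi^2}\int|W^-|^2-\frac{3}{12\pi^2}\int|W^-|^2=0$, not a strictly positive quantity as you claimed. This does not affect your conclusion that $(2\chi+3\tau)(M)>0$ --- which then comes entirely from the conformally Kähler, $\lambda>0$ orbifold and equals $c_1^2(X)>0$ --- but the reasoning as written is incorrect. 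Finally, the $b_+(M)\leq b_+(X)$ semicontinuity you flag as uncertain is in fact a genuine bit of bookkeeping (via the Mayer--Vietoris splitting and the negative-definiteness of the compactly supported $L^2$-cohomology of ASD ALE spaces); but even granting it, the argument as a whole still only reaches the homeotype, so the gap described above remains the decisive one.
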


\begin{proof}
As we previously saw in the proof of Proposition \ref{once}, $M$ must be diffeomorphic to the manifold obtained by replacing each orbifold singularity of $X$ with a Ricci-flat K\"ahler 
ALE manifold. Each such replacement is implemented  \cite{andprior,bando1,nakajima} by cutting out a small distance ball around a singular point $p_k\in X$, 
thereby introducing a boundary  modeled on a spherical space-form $S^3/\Gamma_k$, and then 
gluing in a  truncation of  a  Ricci-flat anti-self-dual ALE  manifold  $(Y_k,\mathsf{h}^{(k)})$ for which a neighborhood of infinity is modeled on $(S^3/\Gamma_k) \times \RR^+$. 
If $\Gamma_k \subset SU(2)$,  so that  $(Y_k,\mathsf{h}^{(k)})$ is hyper-K\"ahler, choose 
 a compatible integrable  almost-complex structure $J^{(k)}$ which is asymptotic to the obvious one on $\CC^2/\Gamma_k$; otherwise, 
 when $\Gamma_k \subset U(2)$ but $Y_k$ is not simply connected, there is still an integrable almost-complex structure $J^{(k)}$ that is 
 analogously asymptotic to the obvious choice. Because $\mathsf{h}^{(k)}$ is ALE of order $4$, we thus have \cite{bakanaka,lebkmass} 
$$\mathsf{h}^{(k)}_{ab} = \delta _{ab} + O(\varrho^{-4}), \qquad h^{(k)}_{ab,c} = O(\varrho^{-5}),$$
and
$$
\omega^{(k)}_{ab} = \omega^0_{ab} +  O(\varrho^{-4}), \qquad \omega^{(k)}_{ab,c} = O(\varrho^{-5}),
$$
where $\varrho$ is the usual Euclidean radius, $\omega^{(k)}$ is the K\"ahler form of $(Y_k, \mathsf{h}^{(k)}, J^{(k)})$  and  $\omega^0= dx^1\wedge dy^1 + dx^2 \wedge dy^2$ is the standard orbifold symplectic structure on $\CC^2/\Gamma_k$. 
A quantitative refinement of the standard Moser stability argument then shows \cite[Proposition 1.1]{lebkmass} that, for each $k$,    there is 
 a diffeomorphism $\Psi=\Psi^{(k)}$ defined on some asymptotic 
region $\varrho > {A}$ with 
$$\Psi^*\omega^{(k)} =\omega^0  \qquad \text{and} \qquad \Psi  =  \text{identity} + O(\varrho^{-3}).$$ 
 Thus, for a large constant $C\gg 0$ and a suitably small $\varepsilon > 0$, 
the composition of $\Psi$  with  multiplication by $C/\varepsilon$ defines 
 a symplectomorphism  between  the tiny annular region 
$\varepsilon < \varrho < 2 \varepsilon$ in $(\CC^2 /\Gamma_k, \omega_0)$ and the annular region in  
  $(Y_k, \varepsilon^2 C^{-2}  \omega^{(k)})$ that is the image under $\Psi$ of the annulus $C< \varrho < 2C$.  
 Identifying  the domain of this map with a tiny annulus about the orbifold singularity $p_k\in X$ then shows that 
  the K\"ahler form on $(X,g_\infty)$ can be extended across  $Y_k$ as a symplectic form. Since  the map
  used here to attach $Y_k$  to the regular part of $X$  is moreover isotopic to the one used to reconstruct $M$ from $X$ and the  $Y_k$, 
  applying this same procedure
 to all of the orbifold singularities of $X$ therefore  constructs a symplectic form $\omega$ on a manifold diffeomorphic to $M$. 
 
 It follows  that the smooth compact $4$-manifold $M$ admits both a symplectic form $\omega$ and an Einstein metric $g_i$ of positive Einstein constant. 
 This implies \cite{chenlebweb,lebcam} that $M$ must be diffeomorphic to a Del Pezzo surface. Indeed, since $M$ admits both a symplectic form and 
 a metric of positive scalar curvature, 
 Seiberg-Witten theory implies that it contains a pseudoholomorphic $2$-sphere of non-negative self-intersection, and so 
 must be symplectomorphic  to a rational or ruled complex surface \cite{liu1,ohno}. The fact that $M$  must    also \cite{hit}  satisfy the strict 
 Hitchin-Thorpe inequality $(2\chi + 3\tau )(M) = c_1^2 (M) >0$ then narrows the list of possibilities down to  the Del Pezzo diffeotypes. 
\end{proof}

This result is nominally sharp, since 
  every Del Pezzo $4$-manifold $M$  actually does 
  admit \cite{chenlebweb} conformally K\"ahler, positive-$\lambda$ Einstein metrics $g$, and therefore 
admits admissible {\sf constant} 
sequences $g_i = g$ with   limit  $(M,g)$.  However, this tautological construction of course   never yields sequences that 
converge to  singular orbifolds. In fact,  we will eventually see that  demanding that $X$ have at least one
 singular point actually winnows  the list down 
 to the four  diffeotypes  $\CP_2 \# \ell \overline{\CP}_2$,  $5\leq \ell \leq 8$. 
 
 We have already seen that most K\"ahler-Einstein orbifold complex surfaces certainly cannot arise as limits of admissible sequences
 of Einstein manifolds.  This is partly explained by the fact that  the first Chern class $c_1= c_1^{\rm orb}$ of such an orbifold $X$ 
is usually only  defined as an element of $H^2(X, \QQ )$, and that its self-intersection $c_1^2 (X)$ is therefore  in general    only a rational number.  
When the orbifold is the limit of an admissible sequence, however, the situation is drastically simpler.

 \begin{prop}
 \label{jobs}
  If  a Hermitian, $\lambda > 0$ Einstein orbifold $(X^4,J, g_\infty)$ is the Gromov-Hausdorff limit of an admissible sequence $(M,g_i)$
 of smooth Einstein $4$-manifolds $(M,g_i)$, then 
 $c_1^2(X,J) = (2\chi+ 3\tau )(M) \in \{ 1,2, 3, 4, \ldots , 9\}$. 
 \end{prop}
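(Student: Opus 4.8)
The plan is to compute $c_1^2$ of the orbifold $X$ topologically, using the diffeomorphism, established in Theorem~\ref{upon}, between $M$ and the manifold obtained by surgically replacing each orbifold singularity $p_k \in X$ with a non-singular Ricci-flat ALE K\"ahler manifold $(Y_k, \mathsf{h}^{(k)}, J^{(k)})$. The key observation is that all the relevant characteristic-number contributions behave additively under this cut-and-paste operation, once one keeps careful track of the orbifold corrections at the singular points. Concretely, I would first recall the orbifold Gauss-Bonnet and signature formulas, writing $(2\chi + 3\tau)(X)$ as the integral $\frac{1}{8\pi^2}\int_X (\cdots)$ of a curvature expression plus a sum of local correction terms $\sum_k c(\Gamma_k)$ depending only on the groups $\Gamma_k < U(2)$ at the singular points; and then, since $(X,J)$ is a \emph{complex} orbifold, identify the left-hand side with the orbifold first Chern number $c_1^2(X,J) = (2\chi + 3\tau)^{\mathrm{orb}}(X)$, which is a priori only rational.

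Next I would analyze what happens on the manifold side. By Theorem~\ref{upon}, $M = (X \setminus \bigcup_k B_k) \cup \bigcup_k Y_k'$ where $Y_k'$ is a truncation of $Y_k$ with boundary $S^3/\Gamma_k$. Applying the smooth Gauss-Bonnet and signature theorems with boundary (or, more cleanly, the additivity of $\chi$ and the Novikov additivity of $\tau$ under gluing along rational homology spheres, together with the $\eta$- and defect-invariant bookkeeping of the $S^3/\Gamma_k$), one gets $(2\chi + 3\tau)(M) = c_1^2(X,J) + \sum_k \big[ (2\chi+3\tau)\text{-contribution of } Y_k - c(\Gamma_k)\big]$. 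The crucial input is then that each $Y_k$ is an \emph{anti-self-dual} Ricci-flat ALE K\"ahler manifold with tangent cone $\CC^2/\Gamma_k$ at infinity; for such spaces the relevant combination of topological invariants (equivalently, the ALE mass, via the mass-type formulas of \cite{lebkmass,bakanaka}) exactly cancels the orbifold correction $c(\Gamma_k)$. In the simply-connected ($\Gamma_k \subset SU(2)$) case this is the familiar statement that gluing in an ALE gravitational instanton of type $A$-$D$-$E$ adds nothing to $c_1^2$; for the general type~T case one uses Proposition~\ref{revealed} together with \c{S}uvaina's description of $Y_k$ as a free $\ZZ_{m_k}$-quotient of an $A$-type space, so the same cancellation persists. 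Hence $c_1^2(X,J) = (2\chi + 3\tau)(M)$, and in particular this quantity is an \emph{integer}.

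Finally, to pin down the range $\{1,2,\dots,9\}$, I would invoke Theorem~\ref{upon} directly: it asserts $M$ is diffeomorphic to $S^2 \times S^2$ or to $\CP_2 \# \ell \overline{\CP}_2$, $0 \le \ell \le 8$. For these Del Pezzo diffeotypes one has $(2\chi + 3\tau)(M) = c_1^2 = 8$ (for $S^2\times S^2$) or $9 - \ell \in \{1,\dots,9\}$, so the integer $c_1^2(X,J) = (2\chi+3\tau)(M)$ necessarily lies in $\{1,2,\dots,9\}$, completing the proof.

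\medskip\noindent
\textbf{Main obstacle.} The delicate point is the bookkeeping at the singularities: one must verify that the boundary $\eta$-invariant / signature-defect contributions of $S^3/\Gamma_k$, the Euler-characteristic correction terms in orbifold Gauss-Bonnet, and the corresponding invariants of the glued-in ALE piece $Y_k$ all combine so that the \emph{net} change in $2\chi + 3\tau$ under each surgery is zero. The anti-self-duality of $Y_k$ (hence its K\"ahler structure, by Wright's theorem) and the explicit classification in Proposition~\ref{revealed} are exactly what make this cancellation work; without the admissibility hypothesis, an ALE bubble with $W^+ \not\equiv 0$ could in principle contribute a nonzero defect, and integrality of $c_1^2$ would fail. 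I would therefore expect most of the work to be a careful, but ultimately routine, computation of these local defect terms for each $\Gamma_k$ on the type~T list, leaning on the mass formulas of \cite{lebkmass} to organize it.
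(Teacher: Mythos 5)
Your proposal takes a genuinely different route from the paper's. You propose to compute $(2\chi+3\tau)$ topologically, via orbifold Gauss--Bonnet/signature together with Novikov additivity and $\eta$-invariant defect bookkeeping along each $S^3/\Gamma_k$, and then show the bubble-by-bubble defects cancel. The paper instead gets the same identity with essentially no index-theoretic bookkeeping: it uses the symplectic form $\omega$ on $M$ built in Theorem~\ref{upon}, notes that $K^{-m}$ of $(M,\omega)$ is topologically trivial on each bubble $Y_k$ (the Ricci-flat K\"ahler metric there makes $K_{Y_k}$ flat with finite monodromy), and chooses a generic section of $K^{-m}$ that is non-vanishing on every $Y_k$; its zero set $\Sigma$ lies entirely in the common region $X-\{p_k\}\subset M$, so $[\Sigma]\cdot[\Sigma]$ simultaneously computes $m^2c_1^2(M,\omega)=m^2(2\chi+3\tau)(M)$ and $m^2c_1^2(X,J)$, and the range $\{1,\dots,9\}$ then drops out of the diffeotype classification. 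The paper's argument is shorter, completely avoids the $\eta$-invariant/defect analysis, and in particular makes integrality of $c_1^2(X,J)$ manifest without any computation.

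Your outline can be made to work, but one link in your chain is misattributed: you credit the cancellation at each singularity to ``the ALE mass, via the mass-type formulas of \cite{lebkmass,bakanaka}.'' The mass is not the relevant mechanism. The actual reason the bubbles contribute nothing is that the $(2\chi+3\tau)$-integrand $\frac{s^2}{24}+2|W^+|^2-\frac{1}{2}|\mathring r|^2$ vanishes \emph{identically} on each $Y_k$, precisely because $Y_k$ is Ricci-flat with $W^+\equiv 0$; the remaining boundary contributions (second fundamental form terms and the $\eta(S^3/\Gamma_k)$ defects on either side of the gluing neck) then cancel by Atiyah--Patodi--Singer, not by any mass formula. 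Equivalently, as you observe, one can view the surgery algebro-geometrically as a crepant resolution in the $SU(2)$ case, or as a $\QQ$-Gorenstein smoothing in the $\frac{1}{\ell m^2}(1,\ell m n-1)$ case, both of which preserve $c_1^2$ --- and this is exactly where the ``type T'' hypothesis, i.e.\ admissibility, is used: a Ricci-flat ALE bubble with $W^+\not\equiv 0$ would contribute $\frac{1}{4\pi^2}\int 2|W^+|^2>0$ and break the identity. With the mass-formula step replaced by one of these correct mechanisms, your argument gives a valid, if considerably more laborious, alternative proof.
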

 \begin{proof} We have just seen that $M \approx (X-\{ p_k\} ) \cup \coprod_k Y_k$, where each $p_k$ is a singular point of $X$, and where 
  $Y_k$  is a corresponding gravitational instanton. Moreover, each overlap $(X-\{ p_k\} ) \cap Y_k$ is diffeomorphic to 
  $\RR \times (S^3/\Gamma_k)$ for an appropriate  finite group $\Gamma_k\subset U(2)$ that acts freely on $S^3$. Finally, we have seen that 
   $M$
 admits a symplectic structure $\omega$ that is symplectically isotopic to the K\"ahler form of $(J, g_\infty )$ on $X-\{ p_k \}$, and that 
 is symplectically isotopic to the  K\"ahler form of a Ricci-flat K\"ahler metric on each $Y_k$. This in particular implies that there exists
 a positive integer $m$ such that the $m^{\rm th}$ tensor power $K^{- m}$ of the anti-canonical line bundle $K^{-1}= \wedge^2 T^{1,0}$ of $(M,\omega)$ is 
 trivial on each $Y_k$, and  is bundle-isomorphic to the dual of the pluricanonical line bundle $K^{\otimes m}$ on the regular part $X-\{ p_k\} $
 of $X$. Let us now choose a generic smooth section of $f$ of $K^{-m}$  which is non-zero on each $Y_k$ and 
 meets the zero section of $K^{- m}$ transversely along a compatibly-oriented compact surface $\Sigma^2 \subset M^4$.
 The  homology class of $\Sigma$ in $M$ is then Poincar\'e dual to $m c_1 (M,\omega )\in H^2(M, \ZZ)$, while its
 homology class in $X$ is Poincar\'e dual to $m c_1^{\rm orb}(X, J)\in H^2 (X, \QQ )$. The self-intersection
 $[\Sigma ]\cdot [ \Sigma ]$ therefore computes both $m^2 c_1^2 (M, \omega )$ and $m^2 c_1^2 (X,J)$, and  we must consequently  have 
$c_1^2 (X, J) = c_1^2 (M, \omega ) = \langle p_1 (\Lambda^+) , M\rangle = (2\chi + 3\tau )(M)\in \{ 1, \ldots , 9\}$,
where the final   list  of possible values follows  immediately from  Theorem \ref{upon}.
  \end{proof}
  
 As a consequence, we now   immediately obtain  the following variation on a  result of Odaka-Spotti-Sun \cite{sunspot}: 

\begin{prop} 
\label{birdy} 
Suppose that the  $\lambda > 0$ K\"ahler-Einstein orbifold $(X^4,J, g_\infty)$ 
is the Gromov-Hausdorff limit of an admissible sequence $(M,g_i)$
 of smooth Einstein $4$-manifolds $(M,g_i)$. If a singular point of $(X,J)$ 
 has a neighborhood modeled on $\CC^2/\Gamma_k$, where 
 $\Gamma_k\subset U(2)$, then 
 $$|\Gamma_k| < \frac{12}{(2\chi + 3\tau) (M)}.$$
In particular, 
 every singular point has $\Gamma_k =  \ZZ_2$  if $(2\chi + 3 \tau )(M) \geq 4$. 
 \end{prop}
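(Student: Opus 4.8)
The plan is to read off the bound from the Bishop--Gromov volume comparison applied at the singular point, once the total volume of $(X,g_\infty)$ has been expressed in terms of $\lambda$ and $(2\chi+3\tau)(M)$. Set $N:=(2\chi+3\tau)(M)$, so $N\geq 1$, and recall from Proposition \ref{jobs} that $c_1^2(X,J)=N$. Since $g_\infty$ is K\"ahler--Einstein with Einstein constant $\lambda>0$, its K\"ahler form satisfies $[\omega]=\tfrac{2\pi}{\lambda}\,c_1(X,J)$ in $H^2(X,\RR)$, so $[\omega]^2=\tfrac{4\pi^2}{\lambda^2}\,c_1^2(X,J)$; since on the other hand $[\omega]^2=\int_X\omega^2=2\Vol(X,g_\infty)$, this gives $\Vol(X,g_\infty)=\tfrac{2\pi^2 N}{\lambda^2}$. (This is just the orbifold incarnation of the standard identity $c_1^2=\tfrac{1}{32\pi^2}\int_X s^2\,dV$, and it is the step where Proposition \ref{jobs} and the K\"ahler--Einstein hypothesis enter.)

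Next I would invoke Bishop--Gromov at the singular point $p_k$. Let $S^4(\lambda)$ be the round $4$-sphere with the same Einstein constant $\lambda$, i.e.\ of constant sectional curvature $\lambda/3$; then $\Vol(S^4(\lambda))=\tfrac{24\pi^2}{\lambda^2}$, and by Bonnet--Myers $\operatorname{diam}(X,g_\infty)\leq\operatorname{diam}(S^4(\lambda))=:d$. The ratio $r\mapsto \Vol(B_r(p_k))/V_{\lambda/3}(r)$, where $V_{\lambda/3}(r)$ is the volume of an $r$-ball in $S^4(\lambda)$, is non-increasing in $r$ (comparison geometry going through verbatim on orbifolds); as $r\to 0^+$ it tends to $1/|\Gamma_k|$ because a neighborhood of $p_k$ is modeled on $\CC^2/\Gamma_k$; and at $r=d$ it equals $\Vol(X,g_\infty)/\Vol(S^4(\lambda))$, since $B_d(p_k)=X$. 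Monotonicity therefore yields $\Vol(X,g_\infty)/\Vol(S^4(\lambda))\leq 1/|\Gamma_k|$, which upon inserting the two volumes becomes $|\Gamma_k|\leq 12/N$.

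To promote this to the strict inequality I would use the rigidity clause of Bishop--Gromov: equality would force $(X,g_\infty)$ to be isometric, as a Riemannian orbifold, to $S^4(\lambda)/\Gamma_k$, hence to have vanishing Weyl curvature and in particular $W^+\equiv 0$. But on any K\"ahler surface $W^+\colon\Lambda^+\to\Lambda^+$ has eigenvalues $(s/6,-s/12,-s/12)$, which are non-zero since $s=4\lambda\neq 0$ (compare Theorem \ref{hemhaw}(ii)); so $W^+\not\equiv 0$, a contradiction, and therefore $|\Gamma_k|<12/N$. The last assertion follows at once: $N\geq 4$ forces $|\Gamma_k|<3$, hence $|\Gamma_k|=2$ and $\Gamma_k\cong\ZZ_2$, as $p_k$ is a genuine singular point.

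The argument is short; the only places needing genuine care are (a) the volume normalization $\Vol(X,g_\infty)=2\pi^2N/\lambda^2$ --- in particular checking that it holds verbatim for the closed K\"ahler--Einstein orbifold and not just for manifolds --- and (b) the orbifold Bishop--Gromov package: the monotonicity, the identification of the $r\to 0^+$ volume density with $1/|\Gamma_k|$, and, above all, the rigidity statement underlying the strictness. I expect (b), and specifically the clean derivation of the strict inequality, to be the main point to get right.
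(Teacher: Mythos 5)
Your proof is correct and follows essentially the same route as the paper: Bishop--Gromov volume comparison at the singular point against the round space form $S^4/\Gamma_k$, converted to the $c_1^2$ bound via the K\"ahler--Einstein identity $\int_X s^2\,d\mu = 32\pi^2 c_1^2(X)$, with strictness coming from the fact that a $\lambda>0$ K\"ahler--Einstein metric cannot be a constant-curvature space form (since $W^+\not\equiv 0$ when $s\neq 0$). The only cosmetic difference is that the paper phrases the comparison directly as $\int_X s^2 < \int_{S^4/\Gamma_k} s^2$ and evaluates the right side via Gauss--Bonnet, whereas you compare volumes and compute $\Vol(S^4(\lambda))$ explicitly; since $s$ is the same constant on both sides, these are the same inequality.
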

\begin{proof} If $g_0$ is the metric induced on $S^4/\Gamma_k$ by the usual round metric on $S^4$, then 
the Bishop-Gromov inequality and Gauss-Bonnet  imply that  
$$32\pi^2 c_1^2(X) =   \int_X s^2_{g_\infty} d\mu_{g_\infty} <  \int_{S^4/\Gamma_k} s^2_{g_0} d\mu_{g_0}= \frac{24\cdot 8\pi^2\chi (S^4) }{|\Gamma_k |}= \frac{32\pi^2\cdot 12 }{|\Gamma_k |},$$
 where 
the  inequality is necessarily strict because the $\lambda> 0$ K\"ahler-Einstein metric $g_\infty$ cannot have constant sectional curvature.
The result thus follows, because  $c_1^2 (X) = (2\chi + 3\tau ) (M)$ by Proposition \ref{jobs}. 
\end{proof} 

Of course, Theorem \ref{beta} makes the stronger claim that this same conclusion actually  holds if we merely assume that the limit
Einstein orbifold $(X, g_\infty )$ is {\em Hermitian} --- or equivalently, by Theorem \ref{hemhaw}, that it is {\em conformally K\"ahler}.
 Moreover, once we have proved Theorem \ref{beta},  it will then follow that a Hermitian, Einstein orbifold limit of  an admissible sequence $(M,g_i) \to (X, g_\infty )$ can only 
 form singularities if $c_1^2 (M)=c_1^2 (X)\leq 4$; and, as we already know by  Theorem \ref{upon}, the latter is equivalent to requiring that $M\approx \CP_2 \# k \overline{\CP}_2$,   where 
 $5 \leq  k \leq 8$. 
  Indeed, the Odaka-Spotti-Sun classification \cite{sunspot} will then go on to predict that the limit of an admissible sequence can only 
  form  singularities of the following types:

\begin{itemize}
\item 
${   \CP_2}{   \# 5}  {   \overline{\CP}_2}$: only ${   A_1}$ singularities.
\item 
${   \CP_2}{   \# 6}  {   \overline{\CP}_2}$: only ${   A_1}$ or  ${   A_2}$  singularities.
\item 
${   \CP_2}{   \# 7}  {   \overline{\CP}_2}$: only ${   A_1}$, \ldots ,  ${   A_4}$  or ${   \frac{1}{4}(1,1)}$  singularities.
\item 
${   \CP_2}{   \# 8}  {   \overline{\CP}_2}$:  ${   A_1}$, \ldots ,  ${   A_{10}}$, ${   D_4}$,
 ${   \frac{1}{4}(1,1)}$, 
   ${   \frac{1}{8}(1,3)}$, or 
    ${   \frac{1}{9}(1,2)}$ singularities. 
\end{itemize} 
This striking consequence  helps illustrate the interest of  Theorems \ref{alpha} and  \ref{beta}. 

\section{Admissible Limits
and Desingularizations}

Our key to  understanding  Gromov-Hausdorff convergence of admissible sequences of Einstein metrics is based on 
\cite[Corollary 4.16]{ozu2}, which  we next present  in a  streamlined   form that has been adapted to  present  purposes:

\begin{thm}[Ozuch] \label{concrete} 
Given any positive real constant $\nu > 0$, there is a constant $\delta = \delta (\nu) > 0$ such that, 
  whenever $(M, g^{\mathcal{E}})$ is a smooth compact   Einstein $4$-manifold with 
\begin{itemize}
\item total volume  $\geq  \nu$,  and 
\item Ricci curvature $= 3$, 
\end{itemize} 
\noindent  and for which  there is an Einstein $4$-orbifold $(X,g_\infty)$ with 
$$d_{GH} ( (M, g^{\mathcal{E}} ), (X, g_\infty )) <  \delta,$$
then $(M, g^{\mathcal{E}})$ is obtained from $(X, g_\infty)$ by  the gluing-perturbation procedure  detailed in \S \ref{basics} below. 
\end{thm}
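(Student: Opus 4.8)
Theorem \ref{concrete} is a streamlined restatement of \cite[Corollary 4.16]{ozu2}, so the work is to verify that the normalizations imposed here ($\mathrm{Ric}=3$ and total volume $\ge\nu$) match those under which that result is proved, and that the ``gluing-perturbation procedure'' it refers to is exactly the construction of \S\ref{basics}. The plan is to run the argument qualitatively along a contradiction sequence and to extract the quantitative constant at the end. So fix $\nu>0$ and suppose the claim fails: there are smooth compact Einstein $4$-manifolds $(M_i,g^{\mathcal E}_i)$ with $\mathrm{Ric}=3$, $\mathrm{Vol}(M_i,g^{\mathcal E}_i)\ge\nu$, and Einstein $4$-orbifolds $(X_i,g_{\infty,i})$ with $d_{GH}\big((M_i,g^{\mathcal E}_i),(X_i,g_{\infty,i})\big)\to 0$, yet no $(M_i,g^{\mathcal E}_i)$ is obtained from $(X_i,g_{\infty,i})$ by the gluing-perturbation construction. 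We will derive a contradiction.

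The first step is a local analysis at the orbifold points. Since $\mathrm{Ric}=3$, Bishop--Gromov bounds the volume above by $\mathrm{Vol}(S^4)$ and, together with the lower bound $\mathrm{Vol}\ge\nu$, controls local volume ratios, so Anderson's $\varepsilon$-regularity theorem \cite{andprior} applies locally. Consequently, on the complement of definite-size neighborhoods of the finitely many points of $M_i$ lying over the singular set of $X_i$, the metric $g^{\mathcal E}_i$ is $C^\infty$-close, via the Gromov--Hausdorff approximation, to the regular part of $g_{\infty,i}$; while at each such point, rescaling the concentrating curvature and extracting pointed limits --- iterated through the finitely many scales of \cite{andprior,bando1,nakajima} until the deepest bubbles are smooth --- produces a finite tree of Ricci-flat ALE spaces attached along the spherical links $S^3/\Gamma$. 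This identifies, for $i\gg 0$, an explicit ``naive'' glued metric $g^{\natural}_i$ on a manifold diffeomorphic to $M_i$, assembled from $g_{\infty,i}$, rescaled truncations of the ALE bubbles, and cut-off interpolations in the neck regions, exactly as in \S\ref{basics}. The $C^\infty$ control on the regular part together with the order-$4$ ALE decay of the bubbles then yields $\|g^{\mathcal E}_i - g^{\natural}_i\|\to 0$ in the weighted H\"older norm governing the gluing problem.

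The second step is the gluing-perturbation rigidity. The construction of \S\ref{basics} (the content of \cite{ozu2}) solves the Einstein equation in a fixed weighted-H\"older ball around $g^{\natural}_i$ by an implicit-function scheme, corrected by a \L ojasiewicz--Simon argument that accounts for the (possibly nontrivial) infinitesimal Einstein deformations --- the obstruction or moduli spaces --- of the bubbles and of $(X_i,g_{\infty,i})$; its output is, up to diffeomorphism, the \emph{unique} Einstein metric in that ball. Since $g^{\mathcal E}_i$ is Einstein and, for $i$ large, lies in that ball, it must agree up to diffeomorphism with the output of the construction --- contrary to our assumption. Keeping explicit track of the constants produced in each step above then yields a single $\delta=\delta(\nu)$ for which the assertion holds.

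The step I expect to be the main obstacle is the gluing-perturbation rigidity just sketched: proving that \emph{every} Einstein metric in a fixed neighborhood of $g^{\natural}_i$ comes from the construction is exactly where obstructed deformation theory enters, and it is handled only by a delicate \L ojasiewicz--Simon analysis on the moduli of the bubbles and of the orbifold. Closely tied to this is the requirement that the neighborhood radius, and all the preceding estimates, be uniform in the a priori unknown orbifold $X$ --- which forces the bubble extraction and the rigidity step to be fully quantitative and, in particular, to handle the iterated surgery that occurs when some bubbles are themselves orbifolds. It is precisely this uniform, quantitative package that \cite[Corollary 4.16]{ozu2} is designed to supply.
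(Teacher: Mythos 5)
Your diagnosis is exactly the paper's: Theorem \ref{concrete} is simply \cite[Corollary 4.16]{ozu2} in streamlined form, and the only real work is to check that the stated normalizations match those of the cited result. The paper's proof consists solely of that check plus the citation: it observes that $\mathrm{Ric}=3$ forces diameter $\leq\pi$ by Myers' theorem, so that, together with the lower volume bound $\geq\nu$, the hypotheses furnish precisely the uniform diameter and non-collapsing bounds under which \cite[Corollary 4.16]{ozu2} applies, and the conclusion is then imported verbatim. You instead go on to re-sketch the proof of the cited corollary itself --- contradiction sequence, $\varepsilon$-regularity, bubble-tree extraction, naive gluing, and a rigidity step --- which is unnecessary here, and in doing so you omit the Myers diameter estimate, the one specific observation the paper actually supplies; the Bishop--Gromov volume upper bound you invoke is part of Ozuch's machinery but is not the normalization the paper flags. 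In short, you correctly reduce the theorem to the right citation but then expend effort re-proving that citation while skipping the brief normalization remark that constitutes the paper's entire argument.
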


Here we have simplified the  discussion  by  homothetically rescaling our $\lambda > 0$ Einstein metrics in order to arrange for them 
to  have $\lambda = 3$, which then guarantees,  by Myers' theorem,  that they all have diameter $\leq \pi$. This simple observation 
then guarantees that  Theorem \ref{concrete} is  a straightforward  consequence  of \cite[Corollary 4.16]{ozu2}.

\bigskip

The gluing-perturbation procedure cited  in  Theorem \ref{concrete} 
consists of first creating a smooth manifold by iteratively replacing each orbifold singularity with the corresponding the gravitational instanton that bubbles off,
and then equipping the resulting  manifold with a family of so-called Einstein-modulo-obstruction metrics that  satisfy a weakened form of the Einstein condition.
Following \cite{ozu1,ozu2}, the manifolds that arise by this procedure will often be called {\em na\"{\i}ve desingularizations} of the original orbifold. 
For a quantitative discussion of these technical points, see \S \ref{basics} below. 

\bigskip 

In the absence of further information, however, the construction of    na\"{\i}ve desingularizations depends on a detailed inventory 
of the specific 
trees of gravitational instantons that are to replace  each orbifold singularity, along with scaling data that determines the relevant  rates at which the
relevant gravitational instantons are to bubble off. However, 
 in the context of 
admissible sequences, the  allowed bubbling modes at each singularity actually form a single continuous family, and any 
 resulting desingularization therefore belongs to a single predetermined diffeotype, which the present paper's second author has 
elsewhere    \cite{ozu1,ozu2} called the 
{\em expected topology}.  Combining  the framework of \cite[\S 5]{ozu2} with Theorem \ref{upon} therefore now provides us with the basic scaffolding 
needed to support our proofs of Theorems \ref{alpha} and \ref{beta}:

\bigskip 

\begin{prop} 
\label{universal}
Let $(X^4, g_\infty)$ be a compact Hermitian, Einstein orbifold with $\lambda > 0$  and only isolated singularities of
{\sf type T}. Then there is  a single family of na\"{\i}ve desingularizations of $(X, g_\infty)$ by Einstein-modulo-obstruction manifolds diffeomorphic to a Del Pezzo surface such that any admissible sequence
$(M, g_i)$ of Einstein $4$-manifolds that Gromov-Hausdorff converges to $(X, g_\infty)$ consists, for sufficiently large $i$, 
of na\"{\i}ve desingularizations
that belong to this family. 
\end{prop}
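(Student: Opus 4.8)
The plan is to assemble this proposition from the structural results already established in the excerpt, so the proof is really a matter of bookkeeping about \emph{which} desingularizations can occur. First I would invoke Theorem \ref{upon} (together with Proposition \ref{once}, which guarantees that every singularity of $X$ is of \textsf{type T}) to fix the diffeotype of any naïve desingularization: any admissible sequence converging to $(X,g_\infty)$ forces the underlying manifold $M$ to be a Del Pezzo surface, and in fact, by Proposition \ref{jobs}, $c_1^2(M) = c_1^2(X,J)$ is a fixed integer in $\{1,\dots,9\}$ determined by $(X,J)$ alone. So the diffeotype is already pinned down by the orbifold, not by the sequence.

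Next I would recall the content of \cite[\S 5]{ozu2}: a naïve desingularization of $(X,g_\infty)$ is specified by (a) a choice, at each singular point $p_k \cong \RR^4/\Gamma_k$, of a tree of Ricci-flat ALE spaces whose deepest bubbles are smooth and whose iterated surgery produces an ALE manifold with tangent cone $\RR^4/\Gamma_k$, and (b) gluing/scaling parameters governing the rates at which these bubbles shrink. The key observation — this is where \textsf{type T} and Suvaina--Wright (Proposition \ref{revealed}) enter — is that for a \textsf{type T} singularity $\CC^2/\Gamma_k$, the admissibility hypothesis ($W^+ \equiv 0$ on every bubble) restricts the bubble at $p_k$ to be one of the anti-self-dual Ricci-flat ALE \emph{Kähler} spaces of Proposition \ref{revealed}, and for each fixed $\Gamma_k$ these form a \emph{single connected deformation family} (the hyper-Kähler ALE spaces of the given Dynkin type, or their free $\ZZ_m$-quotients), all of a single diffeomorphism type, glued in by maps isotopic to one another. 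Hence the combinatorial data in (a) is rigid: there is only one tree-type per singular point, and the construction of \cite[\S 5]{ozu2} then organizes all the remaining continuous gluing data into one connected family of Einstein-modulo-obstruction metrics on one fixed manifold $M$ — the ``expected topology.'' I would then cite \cite[\S 5]{ozu2} for the statement that this family is the \emph{only} one into which admissible sequences can converge, and combine it with Theorem \ref{concrete}: for $i \gg 0$ the Gromov--Hausdorff-closeness hypothesis of Theorem \ref{concrete} is met, so $(M,g_i)$ is produced by the gluing-perturbation procedure applied to $(X,g_\infty)$, and by the rigidity just discussed this forces $(M,g_i)$ to lie in the single family described.

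The main obstacle — and the step requiring the most care — is justifying that admissibility genuinely collapses the discrete combinatorial choices in the bubbling data down to a \emph{unique} tree per singularity, so that what a priori could be a disjoint union of families indexed by different surgery patterns is actually one connected family on one manifold. This amounts to checking: (i) that an anti-self-dual Ricci-flat ALE space with prescribed tangent cone $\CC^2/\Gamma_k$ of \textsf{type T} is unique up to diffeomorphism (Suvaina \cite{ioana2}, Wright \cite{wright}, Kronheimer \cite{krontor}), (ii) that deeper bubbles in any such tree are themselves of this constrained form, so the iteration is forced at every level, and (iii) that the attaching maps are all isotopic, so the resulting smooth manifold is well-defined independent of the choices — this last point being exactly the Moser-stability/symplectic-isotopy argument already run in the proof of Theorem \ref{upon} and Proposition \ref{jobs}. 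Once these are in hand, the matching of ``expected topology'' with the Del Pezzo diffeotype from Theorem \ref{upon} is automatic, and the proposition follows. I would keep the write-up short, leaning on \cite{ozu2} and the earlier results, since the real work has been front-loaded into Theorem \ref{upon}, Proposition \ref{once}, and Proposition \ref{revealed}.
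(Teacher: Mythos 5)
Your plan matches the paper's intended route, which is that Proposition \ref{universal} is essentially stated as a corollary of combining Theorem \ref{concrete} (any nearby Einstein metric arises from the gluing-perturbation procedure applied to $(X,g_\infty)$) with Theorem \ref{upon} (the resulting manifold is a Del Pezzo) and the observation that, for singularities of {\sf type T} under the admissibility hypothesis, the allowed bubbling data form a single continuous family with a fixed diffeotype --- the ``expected topology.'' The paper gives no separate proof environment for the proposition; its argument is exactly the paragraph preceding it, which your write-up correctly unpacks. The references you would lean on --- Proposition \ref{once}, Proposition \ref{revealed}, Proposition \ref{jobs}, \cite{ioana2,wright,krontor}, \cite[\S 5]{ozu2} --- are the right ones.

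The one phrase I would push back on is ``the combinatorial data in (a) is rigid: there is only one tree-type per singular point.'' That overstates the situation. Trees of different combinatorial structure genuinely can occur (curvature can concentrate at several distinct length scales, producing an orbifold ALE bubble with deeper bubbles inside), and the paper's Section \ref{forest} devotes real effort to handling exactly this. What is true --- and what you should say instead --- is that for a {\sf type T} singularity with $\Gamma_k$ fixed, every admissible tree is built from K\"ahler Ricci-flat ALE pieces, and by \cite[Section 6.2]{ozu2} any such tree glued together is approximated by a {\em single} K\"ahler Ricci-flat ALE metric in the Kronheimer/\c{S}uvaina family attached to $\Gamma_k$. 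So the trees are boundary degenerations of one connected moduli space rather than discrete alternatives, the resulting smooth manifold is independent of the tree (your point (iii), via the Moser-isotopy argument from Theorem \ref{upon}), and hence all the gluing data parametrize one connected family of na\"{\i}ve desingularizations on one fixed $M$. With that correction, your points (i)--(iii) give exactly the verification the paper is implicitly invoking, and the rest of the assembly is sound.
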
  

\section{The Role of Wu's Criterion}

Our proofs of   Theorems \ref{alpha} and \ref{beta} will  also hinge  on 
 showing that, for all $i \gg 0$, the Einstein manifolds  $(M, g_i)$ must 
everywhere  satisfy 
 Wu's criterion 
\begin{equation}\label{pengwu}
    \det(W^+) >0, 
\end{equation}
where the self-dual Weyl curvature $W^+$ is considered as an endomorphism of the $3$-dimensional vector space $\Lambda^+\subset \Lambda^2$ of self-dual 
$2$-forms at each point of $M$. As was first emphasized by Peng Wu  \cite{pengwu}, this conformally-invariant condition is satisfied
by any conformally K\"ahler, Einstein metric on a Del  Pezzo surface. Conversely, the first author \cite{lebdet} and Wu \cite{pengwu} proved,
by two entirely different methods, 
 that  any Einstein metric on a compact simply-connected manifold that  satisfies
\eqref{pengwu} everywhere must actually be  conformally K\"ahler. Since \eqref{pengwu}  is an   \textit{open} condition, this in particular means that 
smooth 
 Einstein perturbations of  Hermitian, Einstein metrics always remain Hermitian. The gist of Theorems \ref{alpha} and \ref{beta}  is 
 essentially that  the same  also holds 
  for admissible sequences  that merely tend, in the \textit{Gromov-Hausdorff} sense,  to  an Einstein orbifold 
that satisfies any of the equivalent conditions of Theorem \ref{hemhaw}. 
To achieve this goal, we will use Proposition \ref{universal} to show that   condition \eqref{pengwu} continues to hold for such perturbations by   carefully controlling the  possible degenerations of Einstein 
$4$-manifolds \cite{ozu3,ozu1,ozu2} in a manner that provides strong information regarding  their curvature  \cite{biq2}.
Our proof will  thus  unfold  in accordance with  the following rough outline:
\begin{enumerate}[(a)]
    \item Construct suitable approximations $g^A_{t_i,v_i}$ of the Einstein metrics $g_i$.
    \item Show that these approximations $g^A_{t_i,v_i}$ satisfy condition \eqref{pengwu} for $i \gg 0$.
    \item Show that condition  \eqref{pengwu}  then also holds for  the given $g_i$ when $i \gg 0$.
    \item Invoke  \cite{lebdet,pengwu} to  conclude  that these $g_i$ must therefore  be  Hermitian. \end{enumerate}

The rest of our article is therefore organized as follows. In Section \ref{basics}, we will review  some key  technical facts  that underpin  our 
construction. In Section 
\ref{simplest}  we then 
prove Theorems \ref{alpha} and \ref{beta} in the prototypical case where there is only one singularity, and  trees of singularities do not arise. In Section 
\ref{indicative}, we then generalize this to allow for multiple singularities,  but still in the absence of 
bubble-trees. Finally, in Section \ref{forest}, we  prove our results in  the most general but  technically-challenging  setting, where the formation of bubble-trees 
is   allowed.

\section{Desingularizing Einstein Orbifolds}
\label{basics} 

%

Recall that the {\em Einstein tensor} of a Riemannian $n$-manifold $(M,g)$ is the  symmetric tensor field defined by 
$$\mathscr{E} = r - \frac{s}{2}g,$$
where $r$ and $s$ are, respectively,  the Ricci tensor and scalar curvature of $g$; this definition  is chosen so that    the differential  Bianchi identity
implies that $E$ is 
 automatically {\em divergence-free} with respect to $g$. 
In the  $n=4$ case that  is our sole concern here, the Einstein tensor  takes the simple and attractive  form  
 $$\mathscr{E} = \mathring{r}  - \frac{s}{4} g$$
when expressed in terms of the trace-free part  $\mathring{r}$  of the Ricci tensor $r$,
thereby allowing the Einstein equation  
$$r= \lambda g$$
for a chosen   Einstein constant $\lambda$ to  be rewritten as 
$$\mathscr{E} + \lambda g=0.$$
However,  because  of  the diffeomorphism-invariance of the Einstein condition, this last equation fails to be  elliptic. To remedy 
this, we now choose some background metric $g_0$, and then define a ``gauged and   normalized'' version 
\begin{equation}
\label{gaugedEinstein}
\mathscr{F}_{g_0}(g) = \mathscr{E} (g) + \lambda g + \delta_{g}^*\delta_{g_0} (g)
\end{equation}
of the Einstein tensor. On a compact manifold $M$, the elliptic equation
\begin{equation}
\label{pressure}
\mathscr{F}_{g_0}(g)=0
\end{equation}
is then equivalent to the pair of conditions 
\begin{empheq}[box=\fbox]{align}
 \phantom{\int}  \mathscr{E}(g)+\lambda g &= 0,\\
   \delta^*_g \delta_{g_0}g &= 0.  \phantom{\int}
    \label{pressure-gauge}
\end{empheq}
Indeed, since  $\delta_g\mathscr{E}=0$,  applying $\delta_g$ to  \eqref{pressure}  yields $\delta_g\delta^*_g \delta_{g_0} g=0$,  and this then implies that 
$$\int_M \| \delta^*_g \delta_{g_0} g \|^2 \, d\mu_g = \int_M \langle \delta_{g_0} g ,\,  \delta_g \delta^*_g \delta_{g_0} g \rangle \, d\mu_g =0.$$
Since $\ker \delta_g^*$ exactly consists of the Killing fields on  $(M, g)$, equation  
  \eqref{pressure-gauge}  is equivalent to saying  that $\delta_{g_0}g$ is a Killing field of $g$. 
This  formalism becomes particularly elegant  and natural if $g$ is known, for some {\em a priori} reason, to have discrete isometry group,
and the reader is therefore  invited to provisionally focus on the case in which neither the limit orbifold nor  the  smooth metrics  in the given sequence  admit any Killing vector fields. However, the presence of Killing fields merely leads to a mild failure in uniqueness when representing 
a given Einstein geometry by a solution of \eqref{pressure}. While this will have no impact on the regularity-type results that are our goal here, 
the interested reader may wish to consult   \cite{ozu2}  for further discussion of this issue. 

\medskip 

In dimension four,  
a  straightforward (albeit  tedious) calculation   shows  that 
 the linearization 
$\bar{P}_{g_0} = d\mathscr{F}|_{g_0}$ 
of \eqref{gaugedEinstein} at $g_0$ is the elliptic operator 
\begin{eqnarray}
\label{eq:def bar P}
        \bar{P}_{{g_0}}{h} &=& \textstyle{\frac{1}{2}}\Big[\nabla^*\nabla \mathbf{h}  - \Hess (\tr h)
        -2\mathring{\mathcal{R}} (h) +2\Sym (r\circ h)
        \nonumber\\ 
        && 
        -(  \Delta  \, \textup{tr} \, h  -\nabla^* \delta h -\langle \Ric , h \rangle ){g_0} +(2\lambda -s) h\Big], 
\end{eqnarray}
where the relevant operators, curvatures, traces, and inner products 
 are all defined with respect to $g_0$; here $\Hess$ and $\Delta =d^*d$ respectively denote the Hessian and positive-spectrum Laplacian on scalar-valued functions, 
 while the  Riemann curvature tensor acts on symmetric $2$-tensors  by 
$$[\mathring{\mathcal{R}}(h)]_{ab}:= {{{\mathcal{R}^c}_a}^d}_b~h_{cd}.$$ 
 In particular, when  $g_0$ is Einstein, with Einstein constant $\lambda$, 
   and when the symmetric $2$-tensor is $h$ 
    is traceless and divergence-free,   the 
linearization $\bar{P}_{g_0}$ then  reduces to 
\begin{equation}\label{eq: def P}
    P_{g_0}:= \frac{1}{2}\nabla^*\nabla - \mathring{\mathcal{R}}
\end{equation}
or, in other words,  half the Lichnerowicz Laplacian \cite[p. 54]{bes} of $g_0$. 

\begin{rmk} Of course, these operators can naturally be defined on symmetric $2$-tensors belonging to various function spaces.
On orbifolds, ALE spaces and their gluings, the theory developed in \cite{ozu3,ozu1,ozu2} actually considers them in the function spaces $C^{k,\alpha}_\beta$, $C^{k,\alpha}_{\beta,*}$ and $C^{k,\alpha}_{\beta,**}$ described in  Appendix  \ref{function spaces}.
\end{rmk}
 

In order to begin  implementing this last remark, we  now focus on the case of   an oriented $4$-dimensional 
orbifold $(X,g_\infty)$  with only finitely many  singularities, all  of {\sf type T}. Recall that, according to Definition \ref{loner}, 
this means that each singular point  $p_k\in X$ has a neighborhood modeled on a neighborhood of the origin in a 
 quotient   $\RR^4/\Gamma_k$, $\Gamma_k < SO(4)$, 
that also occurs  as the tangent-cone-at-infinity of some Ricci-flat ALE manifold with $W^+\equiv 0$. We also remind the reader that a
complete list  of the $\Gamma_k$ allowed  by this definition is provided  by  Proposition \ref{revealed}. Now notice that Definition \ref{loner} 
tells us that it 
makes sense to modify $X$ by cutting out a neighborhood of $p_k\in X$ and replacing it with an anti-self-dual Ricci-flat ALE manifold. 
In order to carry out this procedure  in a systematic way, we will next  establish some conventions for doing so.

\begin{defn}[The function $\rad_o$ on an orbifold]\label{ro} Let $(X,g_\infty )$ be a complete oriented $4$-dimensional orbifold with only  finitely many isolated singularities.
Choose an  $\epsilon_0\in  (0,1)$ such that the Riemannian $\epsilon_0$-distance ball $U_i=B_{\epsilon_0}(p_k)$ around each singular point $p_k$ is orientedly diffeomorphic, via the exponential map,  to $B^4/\Gamma_k$ for
some finite group $\Gamma_k< {SO}(4)$ that acts freely on $S^3$. 
    We then choose    a smooth function $\rad_o: X\to [0,1]$ which, on each $U_i$,   coincides with the Riemannian distance 
    from the singular point $p_k$, and which satisfies $\rad_o \geq \epsilon_0$ on the complement of the union of the $U_i$. For every 
    $\epsilon \in (0,\epsilon_0)$, we then also set 
    $$X(\epsilon):= \rad_o^{-1} [\epsilon, 1] = X -  \Big(\bigcup_k B_\epsilon (p_k)\Big)$$
    where $ B_\epsilon (p_k)\subset U_i$ is the radius-$\epsilon$ Riemannian distance ball about $p_k$. 
\end{defn}

Next, we  define $\mathfrak{O}(g_{\infty})$ to be the space of traceless, divergence-free symmetric $2$-tensors on a $4$-dimensional manifold or orbifold with metric $g_{\infty}$ 
 that  belong to the kernel of $P_{g_{\infty}}$. Then, as explained in \cite[Section 5]{ozu2},  there is a one-to-one map from a neighborhood of $0\in\mathfrak{O}(g_{\infty})$ to the moduli space of \textit{Einstein-modulo-obstructions} perturbations of $g_{\infty}$ defined by \eqref{pressure}:

\begin{prop}
    For each $v_o\in \mathfrak{O}(g_\infty )$, there is a unique  metric $\Bar{g}_{v_o}$ satisfying: 
    \begin{enumerate}
        \item $\mathscr{F}_{g_{\infty}}(\Bar{g}_{v_o}) \in \mathfrak{O}(g_{\infty}),$
        \item $\|\Bar{g}_{v_o}-g_\infty\|_{C^{2,\alpha}}\leqslant C \|v_o\|_{C^{2,\alpha}}^2$, and
        \item $\Bar{g}_{v_o}-(g_\infty+v_o)\perp_{L^2(g_\infty)}\mathfrak{O}(g_\infty)$.
    \end{enumerate}
    for some $C>0$.
  In what follows,  this  metric $\Bar{g}_{v_o}$  will be called an  {\em Einstein-modulo-obstructions} metric.
\end{prop}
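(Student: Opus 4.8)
The plan is to produce $\bar g_{v_o}$ by a standard implicit-function/Lyapunov--Schmidt construction applied to the gauged Einstein operator $\mathscr{F}_{g_\infty}$ introduced in \eqref{gaugedEinstein}. First I would work in the weighted H\"older spaces $C^{k,\alpha}_{\beta}$ of Appendix \ref{function spaces}, and recall from \eqref{eq:def bar P}--\eqref{eq: def P} that the linearization of $\mathscr{F}_{g_\infty}$ at $g_\infty$, restricted to traceless divergence-free tensors, is $P_{g_\infty}=\frac12\nabla^*\nabla-\mathring{\mathcal{R}}$, i.e.\ half the Lichnerowicz Laplacian. Since $\mathfrak{O}(g_\infty)=\ker P_{g_\infty}$ is by definition the obstruction space, $P_{g_\infty}$ is Fredholm and its image is the $L^2(g_\infty)$-orthogonal complement of $\mathfrak{O}(g_\infty)$ (self-adjointness of the Lichnerowicz Laplacian on the relevant space). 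On the full space of symmetric $2$-tensors one must also account for the gauge directions, but the gauge-fixing term $\delta_g^*\delta_{g_0}g$ in $\mathscr{F}$ is precisely designed so that $\bar P_{g_\infty}$ is elliptic and its cokernel is still $\mathfrak{O}(g_\infty)$; this is exactly the setup of \cite[Section 5]{ozu2}, which I would cite for the mapping properties.

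Next I would set up the Lyapunov--Schmidt decomposition. Let $\pi$ denote $L^2(g_\infty)$-orthogonal projection onto $\mathfrak{O}(g_\infty)$ and $\pi^\perp=\mathrm{Id}-\pi$. For $v_o\in\mathfrak{O}(g_\infty)$ small, I seek $\bar g_{v_o}=g_\infty+v_o+w$ with $w\perp_{L^2(g_\infty)}\mathfrak{O}(g_\infty)$ (this is condition (3)) solving $\pi^\perp\mathscr{F}_{g_\infty}(g_\infty+v_o+w)=0$. Writing $\mathscr{F}_{g_\infty}(g_\infty+v_o+w)=\bar P_{g_\infty}(v_o+w)+Q(v_o+w)$ where $Q$ collects the quadratic-and-higher terms and $\bar P_{g_\infty}v_o=0$, this becomes $\pi^\perp\bar P_{g_\infty}w=-\pi^\perp Q(v_o+w)$. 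Because $\pi^\perp\bar P_{g_\infty}$ is an isomorphism from $\{w\perp\mathfrak{O}(g_\infty)\}$ onto $\mathrm{im}\,\bar P_{g_\infty}=\pi^\perp$-range, I can invert it and apply the Banach fixed point theorem to the map $w\mapsto -(\pi^\perp\bar P_{g_\infty})^{-1}\pi^\perp Q(v_o+w)$ on a small ball in $C^{2,\alpha}$; since $Q$ is quadratic in its argument, this contraction argument produces, for $\|v_o\|_{C^{2,\alpha}}$ small, a unique fixed point with $\|w\|_{C^{2,\alpha}}\le C\|v_o\|_{C^{2,\alpha}}^2$, which gives conditions (2) and (3). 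Then $\mathscr{F}_{g_\infty}(\bar g_{v_o})=\pi\mathscr{F}_{g_\infty}(\bar g_{v_o})\in\mathfrak{O}(g_\infty)$ automatically, which is condition (1). Uniqueness of $\bar g_{v_o}$ among metrics satisfying (1)--(3) follows from the uniqueness in the fixed-point theorem once one observes that any such metric differs from $g_\infty+v_o$ by a $w$ solving the same equation.

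The one genuinely delicate point --- and the step I expect to be the main obstacle --- is the mapping property of $\bar P_{g_\infty}$: one must know that, on the orbifold $X$ with its {\sf type T} singularities (or, in later sections, on a na\"ive desingularization), $\bar P_{g_\infty}$ acting between the correct weighted spaces is Fredholm with cokernel exactly $\mathfrak{O}(g_\infty)$ and no small eigenvalues of the inverse other than those killed by $\pi$. This is where the weights $\beta$ and the decorated spaces $C^{k,\alpha}_{\beta,*}$, $C^{k,\alpha}_{\beta,**}$ matter, since indicial roots at the orbifold points and at ALE ends must be avoided, and it is precisely the content developed in \cite{ozu3,ozu1,ozu2}. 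Accordingly, I would not reprove this here but invoke \cite[Section 5]{ozu2} directly; the remaining argument is the routine contraction-mapping estimate sketched above, with the quadratic estimate on $Q$ following from the explicit schematic form $Q(h)=h*\nabla^2 h+\nabla h*\nabla h+(\text{curvature})*h*h+\cdots$ and the algebra property of $C^{2,\alpha}$.
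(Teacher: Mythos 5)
Your proposal is correct and matches the paper's approach: the paper itself gives no proof beyond deferring to \cite[Section 5]{ozu2}, and your Lyapunov--Schmidt / contraction-mapping construction, with the Fredholm and cokernel properties of $\bar{P}_{g_\infty}$ invoked from that reference, is precisely what is done there. One small point worth flagging: your argument yields $\|\bar{g}_{v_o}-(g_\infty+v_o)\|_{C^{2,\alpha}}\le C\|v_o\|_{C^{2,\alpha}}^2$ rather than condition (2) as literally printed, $\|\bar{g}_{v_o}-g_\infty\|_{C^{2,\alpha}}\le C\|v_o\|_{C^{2,\alpha}}^2$; the latter would force $v_o=0$ given condition (3), so condition (2) in the statement appears to contain a typo and your estimate is the intended one.
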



We next recall some important key facts regarding  ALE Ricci-flat metrics.

\begin{defn}[The function $\rad_b$ on an ALE orbifold]\label{rb}
We define $\rad_b$ a smooth function on $N$ satisfying $\rad_b:= (\Psi_k)_* \yha_{\eucl}$ on each $U_k$, and $\rad_b:= (\Psi_\infty)_* \yha_{\eucl}$ on $U_\infty = N -  K$, and such that $\epsilon_0\leqslant  \rad_b\leqslant \epsilon_0^{-1}$ on the rest of $N$.
For $0<\epsilon\leqslant\epsilon_0$, we then set
 $$N(\epsilon):= \{\epsilon<\rad_b<\epsilon^{-1}\} = N -   \Big(\bigcup_k \Psi_k\Big(\overline{B_\eucl(0,\epsilon)}\Big) \cup \Psi_\infty \Big((\mathbb{R}^4\slash\Gamma_\infty) -  B_\eucl(0,\epsilon^{-1})\Big)\Big).$$
\end{defn}

Finally, we define $\mathfrak{O}(g_b)$ to be the space of infinitesimal Einstein deformations of $g_b$ which are traceless and divergence-free. 
As is explained in \cite[Section 5]{ozu2},  there is a one-to-one map from a neighborhood of $0\in\mathfrak{O}(g_b)$ to the moduli space of K"ahler \textit{Ricci-flat ALE metrics} perturbations of $g_b$.

\begin{defn}
    For each $v_b\in \mathfrak{O}(g_b)$, and for a suitable $C>0$, we let $\Bar{g}_{v_b}$ denote 
   the unique Einstein-modulo-obstructions metric satisfying: 
    \begin{enumerate}
        \item $\mathscr{F}_{g_b}(\Bar{g}_{v_b}) =0$,  
        \item $\|\Bar{g}_{v_b}-g_b\|_{C^{2,\alpha}_\beta}\leqslant C \|v_b\|_{C^{2,\alpha}_\beta}^2$, and
        \item $[\Bar{g}_{v_b}-(g_b+v_b)]\perp_{L^2(g_b)}\mathfrak{O}(g_b)$.
    \end{enumerate}
\end{defn}

\label{reecriture-controle}

We next  review the notion of a na\"{\i}ve desingularization of an orbifold, which played a crucial  role in the results of  \cite{ozu1,ozu2}.

\paragraph{Gluing of ALE spaces to orbifold singularities.}


    Let $0<2\epsilon<\epsilon_0$ be a fixed constant, $t>0$, $(Y,g_\infty)$ an orbifold and $\Phi: B_\eucl(0,\epsilon_0)\subset\mathbb{R}^4\slash\Gamma \to U$ the inverse of a local chart  around a singular point $p\in Y$. Let also $(N,g_b)$ be an ALE orbifold asymptotic to $\mathbb{R}^4\slash\Gamma$, and $\Psi: (\mathbb{R}^4\slash\Gamma)- B_\eucl(0,\epsilon_0^{-1}) \to N- K$ the inverse of an orbifold chart at infinity.
    
   For  $\mathsf{s}>0$, we let  $\phi_\mathsf{s}: \mathbb{R}^4\slash\Gamma \to \mathbb{R}^4\slash\Gamma$ denote the 
   rescaling map   $x\to \mathsf{s}x$. For $t<\epsilon_0^4$, we then define $Y\#N$ as $N$ glued to $Y$ via  the diffeomorphism $$ \Phi\circ\phi_{\sqrt{t}}\circ\Psi^{-1} : \Psi(A_\eucl(\epsilon_0^{-1},\epsilon_0t^{-1/2}))\to \phi(A_\eucl(\epsilon_0^{-1}\sqrt{t},\epsilon_0)).$$ We moreover fix some  $C^\infty$ cut-off function $\Upsilon:[0, \infty )\to [0,1]$ 
  supported in $[0,2]$ and equal to $1$ on $[0,1]$.
    
\begin{defn}[Na\"{\i}ve gluing of an ALE space to an orbifold]\label{def naive desing}
    We define a \emph{na\"{\i}ve gluing} of $(N,g_b)$ at scale $0<t<\epsilon_0^4$ to $(Y,g_\infty)$ at the singular point $p$, which we will denote $(Y\#N,g_\infty\#_{p,t}g_b)$ by putting $g_\infty\#_{p,t}g_b=g_\infty$ on $Y -  U$, $g_\infty\#_{p,t}g_b=tg_b$ on $K$, and 
    \begin{equation}\label{naive gluing metrics}
        g_\infty\#_{p,t}g_b :=  \Upsilon(t^{-\frac{1}{4}}\yha_{\eucl})\Psi^*g_b + \Big(1-\Upsilon(t^{-\frac{1}{4}}\yha_{\eucl})\Big)\Phi^*g_\infty
    \end{equation}
    on $\mathcal{A}(t,\epsilon):=A_\eucl(\epsilon^{-1}\sqrt{t},2\epsilon)$. Given two tensors $h_o$ and $h_b$ on $Y$ and $N$, we similarly define the \textit{na\"{\i}ve gluing of the tensors}:
    \begin{equation}\label{naive gluing tensors}
        h_o\#_{p,t}h_b :=  \Upsilon(t^{-\frac{1}{4}}\yha_{\eucl})\Psi_\infty^*h_b + \Big(1-\Upsilon(t^{-\frac{1}{4}}\yha_{\eucl})\Big)\Phi^*h_o
    \end{equation}
\end{defn}

%

 Let $(Y,g_\infty)$ be an Einstein orbifold, and $(M,g^D)$ a na\"{\i}ve desingularization of $(Y,g_\infty)$ by a tree of ALE Ricci-flat orbifolds $(N_j,g_{b_j})$ glued at scales $T_j>0$. 

Here, the manifold $M$ is also covered as $M = Y^t\cup \bigcup_jN_j^t$, where
\begin{equation}\label{def Mot}
    Y^t: = Y  -  \Big(\bigcup_k \Phi_k(B_\eucl(0,t_k^{\frac{1}{4}})) \Big),
\end{equation}
 where $t_k>0$ is the relative gluing scale of $N_k$ at the singular point $p_k\in Y$, and where 
 \begin{equation}\label{def Njt}
     N_j^t:= \Big(N_j -  \Psi_\infty \Big((\mathbb{R}^4\slash \Gamma_\infty) -  B_\eucl(0, 2t_j^{-\frac{1}{4}}) \Big)\Big)  -  \Big(\bigcup_k \Psi_k(B_\eucl(0,t_k^{\frac{1}{4}}) \Big).
 \end{equation}
On $Y^{16t}\subset Y^t$, we have $g^D = g_\infty$ and on each $N_j^{16t}\subset N_j^t$, we have $g^D = T_j g_{b_j}$. We also define $t_{\max}:= \max_j t_j$. By Definition \ref{def naive desing}, on the intersection $N_j^t\cap Y^t$ we then have $\sqrt{T_j}r_{b_j} = \rad_o $, and on the intersection $N_j^t\cap N_k^t$, we have $\sqrt{T_j}r_{b_j} = \sqrt{T_k}r_{b_k}$.

\begin{defn}[Function $\rad_D$ on a na\"{\i}ve desingularization]\label{definition rD}
    On a na\"{\i}ve desingularization $(M,g^D)$, we define a function $\rad_D$ in the following way:
    \begin{enumerate}
        \item $\rad_D = \rad_o$ on $Y^t$,
        \item $\rad_D = \sqrt{T_j}r_{b_j}$ on each $N_j^t$.
    \end{enumerate}
    The function $\rad_D$ is then smooth on $M$.
\end{defn}

\begin{defn}[Expected topology]
   A  desingularization $(M,g^D_{t,v})$ of a conformally K\"ahler, Einstein orbifold $(X,g_\infty)$ is said to have  the {\em expected topology} if  the Ricci-flat  ALE spaces 
used to construct the    
 desingularization  are all  K\"ahler, in an orientation-compatible manner.
\end{defn}

Notice that the notion of expected topology therefore exactly  models the sort of singularity formation that  can arise from an admissible sequence in the  sense of Definition \ref{ranger}, and so
 in particular  only applies  
to singularities of {\sf type T} in the sense of Definition \ref{loner}. 


\section{Simplest Case of the Proof}
\label{simplest}

We will now illustrate our strategy by proving Theorems \ref{alpha} and \ref{beta} in the simple   case where  $(X,g_\infty )$ only has one singularity, and no bubble-trees arise.

\subsection*{Approximate Einstein-modulo-obstructions metrics}\label{section construction gA}

As was shown in \cite{ozuthese}, around a singular point $p$ of $X$, we can find a coordinate system on $(X,g_\infty )$ in which we have an expansion of  a solution of $\delta_{\eucl}(\Bar{g}_{v_\infty})=0$, for $\eucl$ the Euclidean metric, and
\begin{equation}
        \Bar{g}_{v_\infty} = {\eucl} + \mathfrak{H}_2+ \mathfrak{H}_3+ \mathfrak{H}_4+\mathsf{O}(\rad^5),\label{dvp gov}
\end{equation}
whose Ricci curvature satisfies  
\begin{equation}\label{eq gvo}
    \Ric(\Bar{g}_{v_\infty})-3\Bar{g}_{v_\infty} = \mathbf{w}_{v_\infty} =\mathfrak{w}_0+\mathfrak{w}_2+\mathsf{O}(\rad^2)\in \mathfrak{O}(g_\infty),
\end{equation}
 for a constant $2$-tensor $\mathfrak{w}_0$, and $\mathfrak{w}_2$ quadratic on $\mathbb{R}^4$
\begin{equation}
        \Bar{g}_{v_b} = {\eucl} + \mathfrak{H}^4+ \mathsf{O}(\rad_\eucl^{-8}). \label{dvp gEH}
    \end{equation}
Let $\Sigma:=p^{-1}(\{0\})$ for $p:N\mapsto \mathbb{C}^2/\Gamma$ the minimal resolution if $\Gamma < SU(2)$, or its image in the appropriate  quotient if $\Gamma\not < SU(2)$.

In the rest of the section, we will construct what we call an \textit{approximate obstruction space} (an approximate co-kernel of the linearization of the Einstein equation) and an approximate Einstein-modulo-obstructions desingularization.

\subsubsection*{Extension of obstructions}\label{section extension obstructions}

Let $\mathbf{w} = \mathfrak{w}_0+ \mathfrak{w}_2+ \mathsf{O}(\rad_o^3) \in \mathfrak{O}(g_\infty)$ with $|\mathfrak{w}_i|_{\eucl}\leqslant \rad_o^i$ and $ \mathfrak{o} = O^4 + \mathsf{O}(\rad_o^{-5})\in \mathfrak{O}(\Bar{g}_{v_b})$. 
As proven in \cite[Lemma 2.7]{ozu3}, there is a unique tensor $\mathfrak{w}_0$ on $N$ satisfying: for $\mathfrak{w}_0$ of \eqref{eq gvo},
\begin{empheq}[box=\fbox]{align}
 \phantom{\int}   P_{\bar{g}_{v_b}}w_0&= 0, \nonumber\\
        w_0 &= \mathfrak{w}_0 + \mathsf{O}(\rad^{-4}),
        \label{defw0}   
        \\
        \int_{\Sigma}\langle w_0&, \mho_{v_b}\rangle_{\bar{g}_{v_b}} da_{(\bar{g}_{v_b}|\Sigma)}= 0 \text{ for all  } \mho_{v_b}\in  \mathfrak{O}(\bar{g}_{v_b}). \nonumber
 \phantom{\int}
\end{empheq}
%
%
%
From \cite{biq1,ozu2}, for $\mathfrak{w}_2$ of \eqref{eq gvo}, there exists $w_2$,
\begin{empheq}[box=\fbox]{align}
 \phantom{\int}   P_{\bar{g}_{v_b}}w_2& \in \mathfrak{O}(\bar{g}_{v_b}), \nonumber \\
        w_2 &= \mathfrak{w}_2 + \mathsf{O}(\rad^{-2+\epsilon}),\; \; \forall\; \epsilon>0\\
        \int_{\Sigma}\langle w_2&, \mho_{v_b}\rangle_{\bar{g}_{v_b}} da_{(\bar{g}_{v_b}|\Sigma)}= 0 \text{ for all  } \mho_{v_b}\in  \mathfrak{O}(\bar{g}_{v_b}). \nonumber 
 \phantom{\int}
    \label{def-w2}
\end{empheq}
%
%
%
and a unique symmetric $2$-tensor $\mathfrak{o}^4$ on $X$ such that:
\begin{empheq}[box=\fbox]{align}
 \phantom{\int}     P_{\bar{g}_{v_o}}\mathfrak{o}^4&\in\mathfrak{O}(g_\infty),\nonumber \\
        \mathfrak{o}^4 &= O^4 + \mathsf{O}(\rad_o^{-2-\epsilon}),       \label{def-o2^4} \\
        \mathfrak{o}^4&\perp_{L^2(\bar{g}_{v_o})} \mathfrak{O}(g_\infty).\nonumber 
 \phantom{\int}
\end{empheq}

%
%
%
%
%
%

We then define our extensions of the (co)kernels of the building blocks.

\begin{defn}\label{def extension obst}
Let $\mathbf{w}$ and $\mathfrak{o}$ be as above. We define:
$$\tilde{\mathbf{w}} := \mathbf{w} \#_{p,t} (w_0 + t w_2), \text{ and}$$
$$\tilde{\mathfrak{o}} := (t^2\mathfrak{o}^4)\#_{p,t} \mathfrak{o}.$$
\end{defn}


\subsubsection*{Approximate Einstein-modulo-obstructions metrics}
\label{section first def approx}
    
        For a metric $g$, let us denote $Q_g^{(2)}$ the bilinear terms given by the second derivative of $\mathscr{F}_g$ (whose linearization is $P_g$) at $g$. There exist:
         \begin{enumerate}
            \item  \emph{unique} symmetric $2$-tensors $h_2$ on $N$ and $\mho_2\in \mathfrak{O}(\bar{g}_{v_b})$ satisfying the  equations   
           \begin{empheq}[box=\fbox]{align}
 \phantom{\int}       P_{\bar{g}_{v_b}}h_2&= \lambda \bar{g}_{v_b}+w_0+\mho_{2},\nonumber\\
        h_2 &= \mathfrak{H}_2 + \mathfrak{H}_2^4 + \mathsf{O}(\rad^{-6+\epsilon}), \; \; \forall\; \epsilon>0    \label{def-lambda-k}    \\
        \int_{\Sigma}\langle h_2&, \mho_{v_b}\rangle_{\bar{g}_{v_b}} da_{(\bar{g}_{v_b}|\Sigma)}= 0 \text{ for all  } \mho_{v_b}\in  \mathfrak{O}(\bar{g}_{v_b}). \nonumber 
    \end{empheq}           
        %
%
%
%
           \item a unique symmetric $2$-tensor $h_3$ on $N$ satisfying the following equations
  \begin{empheq}[box=\fbox]{align}
 \phantom{\int}      P_{\bar{g}_{v_b}}h_3&= 0, \nonumber \\
        h_3 &=  \mathfrak{H}_3 +  \mathfrak{H}_3^4 + \mathsf{O}(\rad^{-5+\epsilon}),\; \; \forall\; \epsilon>0
         \label{def-h3}     
        \\
        \int_{\Sigma}\langle h_3&, \mho_{v_b}\rangle_{\bar{g}_{v_b}} da_{(\bar{g}_{v_b}|\Sigma)}= 0 \text{ for all  } \mho_{v_b}\in \mathfrak{O}(\bar{g}_{v_b}).\nonumber 
    \end{empheq}               
           
%
%
%
        
        \item a unique symmetric $2$-tensor $\mathfrak{o}_{(2)}^4$ on $X$ such that:
       \begin{empheq}[box=\fbox]{align}
 \phantom{\int}      P_{\bar{g}_{v_o}}\mathfrak{o}_{(2)}^4&\in\mathfrak{O}(g_\infty), \nonumber\\
        \mathfrak{o}_{(2)}^4 &= O_{(2)}^4 + \mathsf{O}(\rad_o^{-2-\epsilon}),   \label{def-o2^4*}         
        \\
        \mathfrak{o}_{(2)}^4&\perp_{L^2(\bar{g}_{v_o})} \mathfrak{O}(g_\infty), \nonumber 
    \end{empheq}

%
where $\mho_2 =O_{(2)}^4 + \mathsf{O}(\rad_b^{-5})$ in \eqref{def-lambda-k}.

            \item a unique symmetric $2$-tensor $h^4$ on $X$ and $\mathbf{w}^4\in \mathfrak{O}(g_\infty)$ such that:

            \begin{empheq}[box=\fbox]{align}
 \phantom{\int}      P_{\bar{g}_{v_o}}h^4&=\mathfrak{o}_{(2)}^4+\mathbf{w}^4, \nonumber    \\
        h^4 &=  \mathfrak{H}^4 +  \mathfrak{H}^4_2 +  \mathfrak{H}^4_3+  \mathfrak{H}^4_4+ \mathsf{O}(\rad^{1-\epsilon}),   ~   \label{def-h^4}
        \\
        h^4&\perp_{L^2(\bar{g}_{v_o})} \mathfrak{O}(g_\infty).
 \nonumber  
    \end{empheq}


      \item and a symmetric $2$-tensor $h_4$ on $N$ and $\mho_4\in\mathfrak{O}(\bar{g}_{v_b})$ satisfying the following equations
         
            \begin{empheq}[box=\fbox]{align}
 \phantom{\int}      P_{\bar{g}_{v_b}}h_4&= \lambda h_2- Q_{\bar{g}_{v_b}}^{(2)}(h_2,h_2)+w_2+\mho_{4}, \nonumber    \\
       h_4 &=  \mathfrak{H}_4+  \mathfrak{H}^4_4 + \mathsf{O}(\rad^{-4+\epsilon}), ~  \label{def-mu-k}
        \\
       \int_{\Sigma}\langle h_4&, \mho_{v_b}\rangle_{\bar{g}_{v_b}} da_{(\bar{g}_{v_b}|\Sigma)}= 0 \text{ for all  } \mho_{v_b}\in  \mathfrak{O}(\bar{g}_{v_b}).
%
%
%
 \nonumber  
    \end{empheq}

        for all $\epsilon>0$.
\end{enumerate}

\begin{proof}[Proof of the existence of the above tensors]
    The proof of the existence and uniqueness of solutions of \eqref{def-lambda-k}, \eqref{def-h^4} and \eqref{def-mu-k} are exactly as in \cite{ozu3}, which did not depend on any  peculiar features  of the Eguchi-Hanson metric.

    Let us first give a proof for the existence of $h_3$ satisfying \eqref{def-h3}. Here
    the $2$-tensor $\mathfrak{H}_3$ satisfies $P_{\eucl}\mathfrak{H}_3 = 0$ on $\mathbb{R}^4$ so $P_{\bar{g}_{v_b}}(\Upsilon \mathfrak{H}_3) = \mathsf{O}(\rad_b^{-3})$ on $N$ since $\bar{g}_{v_b}-\eucl=\mathsf{O}(\rad_b^{-4})$. From \cite[Lemma 4.3]{ozu2}, it is  possible  to find $h'$ such that $P_{\bar{g}_{v_b}}h'= -P_{\bar{g}_{v_b}}(\Upsilon \mathfrak{H}_3)$ if and only if $ P_{\bar{g}_{v_b}}(\Upsilon \mathfrak{H}_3)\perp_{\bar{g}_{v_b}}\mathfrak{O}(\bar{g}_{v_b})$. Now, by integration by parts: for $\bar{\mho}_{v_b}\in\mathfrak{O}(\bar{g}_{v_b})$,
    $$\int_M\Big\langle P_{\bar{g}_{v_b}}(\Upsilon \mathfrak{H}_3),\bar{\mho}_{v_b}\Big\rangle_{\bar{g}_{v_b}}d\mu_{\bar{g}_{v_b}} = 0.$$
    Indeed, the boundary term is proportional to $ \lim_{r\to\infty} r \int_{\mathbb{S}^3/\Gamma}\langle \mathfrak{H}_3, \mathfrak{H}^4 \rangle_{g_{\mathbb{S}^3/\Gamma}}d\mu_{g_{\mathbb{S}^3/\Gamma}}$, but the coefficients of $\mathfrak{H}_3$ are harmonic in $\rad^3$ while the coefficients of $\mathfrak{H}^4$ are harmonic in $\rad^{-4}$, hence their restrictions on $\mathbb{S}^3/\Gamma$ belong to different eigenspaces of the spherical Laplacian and therefore are orthogonal:
    $$\int_{\mathbb{S}^3/\Gamma}\langle \mathfrak{H}_3, \mathfrak{H}^4 \rangle_{g_{\mathbb{S}^3/\Gamma}}d\mu_{g_{\mathbb{S}^3/\Gamma}}=0.$$
    Here, the existence of $\mathfrak{H}_3^4$ is proved exactly  as in the proof of the existence of $\mathfrak{H}_2^4$ in \cite[Lemma 2.3]{ozu3}.
\end{proof}

We now define the  approximate-solution metrics we will need.

\begin{defn} 
\label{defn approximate einstein mod obst}
    When $v=(v_o,v_b)\in \mathfrak{O}(g_\infty)\times \mathfrak{O}(g_b)$  and $t$ are both sufficiently small, we use the conventions of Definition \ref{def naive desing}
to  define the \textit{approximate Einstein-modulo-obstructions metric} as the na\"{\i}ve desingularization 
  
    $$g^A_{t,v} := (\bar{g}_{v_o} + t^2h^4) \#_{p,t} (\bar{g}_{v_b}+th_2+t^{3/2}h_3+t^2h_4).$$
We  define $\tilde{\mathfrak{O}}(g^A_{t,v})$ as the set of $\tilde{\mathbf{w}} + \Tilde{\mathfrak{o}}$ as in Definition \ref{def extension obst} for 
$\mathbf{w}\in \mathfrak{O}(g_\infty)$ and $\mathfrak{o}\in  \mathfrak{O}(\bar{g}_{v_b})$, 
   and single out a specific  associated approximate  obstruction:
    $$ \mho^A_{t,v} := \tilde{\mathbf{w}}_{v_o}+ t^2\tilde{\mathbf{w}}^4 +t\mho_2+t^2\mho_4\in \tilde{\mathfrak{O}}(g^A_{t,v}).$$
\end{defn}

\subsection*{Control of the approximate solutions}
 The metrics $g^A_{t,v}$ solve  the {\em Einstein-modulo-obstructions} equation in a manner that allows us to 
 naturally extend   the estimate \cite[equation (50)]{ozu3}  as follows: 
 \begin{prop}\label{controle desing un point}
For $0<\beta<\frac{1}{4}$ and $0<\alpha<1$, we have 
\begin{equation}\label{controlled-Einstein-modulo-obstructions}
    \| \mathscr{F}_{g^D_{t,v}}(g^A_{t,v}) + \mho^A_{t,v} \|_{\rad_D^{-2}C^{\alpha}_\beta(g^D_t)} = o(t)
\end{equation}
as $ t\to 0$, while $\|g^A_{t,v}- g^D_{t}\|_{C^{2,\alpha}_{\beta,*}(g^D_{t})} \to 0$ as  $(t,v)\to (0,0)$.
\end{prop}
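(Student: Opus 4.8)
\textbf{Proof plan for Proposition \ref{controle desing un point}.}

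The plan is to estimate $\mathscr{F}_{g^D_{t,v}}(g^A_{t,v})+\mho^A_{t,v}$ region by region on the naïve desingularization $M = Y^t\cup N^t$, using the fact that $g^A_{t,v}$ has been built precisely so that its defining tensors $h_2,h_3,h_4,h^4$ and the obstruction extensions kill the leading terms of $\mathscr{F}$ in the matching annulus. First I would recall that $\mathscr{F}_{g}$ is an analytic second-order operator in $g$, with linearization $P_g$ and quadratic remainder $Q^{(2)}_g$, so that on a metric of the form $\bar g + \sum_i t^{a_i} k_i$ we have a Taylor expansion $\mathscr{F}(\bar g + \sum t^{a_i}k_i) = \mathscr{F}(\bar g) + \sum t^{a_i}P_{\bar g}k_i + (\text{bilinear } Q^{(2)}\text{-terms}) + (\text{higher order})$. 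On the deep bubble region $N^{16t}$, where $g^A_{t,v} = \bar g_{v_b} + th_2 + t^{3/2}h_3 + t^2 h_4$ up to rescaling by $t$, one plugs this in: the $O(1)$ term vanishes since $\mathscr{F}_{g_b}(\bar g_{v_b})=0$; the $O(t)$ term is $P_{\bar g_{v_b}}h_2$, which by \eqref{def-lambda-k} equals $\lambda\bar g_{v_b}+w_0+\mho_2$ — exactly the terms that, after the rescaling bookkeeping, are absorbed into $\mho^A_{t,v}$ and the orbifold-side contributions; the $O(t^{3/2})$ term is $P_{\bar g_{v_b}}h_3 = 0$ by \eqref{def-h3}; and the $O(t^2)$ terms combine $P_{\bar g_{v_b}}h_4$ with the bilinear $Q^{(2)}_{\bar g_{v_b}}(h_2,h_2)$ and $\lambda h_2$, which \eqref{def-mu-k} was designed to match, up to $w_2+\mho_4$. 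So on $N^{16t}$ the error is $o(t)$ in the appropriate weighted norm. Symmetrically, on the orbifold region $Y^{16t}$, where $g^A_{t,v} = \bar g_{v_o} + t^2 h^4$, one uses $\mathscr{F}_{g_\infty}(\bar g_{v_o}) = \mathbf{w}_{v_o}\in\mathfrak{O}(g_\infty)$ together with \eqref{def-h^4} to see the $O(t^2)$ correction is absorbed into $t^2\tilde{\mathbf w}^4$ and $t^2\mathfrak o^4_{(2)}$, again leaving an $o(t)$ remainder since $t^2 = o(t)$.

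The genuinely delicate step is the \emph{gluing annulus} $\mathcal{A}(t,\epsilon) = A_{\eucl}(\epsilon^{-1}\sqrt t, 2\epsilon)$, where $g^A_{t,v}$ is a cutoff interpolation between the rescaled bubble metric and the orbifold metric, and where $\rad_D$ ranges over $[\epsilon^{-1}\sqrt t, 2\epsilon]$. Here the error has two sources: the commutator of $\mathscr{F}$ with the cutoff $\Upsilon(t^{-1/4}\rad_{\eucl})$, which produces terms supported where $\rad_{\eucl}\sim t^{1/4}$ involving first and second derivatives of $\Upsilon$ hitting the \emph{difference} of the two glued metrics and tensors; and the mismatch between the asymptotic expansion \eqref{dvp gov} of $\bar g_{v_o}$ near $p$ and the asymptotic expansion \eqref{dvp gEH} of $\bar g_{v_b}$ near infinity. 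The whole point of matching the expansions term by term — $\eucl + \mathfrak H_2 + \mathfrak H_3 + \mathfrak H_4$ on the orbifold side against $\eucl + \mathfrak H^4 + \cdots$ on the bubble side, with $h_2$ carrying $\mathfrak H_2 + \mathfrak H_2^4$, $h_3$ carrying $\mathfrak H_3+\mathfrak H_3^4$, $h^4$ carrying $\mathfrak H^4 + \mathfrak H^4_2 + \mathfrak H^4_3 + \mathfrak H^4_4$, etc. — is that on $\mathcal{A}(t,\epsilon)$ the two approximate metrics agree to the order needed so that the cutoff-commutator terms, weighted by $\rad_D^{-2}C^\alpha_\beta$ with $0<\beta<\tfrac14$, are $o(t)$. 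Concretely, at $\rad_{\eucl}\sim t^{1/4}$ one has $\rad_D\sim t^{1/4}$, and a term of the form $\rad_D^{-k}$ contributes a factor $t^{-k/4}$; tracking that the leading \emph{unmatched} term is of order $\rad^5$ (the $\mathsf O(\rad^5)$ in \eqref{dvp gov}) or $\rad_{\eucl}^{-8}$ (the $\mathsf O(\rad_\eucl^{-8})$ in \eqref{dvp gEH}), and that the cutoff derivatives cost at most $t^{-1/4}$ each, one checks the product lands strictly below $t^1$ in the weighted norm. This is where the constraint $\beta < \tfrac14$ enters, ensuring the weight does not overwhelm the gain.

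The remaining steps are bookkeeping: assemble the three regional estimates into a global bound on $\|\mathscr{F}_{g^D_{t,v}}(g^A_{t,v}) + \mho^A_{t,v}\|_{\rad_D^{-2}C^\alpha_\beta(g^D_t)}$, noting that the background metric in the norm is $g^D_t$ (not $g^A_{t,v}$), so one must also control $\|g^A_{t,v} - g^D_t\|_{C^{2,\alpha}_{\beta,*}(g^D_t)}\to 0$ — but this follows directly from the definition of $g^A_{t,v}$ as $g^D_{t,v}$ plus the tensors $t^2 h^4$, $th_2$, $t^{3/2}h_3$, $t^2h_4$ (each of which, in its natural weighted norm, is bounded), plus the difference $\bar g_{v_o}-g_\infty$ and $\bar g_{v_b}-g_b$ which are $O(\|v\|^2)$, all tending to $0$ as $(t,v)\to(0,0)$. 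I expect the main obstacle to be the gluing-annulus estimate, specifically verifying that every unmatched term in the combined expansion — including the $Q^{(2)}$ cross-terms between the orbifold-side and bubble-side corrections that only appear in the transition region — is controlled by $o(t)$ in the weighted Hölder norm; this is exactly the kind of estimate carried out in \cite[equation (50)]{ozu3} for the Eguchi--Hanson case, and as noted in the proof sketch above the argument there used no special feature of Eguchi--Hanson beyond the structure of the expansions, so the extension is a matter of re-running it with the tensors \eqref{def-lambda-k}--\eqref{def-mu-k} in place.
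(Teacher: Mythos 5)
Your approach matches the paper's proof: the crux is the term-by-term matching of the two asymptotic expansions in the gluing annulus $t^{1/4}<\rad_o<2t^{1/4}$, with the leading unmatched error coming from the $\mathsf{O}(\rad^5)$ remainder, which after cutoff and weighting yields $\mathsf{O}(t^{5/4}) = o(t)$ for $\beta < 1/4$. The paper likewise concentrates its argument on the annulus and defers the interior orbifold and bubble regions to \cite{biq1,ozu3,ozu2}, precisely as you sketch.
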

\begin{proof}
    The proof of this  estimate is very similar to those in \cite{biq1,ozu2,ozu3}. We will focus here on
     the most challenging aspect of the proof, which pertains to the region $t^{\frac{1}{4}}<\rad_o<2t^{\frac{1}{4}}$ or $t^{-\frac{1}{4}}<\rad_b<2t^{-\frac{1}{4}}$
     where the cut-off is used.  Here one has 
    $$ \bar{g}_{v_o} + t^2h^4 = \eucl + \mathfrak{H}_2+\mathfrak{H}_3+\mathfrak{H}_4 +t^2\mathfrak{H}^4 + t^2\mathfrak{H}^4_2 + t^2\mathfrak{H}^4_3+ t^2\mathfrak{H}^4_4 + \mathsf{O}(\rad^5)+ \mathsf{O}(\rad_o^{5-\epsilon}\rad_b^{-4}) $$
    \begin{eqnarray*}
 \phi_{\sqrt{t},*}(\bar{g}_{v_b}+th_2+t^{3/2}h_3+t^2h_4) &=& \eucl + t^2\mathfrak{H}^4 + \mathfrak{H}_2 + t^2 \mathfrak{H}_2^4 + \mathfrak{H}_3 + t^2 \mathfrak{H}^4_3 
 \\&&+ \mathfrak{H}_4 + t^2\mathfrak{H}^4_4 + \mathsf{O}(\rad_o^{2}\rad_b^{-8+\epsilon}) +\mathsf{O}(\rad_b^{-8})\end{eqnarray*}
for all $\epsilon>0$, where $\phi_{\sqrt{t},*}$
 is defined as in Section \ref{reecriture-controle}.
 Thus, in the gluing annuli $t^{\frac{1}{4}}<\rad_o<2t^{\frac{1}{4}}$ and $t^{-\frac{1}{4}}<\rad_b<2t^{-\frac{1}{4}}$ , the difference $$\rad_D^l\left|\nabla_{g^D_t}^l\left((\bar{g}_{v_o} + t^2h^4)-\phi_{\sqrt{t},*}(\bar{g}_{v_b}+th_2+t^{3/2}h_3+t^2h_4)\right)\right|,$$ 
is, for $0<\beta<\frac{1}{4}$,  of uniform size 
$$ \mathsf{O}(\rad_o^{2}\rad_b^{-8+\epsilon}) + \mathsf{O}(\rad_b^{-8}) + \mathsf{O}(\rad^5)+ \mathsf{O}(\rad_o^{5-\epsilon}\rad_b^{-4}) = \mathsf{O}(t^{\frac{5}{4}})=t^{\frac{\beta}{4}}o(t).$$
    Together with the controls on the cut-off function, this shows that \eqref{controlled-Einstein-modulo-obstructions} is satisfied in the annulus $t^{\frac{1}{4}}<\rad_o<2t^{\frac{1}{4}}$ or $t^{-\frac{1}{4}}<\rad_b<2t^{-\frac{1}{4}}$. The proof for the other regions is easier, and can be found in \cite{biq1,ozu3,ozu2}.
\end{proof}

\subsection*{Curvature of the approximate solutions}

Let $x^i$ be canonical coordinate functions on $\mathbb{R}^4$ and $\omega_1^0 = dx^1\wedge dx^2 + dx^3\wedge dx^4$ and $\theta_1^0 = dx^1\wedge dx^2 - dx^3\wedge dx^4$, and similarly $\omega_i^0$ and $\theta_i^0$ for $i\in\{1,2,3\}$. On $g_b$, we denote $\omega_i$ the hyper-K\"ahler forms asymptotic to $\omega_i^0$.

The following proofs rely on the isomorphism between $\Lambda^-_{g_b}\otimes\Lambda^+_{g_b}$ and traceless $2$-tensors obtained as follows: identifying $2$-tensors and endomorphisms of $TM$, the composition of endomorphisms $\omega^-\circ \omega^+$ is symmetric traceless.

\begin{lem}\label{description-W_0}
    Let $\mathfrak{w}_0$ be a constant $2$-tensor on $\mathbb{R}^4$ projected on $\mathbb{R}^4/\Gamma$ for $\Gamma\subset SU(2)$.
 Then, for $i\in \{1,2,3\}$ there exists $\Theta_i\in \Lambda^-_{g_b}$ satisfying $d\Theta_i=0$ such that $\mathfrak{w}_0$ of \eqref{defw0} is equal to $\sum_i \Theta_i\circ \omega_i$.
\end{lem}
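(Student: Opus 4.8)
The plan is to turn the three defining properties of $w_0$ in \eqref{defw0} into statements about anti-self-dual forms, using the fact that $(N,g_b)$ is hyper-K\"ahler and that $P_{\bar g_{v_b}}$ is (half) the Lichnerowicz Laplacian of an ALE Ricci-flat metric. First I would recall the linear-algebra isomorphism highlighted just before the statement: composing a point-wise anti-self-dual endomorphism $\omega^-$ with a self-dual endomorphism $\omega^+$ yields a traceless symmetric endomorphism, and conversely $\Lambda^-_{g_b}\otimes\Lambda^+_{g_b}\cong \Sym_0(TN)$ as a bundle. Under this isomorphism, $\Lambda^-\otimes\omega_i$ sits inside the traceless symmetric $2$-tensors for each hyper-K\"ahler form $\omega_i$, $i\in\{1,2,3\}$; and the Weitzenb\"ock/Lichnerowicz operator $P_{g_b}$ acting on sections of $\Lambda^-\otimes\omega_i$ reduces to (a constant times) the rough Laplacian acting on the $\Lambda^-$ factor, because the $\omega_i$ are parallel and the Ricci curvature vanishes, so the only curvature term that survives is the one coupling to the anti-self-dual bundle via $W^-$. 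In other words, $P_{g_b}(\Theta\circ\omega_i)=(\Delta_{g_b}\Theta)\circ\omega_i$ up to normalization, where $\Delta_{g_b}$ is the Hodge Laplacian on $2$-forms restricted to $\Lambda^-$; and on an ALE hyper-K\"ahler $4$-manifold a decaying anti-self-dual $2$-form with $\Delta_{g_b}\Theta=0$ is closed (indeed harmonic), hence $d\Theta=0$.

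The core of the argument is then a \emph{model-then-correct} scheme. A constant traceless symmetric $2$-tensor $\mathfrak{w}_0$ on $\RR^4$ decomposes under the isomorphism $\Sym_0(\RR^4)\cong\Lambda^-_{\eucl}\otimes\Lambda^+_{\eucl}$; since the flat $\theta_i^0$ form a basis of $\Lambda^-_{\eucl}$ and the $\omega_i^0$ a basis of $\Lambda^+_{\eucl}$, one can write $\mathfrak{w}_0=\sum_{i}\Theta_i^0\circ\omega_i^0$ for \emph{constant} anti-self-dual forms $\Theta_i^0$ on $\RR^4$ — wait, more precisely one gets a sum $\sum_{ij}c_{ij}\,\theta_i^0\circ\omega_j^0$, which after regrouping is $\sum_j\Theta_j^0\circ\omega_j^0$ with each $\Theta_j^0\in\Lambda^-_{\eucl}$ constant. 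These constant forms descend to $\RR^4/\Gamma$ because $\Gamma\subset SU(2)$ acts trivially on $\Lambda^+$ and preserves $\Lambda^-_{\eucl}$ only through its $SO(3)$-action; so one should instead start from the $\Gamma$-invariance of $\mathfrak{w}_0$ and note that the $\Gamma$-invariant part of $\Lambda^-_{\eucl}$ together with the (automatically $\Gamma$-invariant) $\omega_j^0$ still spans the $\Gamma$-invariant traceless symmetric tensors that are built from $\omega_j^0$ on the right. Extend each constant $\theta$-piece to the closed anti-self-dual harmonic form on $N$ with the prescribed constant leading term — this is exactly the content of the decaying-harmonic-$\Lambda^-$ theory on ALE hyper-K\"ahler manifolds, with the error being $\mathsf{O}(\rad^{-3})$ or better — and pair it with the global $\omega_j$. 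The resulting tensor $\sum_j\Theta_j\circ\omega_j$ satisfies $P_{\bar g_{v_b}}(\sum_j\Theta_j\circ\omega_j)=0$ (the $v_b$-correction and the difference $g_b$ vs.\ $\bar g_{v_b}$ only change things by faster-decaying terms, which are then absorbed by uniqueness) and has leading term $\mathfrak{w}_0$. Finally, the third line of \eqref{defw0}, orthogonality to $\mathfrak{O}(\bar g_{v_b})$ over $\Sigma$, is arranged by subtracting the appropriate element of $\mathfrak{O}(\bar g_{v_b})$; but by the uniqueness clause in \eqref{defw0} this corrected tensor \emph{is} $w_0$, and one checks that the correction term also lies in $\bigoplus_i\Lambda^-\otimes\omega_i$ (the obstruction space $\mathfrak{O}(\bar g_{v_b})$ of an ALE hyper-K\"ahler metric consists of such tensors, by the known description of infinitesimal hyper-K\"ahler deformations), so the conclusion $w_0=\sum_i\Theta_i\circ\omega_i$ with $d\Theta_i=0$ survives.

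I expect the main obstacle to be the bookkeeping around three points that are easy to state but delicate to make rigorous: (1) pinning down precisely which constant anti-self-dual forms on $\RR^4/\Gamma$ make sense — i.e.\ the interplay between $\Gamma$-invariance, the $SU(2)$-versus-$SO(3)$ action on $\Lambda^\pm$, and the fact that $\mathfrak{w}_0$ is only required to be defined after projection to $\RR^4/\Gamma$; (2) verifying that $P_{\bar g_{v_b}}$ genuinely preserves the subbundle $\bigoplus_i\Lambda^-_{g_b}\otimes\omega_i$ up to acceptably-decaying error, which rests on the parallelism of the $\omega_i$ and Ricci-flatness but must be checked against the explicit form \eqref{eq: def P} of $P$ and the asymptotics $\bar g_{v_b}=\eucl+\mathfrak{H}^4+\mathsf{O}(\rad_\eucl^{-8})$; and (3) citing the correct structural description of $\mathfrak{O}(\bar g_{v_b})$ so that the final $L^2$-orthogonalization step does not take us out of the $\Lambda^-\otimes\omega_i$ subspace. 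Steps (2) and (3) are where I would lean hardest on \cite{biq1,ozu2,ozu3} and on the standard theory of ALE hyper-K\"ahler deformations; the Weitzenb\"ock computation in (2) is the true analytic heart of the lemma.
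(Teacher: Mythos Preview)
Your proposal is correct in spirit but runs in the reverse direction from the paper. You construct closed anti-self-dual forms $\Theta_i$ with prescribed constant asymptotics and then invoke the uniqueness clause of \eqref{defw0} to identify $\sum_i\Theta_i\circ\omega_i$ with $w_0$. The paper instead starts from the already-given $w_0$, decomposes it pointwise as $w_0=\sum_i\Theta_i\circ\omega_i$ (pure linear algebra, since the $\omega_i$ are a parallel global frame for $\Lambda^+_{g_b}$), reads off $dd^*\Theta_i=0$ from $P_{\bar g_{v_b}}w_0=0$ via the same Weitzenb\"ock-type identity you describe, and then finishes with a one-line integration by parts: the $\mathsf{O}(\rad^{-4})$ decay of $w_0-\mathfrak{w}_0$ in \eqref{defw0} gives $d^*\Theta_i\wedge\Theta_i=\mathsf{O}(\rad_b^{-5})$, so the boundary term vanishes, $d^*\Theta_i=0$, and anti-self-duality yields $d\Theta_i=0$.

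The paper's route is shorter precisely because it sidesteps all three obstacles you flag: no harmonic extension needs to be constructed, no condition of \eqref{defw0} needs to be re-verified, and the internal structure of $\mathfrak{O}(\bar g_{v_b})$ never enters. Your approach is workable, but the existence of a \emph{closed} ASD extension with the right leading term (as opposed to a merely harmonic one, for which closedness on a noncompact ALE space is not automatic) and the check that the $\mathfrak{O}$-correction stays inside $\bigoplus_i\Lambda^-\otimes\omega_i$ are genuine additional steps that the decompose-then-integrate argument simply does not require.
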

\begin{proof}
    Let $\mathfrak{w}_0$ be a $2$-tensor on $g_b$ obtained from \cite[Lemma 2.7]{ozu3}  asymptotic to a constant traceless $2$-tensor $\mathfrak{w}_0 = \sum_{i,j} \mathfrak{w}_{0,ij} \theta_j^0\otimes\omega_i^0$ for $\mathfrak{w}_{0,ij} \in \mathbb{R}$, and the above $\theta_j^0$ and $\omega_i^0$ satisfying $d\theta_i^0=0$ and $d\omega_i^0= 0$. Since $\mathfrak{w}_0$ is traceless, there are anti-self-dual  $2$-forms $\Theta_i$ such that $\mathfrak{w}_0 = \sum_i \Theta_i\circ \omega_i$, $dd^*\Theta_i = 0$, and $\Theta_i = \sum_{j} W_{0,ij} \theta_j^0 +\mathsf{O}(\rad^{-4})$. By integration by parts, this yields:
    \begin{align*}
        0&=\int_M \langle dd^*\Theta_i, \Theta_i\rangle_{g_b}d\mu_{g_b}\\
        &=-\int_M dd^*\Theta_i \wedge \Theta_i\\
        &=-\int_M d(d^*\Theta_i \wedge \Theta_i) + d^*\Theta_i \wedge d\Theta_i\\
        &=-\lim_{\rho\to \infty}\int_{\{\rad_b=\rho\}} d^*\Theta_i \wedge \Theta_i\\
        &\;\;+\int_M |d^*\Theta_i|^2d\mu_{g_b}.
    \end{align*}
    Now, $d^*\Theta_i \wedge \Theta_i = \mathsf{O}(\rad_b^{-5})$ so the boundary term vanishes, and $d^*\Theta_i=0$. Since $\Theta_i$ is anti-self-dual , this implies $d\Theta_i=0$.
\end{proof}

Let us now extend \cite[Lemma 8]{biq2} to the  general setting  in which the obstructions do not all happen to vanish.
\begin{lem}\label{self-dual-curvature}
    Let $g_b$ be an ALE Ricci-flat  K\"ahler  metric, and $h_2$ as in \eqref{def-lambda-k}.
    Then, the first order variation 
    $$\mathcal{R}^{+,(1)}_{g_b}(h_2)$$
    in the direction of $h_2$  of the self-dual  curvature $\mathcal{R}^+$ of $g_b$  has constant eigenvalues equal to those of $\mathcal{R}_{\bar{g}_{v_o}}^+(p)$. 
\end{lem}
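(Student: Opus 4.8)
\textbf{Proof strategy for Lemma \ref{self-dual-curvature}.}

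The plan is to compare the linearized self-dual curvature operator on the ALE side with the corresponding object on the orbifold side, exploiting the fact that, in the region where the gluing takes place, $h_2$ is asymptotic to the model $2$-tensor $\mathfrak{H}_2$ coming from the expansion \eqref{dvp gov} of $\bar{g}_{v_o}$. First I would recall the general formula for the first variation of the Riemann curvature under a divergence-free, traceless variation $h$: modulo the Ricci terms (which are controlled since $g_b$ is Ricci-flat and $h_2$ satisfies \eqref{def-lambda-k}), $\mathcal{R}^{(1)}_{g_b}(h)$ is a second-order constant-coefficient operator applied to $h$, and its self-dual part $\mathcal{R}^{+,(1)}_{g_b}(h_2)$ is a section of $\Sym(\Lambda^+_{g_b})$. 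The key structural input is the identification, flagged just before the statement, of the traceless symmetric $2$-tensors with $\Lambda^-_{g_b}\otimes\Lambda^+_{g_b}$ via $\omega^-\mapsto \omega^-\circ\omega^+$; combined with Lemma \ref{description-W_0}, which writes the leading obstruction term in exactly this $\sum_i\Theta_i\circ\omega_i$ form, this lets me track how the asymptotically-constant part of $h_2$ feeds into $\mathcal{R}^{+,(1)}$.

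The main steps, in order, would be: (1) write $h_2 = \mathfrak{H}_2 + \mathfrak{H}_2^4 + \mathsf{O}(\rad^{-6+\epsilon})$ as in \eqref{def-lambda-k}, and observe that $\mathfrak{H}_2$ is the quadratic piece of the \emph{orbifold-side} Einstein-modulo-obstruction metric $\bar{g}_{v_o}$ pulled back to $\RR^4/\Gamma$, hence $\mathcal{R}^{+,(1)}_{\eucl}(\mathfrak{H}_2)$ is literally the constant self-dual curvature $\mathcal{R}^+_{\bar{g}_{v_o}}(p)$ evaluated at the singular point, because the remaining terms in the expansion of $\bar{g}_{v_o}$ are either higher order (vanishing at $p$) or absorbed. (2) Show that the correction $\mathfrak{H}_2^4$ and the error $\mathsf{O}(\rad^{-6+\epsilon})$ contribute nothing to the self-dual part: the tensor $\mathfrak{H}_2^4$ is, by construction (as in \cite[Lemma 2.3]{ozu3}), harmonic in negative powers of $\rad$ and, paired against the hyper-K\"ahler forms $\omega_i$ in the $\Lambda^-\otimes\Lambda^+$ decomposition, its self-dual contribution to the linearized curvature is forced to vanish by the same eigenspace-orthogonality argument used in the proof of Lemma \ref{description-W_0} together with the fact that $g_b$ is anti-self-dual ($W^+\equiv 0$, so $\Lambda^+_{g_b}$ is flat along $g_b$ and $\mathcal{R}^{+,(1)}$ sees only the $\omega_i$-components). (3) Conclude that $\mathcal{R}^{+,(1)}_{g_b}(h_2)$ is parallel — its expression in the frame $\{\omega_i\}$ has constant coefficients — and therefore has constant eigenvalues, which agree with those of $\mathcal{R}^+_{\bar{g}_{v_o}}(p)$ by step (1).

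The hard part will be step (2): controlling the self-dual piece of the linearized curvature coming from the \emph{genuinely ALE} part of $h_2$ (the $\mathfrak{H}_2^4$ correction and the decaying remainder), and in particular verifying that the equation $P_{\bar{g}_{v_b}}h_2 = \lambda\bar{g}_{v_b} + w_0 + \mho_2$ together with the anti-self-duality of $g_b$ forces $\mathcal{R}^{+,(1)}_{g_b}(h_2)$ to be covariantly constant rather than merely asymptotically constant. This is exactly where \cite[Lemma 8]{biq2} is being extended: in Biquard's setting the obstructions vanish, so $\mho_2 = 0$ and the argument is cleaner, whereas here one must feed the obstruction term $\mho_2 = O^4_{(2)} + \mathsf{O}(\rad_b^{-5})$ through the computation and check, via Lemma \ref{description-W_0}, that it too contributes only through the $\omega_i$-frame with constant coefficients and so does not disturb the conclusion. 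I would handle this by a Weitzenb\"ock-type / integration-by-parts argument on $\Lambda^+_{g_b}$ identical in spirit to the boundary-term computation in Lemma \ref{description-W_0}, using the decay rates in \eqref{def-lambda-k} to kill all boundary contributions.
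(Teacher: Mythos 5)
Your proposal correctly identifies the rough shape of the argument — expand $h_2$ around the model, show the leading constant piece captures $\mathcal{R}^+_{\bar g_{v_o}}(p)$, then argue that the rest does not move the eigenvalues — but it diverges from the paper's proof in two places, one of which is a genuine gap.

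First, a point you skip over: the $g_b$ in question is an ALE Ricci-flat \emph{K\"ahler} metric, not necessarily hyper-K\"ahler, since the type-T list includes quotients by $\Gamma\not\subset SU(2)$ for which only one orientation-compatible K\"ahler structure survives. You speak directly of ``the hyper-K\"ahler forms $\omega_i$'' on $g_b$, but in general these do not exist downstairs. The paper's first step is to pass to a finite cover on which $g_b$ becomes hyper-K\"ahler, noting the assertion is local; your proposal needs the same step before any reference to $\omega_1,\omega_2,\omega_3$ makes sense.

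Second, and more seriously, your step (2) does not establish what you need. You want to conclude that the pointwise tensor $\mathcal{R}^{+,(1)}_{g_b}(h_2)$ is parallel (hence has constant eigenvalues), and you propose to do so by ``a Weitzenb\"ock-type / integration-by-parts argument identical in spirit to the boundary-term computation in Lemma~\ref{description-W_0}.'' But the boundary-term computation in Lemma~\ref{description-W_0} produces a \emph{global integral} identity ($d^*\Theta_i=0$ from $\int|d^*\Theta_i|^2 = 0$); it cannot by itself give a pointwise statement like constancy of eigenvalues everywhere on $N$. Likewise, the decaying correction $\mathfrak{H}_2^4$ is genuinely nonzero in the bulk of $N$ and its contribution to $\mathcal{R}^{+,(1)}_{g_b}(h_2)$ does not ``vanish'' — what has to be shown is that the total, summing all pieces of $h_2$, lands in a constrained pointwise algebraic form. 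The paper does this not by estimating term by term but by invoking the pointwise structural mechanism of Biquard (\cite[Lemma~3 and Lemma~8]{biq2}): solving the linearized equation $P_{\bar g_{v_b}}h_2 = \lambda\bar g_{v_b} + w_0 + \mho_2$ forces $\mathring{\Ric}^{(1)}_{g_b}(h_2) = \sum_i \Omega_i\circ\omega_i$ with each $\Omega_i$ a \emph{closed} anti-self-dual $2$-form, asymptotically constant or decaying, and constancy of the $\mathcal{R}^{+,(1)}$ eigenvalues is then read off from this decomposition. The role of Lemma~\ref{description-W_0} in the paper is precisely to verify that the new inhomogeneous term $w_0$ arising from the nonvanishing obstruction has the required $\sum_i\Theta_i\circ\omega_i$ form, so that Biquard's mechanism still applies — not to kill any contributions.

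So: your intuition about where the difficulty lives (absorbing the obstruction $\mho_2$) is accurate, and your identification of Lemma~\ref{description-W_0} as the relevant auxiliary result is also correct, but the tool you reach for (a global integration by parts) is the wrong one. The correct move is the pointwise structural decomposition of the linearized curvature from Biquard's Lemmas~3 and~8, applied after passing to the hyper-K\"ahler cover.
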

\begin{proof}
  Since the assertion is essentially local in nature, we may pass, if necessary,   to a finite  cover of  the relevant ALE space, and thereby arrange for it to be 
   hyper-K\"ahler,  as in the proof of \cite[Corollary 6.4]{ozu2}.
    
    The proof of \cite[Lemma 8]{biq2} using \cite[Lemma 3]{biq2} now immediately extends  to show that $\mathring{\Ric}^{(1)}_{g_b}(h_2) = \sum_{i}\Omega_i\circ \omega_i$ for $\Omega_i$ anti-self-dual  $2$-forms satisfying $d\Omega_i=0$ and asymptotically constant (or decaying at infinity), and $\omega_i$ the three K\"ahler forms on $g_b$. This is exactly our situation by Lemma \ref{description-W_0}. 
\end{proof}

We can therefore precise our estimate of the curvature of these metrics: 
\begin{cor}\label{curvature-metrics}
    The self-dual Weyl curvature $W^+$ of $g^A_{t,v}$ at any point  has eigenvalues arbitrarily close to those of $W^+(g_\infty)$  at some corresponding  point of $X$, up to an error that tends to zero uniformly as $(t,v)\to 0$.
\end{cor}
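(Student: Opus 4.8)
The plan is to deduce Corollary~\ref{curvature-metrics} from the structural description of the approximate metric $g^A_{t,v}$ given in Definition~\ref{defn approximate einstein mod obst}, combined with Lemma~\ref{self-dual-curvature} and the uniform estimates of Proposition~\ref{controle desing un point}. The Weyl curvature $W^+$ is a pointwise algebraic expression in the metric and its $2$-jet, so on each of the two ``halves'' of the na\"{\i}ve desingularization one can read off $W^+$ directly from the expansions written down above. First I would treat the orbifold region: where $\rad_D = \rad_o$ is bounded away from $0$, the metric $g^A_{t,v}$ equals $\bar{g}_{v_o} + t^2 h^4$, which by \eqref{dvp gov} and \eqref{def-h^4} converges in $C^{2,\alpha}$ to $\bar{g}_{v_o}$, and $\bar{g}_{v_o}$ in turn converges to $g_\infty$ as $v_o \to 0$; continuity of the eigenvalues of $W^+$ under $C^{2,\alpha}$ convergence of metrics then gives the claim there. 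The genuinely delicate region is the ALE neck and the bubble itself, where $g^A_{t,v} = \bar{g}_{v_b} + t h_2 + t^{3/2} h_3 + t^2 h_4$ after rescaling; here $\bar{g}_{v_b}$ is Ricci-flat ALE K\"ahler with $W^+ \equiv 0$ (anti-self-duality), so to leading order in $t$ the self-dual Weyl curvature of $g^A_{t,v}$ is governed entirely by the first-order variation in the direction of the dominant perturbation term $t h_2$.

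The heart of the argument is then to invoke Lemma~\ref{self-dual-curvature}: the first-order variation $\mathcal{R}^{+,(1)}_{g_b}(h_2)$ of the self-dual curvature has \emph{constant} eigenvalues, equal to those of $\mathcal{R}^+_{\bar{g}_{v_o}}(p)$, the self-dual curvature of the orbifold metric at the singular point $p$. Since $\bar{g}_{v_o}$ is close to $g_\infty$ and the self-dual Weyl curvature of an Einstein metric coincides with its self-dual Riemann curvature up to the (constant, trace-part) scalar contribution, the eigenvalues of $W^+(\mathcal{R}^+_{\bar{g}_{v_o}}(p))$ are within $o(1)$ of those of $W^+(g_\infty)$ at $p$ --- and $p$, viewed inside $X$, is precisely the ``corresponding point'' in the statement. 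Rescaling $g_b \mapsto t g_b$ scales $W^+$ by $t^{-1}$, which exactly matches the scaling of the $t h_2$ term relative to $\bar{g}_{v_b}$, so the leading-order self-dual Weyl curvature on the bubble is, after the natural identification, asymptotically that of $W^+(g_\infty)$ at $p$; the remaining terms $t^{3/2}h_3$, $t^2 h_4$, the $O(t^2)$ corrections from $h^4$, and the quadratic-and-higher terms in the curvature expansion are all of strictly higher order in $t$ by the decay rates recorded in \eqref{def-h3}, \eqref{def-mu-k}, \eqref{def-h^4} and the ALE falloff $\bar{g}_{v_b} - \eucl = \mathsf{O}(\rad_b^{-4})$, hence contribute only $o(1)$ to the eigenvalues.

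The one technical point that needs care --- and which I expect to be the main obstacle --- is the transition region, i.e. the gluing annulus $t^{1/4} < \rad_o < 2t^{1/4}$, where the cut-off $\Upsilon$ interpolates between the two model metrics and where a priori the derivatives of $\Upsilon(t^{-1/4}\rad_{\eucl})$ could produce large curvature contributions of order $t^{-1/2}$. Here one uses exactly the estimate already established in the proof of Proposition~\ref{controle desing un point}: in that annulus the two pieces $\bar{g}_{v_o} + t^2 h^4$ and $\phi_{\sqrt{t},*}(\bar{g}_{v_b} + t h_2 + t^{3/2} h_3 + t^2 h_4)$ agree to order $\mathsf{O}(t^{5/4})$ together with two derivatives, which is far smaller than the scale $t^{1/2}$ at which $\Upsilon$ varies; consequently the cut-off is applied to a quantity that is negligible, and no spurious curvature is generated. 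Thus on all three regions --- orbifold part, bubble part, and neck --- the eigenvalues of $W^+(g^A_{t,v})$ at any point are within $o(1)$ (as $(t,v) \to 0$) of the eigenvalues of $W^+(g_\infty)$ at a corresponding point of $X$ (the image point on $X - \{p\}$ in the orbifold region, and the singular point $p$ itself throughout the neck and bubble), which is the assertion of the Corollary. I would organize the write-up as a short case analysis over these three regions, citing Lemma~\ref{self-dual-curvature} for the bubble, $C^{2,\alpha}$-continuity of the Weyl operator for the orbifold part, and Proposition~\ref{controle desing un point} for the transition.
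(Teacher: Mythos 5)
Your proposal is correct and mirrors the paper's proof: both decompose $M$ into an orbifold region, a gluing/transition region, and the bubble, both use $C^{2,\alpha}$-closeness of $\bar g_{v_o}+t^2h^4$ to $g_\infty$ on the orbifold part together with the estimates from Proposition~\ref{controle desing un point}, and both make Lemma~\ref{self-dual-curvature} the decisive input on the bubble, where $W^+(\bar g_{v_b})\equiv 0$ forces the leading contribution to come from $W^{+,(1)}_{\bar g_{v_b}}(h_2)$. The paper refines this bookkeeping by further splitting the bubble into an asymptotic ALE annulus (compared directly back to $\bar g_{v_o}$ via $\mathcal R^{+,(1)}_{\eucl}(\mathfrak H^4)=0$) and a compact core, and by pinning down the eigenvalues explicitly as $(0,0,\lambda)$ and $(-\lambda/3,-\lambda/3,2\lambda/3)$ --- using the admissible-limit zero-eigenvalue constraint on $\mathcal R^+(g_\infty)(p)$ from \cite{ozu2,biq2} and the repeated-eigenvalue property of the conformally K\"ahler $g_\infty$ --- whereas your abstract eigenvalue-matching argument reaches the same conclusion without that extra computation.
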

\begin{proof}
On the region $\rad_o>2t^{\frac{1}{4}}$ where $g^{A}_{t,v} = \bar{g}_{v_o} + t^2h^4$, we have $|\nabla^l_{g_\infty}(\bar{g}_{v_o}-g_\infty)| \leqslant C \|v_o\|_{C^2}$, and $t^2|\nabla^l_{g_\infty}h^4|\leqslant t^{1-\frac{l}{4}}$. Consequently, when $(t,v_o)\to 0$, we have $ |\nabla^l_{g_\infty}(\bar{g}_{v_o} + t^2h^4-g_\infty)| \to 0$ for $ l<4 $, and in particular, the self-dual Weyl curvature of $\bar{g}_{v_o} + t^2h^4$ uniformly tends to that of $g_\infty$ in this region. 

For $\delta>0$ small enough independent of $(t,v)$, on the region $\delta^{-1}<\rad_b<t^{-1/4}$, where $\frac{1}{t}g^{A}_{t,v} =\bar{g}_{v_b} +th_2+t^{\frac{3}{2}}h_3+t^2h_4$, we have $$g^{A}_{t,v} - \bar{g}_{v_o} = t^2 (\mathfrak{H}^4+ \mathfrak{H}_2^4+\mathfrak{H}_3^4+\mathsf{O}(\rad_b^{-8})),$$
and since $\mathcal{R}^{+,(1)}_{\eucl}(\mathfrak{H}^4) = 0$, this yields 
$$ \mathcal{R}^{+}(g^{A}_{t,v}) = \mathcal{R}^{+}(\bar{g}_{v_o}) + \mathsf{O}(t^2 \rad_b^{-6} ) = \mathsf{O}(t^{\frac{1}{2}})=o(1),$$
and hence 
$$W^{+}(g^{A}_{t,v}) = W^{+}(\bar{g}_{v_o}) + \mathsf{O}(t^2 \rad_b^{-6} ) = \mathsf{O}(t^{\frac{1}{2}})=o(1).$$

Finally, on the region $\rad_b<\delta^{-1}$, where $\frac{1}{t}g^{A}_{t,v} =\bar{g}_{v_b} +th_2+t^{\frac{3}{2}}h_3+t^2h_4$, for $l \leqslant 3$, we have
$t^{\frac{i}{2}}|\nabla^l_{\bar{g}_{v_b}}h_i|\leqslant C t^{\frac{i}{2}}$ for $C$ independent of $t$ and $v$ small enough. We therefore have 
$$ W^{+}(g^{A}_{t,v}) =  W^{+,(1)}_{\bar{g}_{v_b}}(h_2) + \mathsf{O}(t). $$

However, because the Einstein orbifold $(X,g_\infty )$ is assumed to be limit of an admissible sequence of $\lambda > 0$ Einstein metrics, $\mathcal{R}^{+}(g_\infty )= \frac{s}{12} g_\infty+ W^+$ must
have zero as an eigenvalue at the singular point $p$ by \cite{ozu2}, which refined  an earlier result of  \cite{biq2}. However,  since Theorem \ref{hemhaw} tells us $g_\infty$ is conformal 
to a K\"ahler metric of positive scalar curvature,   the two smallest eigenvalues of the trace-free tensor $W^+$ are everywhere equal, it therefore follows that 
$\mathcal{R}^{+}(g_\infty )$ has eigenvalues $(0,0,\frac{s}{4})=(0,0,\lambda)$ at the singular point $p$. By Lemma \ref{self-dual-curvature}, it therefore follows that 
$\mathcal{R}^{+,(1)}_{\bar{g}_{v_b}}(h_2)$ has eigenvalues arbitrarily close to $(0,0,\lambda )$ as $(t,v)\to 0$. The eigenvalues of $W^+$ therefore tend to $(-\frac{\lambda}{3}, -\frac{\lambda}{3}, \frac{2\lambda}{3})$ 
in this region, and the claim therefore holds everywhere.  \end{proof}

\subsection*{Proof-of-Concept  Versions of Theorems \ref{alpha} and \ref{beta}}\label{no tree one sing}

Now suppose  that  $(X,g_\infty )$  is a conformally K\"ahler, Einstein orbifold with  $\lambda =3$, with {\sf exactly one} singularity, for which there is an admissible sequence $(M,g_i)$
with  $$d_{GH}\Big((M,g_i),(X,g_\infty)\Big)\to 0.$$
Furthermore,  assume  that any Ricci-flat ALE rescaled pointed limit that bubbles off from the sequence  has no orbifold singularities.

By \cite{ozu2}, then, after possibly passing to  a subsequence, there then exist scales $t_i$,  na\"{\i}ve desingularizations $g^D_{t_i}$, and parameters $v_i\in \mathfrak{O}(g_b)\times \mathfrak{O}(g_\infty)$ such that
\begin{itemize}
    \item $\|g_i-g^A_{t_i,v_i}\|_{C^{2,\alpha}_{\beta,*}(g^D_{t_i})} \to 0$, and
    \item $g_i-g^A_{t_i,v_i} \perp_{g^D_{tj}}\Tilde{\mathfrak{O}}(g^{A}_{t_i,v_i})$.
\end{itemize}
Then  \cite[Proposition 5.11]{ozu2} tells us that  
$$\|g_i-g^A_{t_i,v_i}\|_{C^{2,\alpha}_{\beta,*}(g^D_{t_i})} \leqslant C \|\mathscr{F}_{g^D_{t_i}}(g^A_{t_i,v_i}) + \Tilde{\mathfrak{O}}^A_{t_i,v_i}\|_{\rad_D^{-2}C^\alpha_\beta(g^D_{t_i})}=o(t_i)$$
as $i\to \infty$, by virtue  of   \eqref{controlled-Einstein-modulo-obstructions}. This then allows us to  obtain the following curvature-control: for some $C>0$, independent of $i$,
\begin{equation}\label{estimate W+} \hspace{-.008in}
    \| \operatorname{W}^+(g^A_{t_i,v_i}) - \operatorname{W}^+(g_i) \|_{C^{0}(g_i)}\leqslant C t_i^{-1} \| \operatorname{W}^+(g^A_{t_i,v_i}) - \operatorname{W}^+(g_i) \|_{\rad_D^{-2}C^{\alpha}_\beta(g^D_{t_i})} \to 0
\end{equation}
as $i\to \infty$, 
since, by definition of the norms: $\|\cdot \|_{C^{0}(g_i)} \;\leqslant \; C t_i^{-1} \|\cdot\|_{\rad_D^{-2}C^{\alpha}_\beta(g^D_{t_i})}$ because $\rad_D^2>t_i$.
However,  because $\det (W^+) (g_\infty) > 0$ everywhere,Corollary \ref{curvature-metrics} implies 
  that $\det\operatorname{W}^+(g^A_{t_i,v_i})>0$ for $i \gg 0$. By  \eqref{estimate W+}, the {open} property 
$\det \operatorname{W}^+>0$  is therefore inherited by the genuine Einstein metrics $g_i$ for sufficiently large $i$. We have thus shown  that 
the   Einstein metrics $g_i$  satisfy   Wu's  criterion  \eqref{pengwu},  so it  follows  \cite{lebdet,pengwu}  that the $g_i$ must be  \textit{conformally K\"ahler}
 when $i\gg 0$. However, such Einstein metrics on smooth $4$-manifolds are actually all 
 K\"ahler-Einstein, aside  \cite[Theorem A]{lebuniq}  from  two strongly-rigid exceptions   for which,  up to isometries and rescalings, there is only one conformally-K\"ahler Einstein manifold on the given $4$-manifold; but this rigid scenario 
 is then precluded by our assumption that the limit has an orbifold singularity. 
 We have thus shown that Theorems \ref{alpha} and \ref{beta} do in fact  hold in the test-case where $X$ has exactly  one singularity,  and no trees of singularities occur. 
 \hfill $\Box$

\section{Without  Trees of Singularities}
\label{indicative}

We now allow multiple singularities, but  exclude the formation of 
 trees of singularities.  For example, as explained  in Corollary \ref{eagle} below, 
 this  applies  to  desingularizations of  a  K\"ahler-Einstein orbifolds with $c_1^2 =4$. 
In general terms, this case is very similar to the situation discussed in \S \ref{simplest}, and can best be understood using the framework  of  \cite[Section 5.2]{ozu2} through the use 
of \textit{partial} Einstein-modulo-obstructions desingularizations.
Roughly speaking, the idea is to desingularize the singularities one-by-one, from the smallest scale to the largest one. {\em A priori}, one can only partially desingularize by Einstein-modulo-obstruction metrics. Starting by the smallest scale is crucial as it ensures negligible perturbations of the orbifold at its remaining singular points.
\\

Let us once again take $(M,g_i)$ to be  an admissible  sequence of Einstein metrics that converges to a conformally K\"ahler, Einstein orbifold $(X,g_\infty)$  in the Gromov-Hausdorff sense.
We  now allow $X$ to possibly have  \textit{multiple} singularities, but we still suppose  trees of singularities do not occur. As in the previous Section \ref{no tree one sing}, by \cite{ozu1,ozu2}, then, up to taking a subsequence, for $i$ large enough, there exists a na\"{\i}ve desingularization $g^D_{t_i}$ and parameters $v_i\in \Tilde{\mathfrak{O}}(g^D_{t_i})$ such that:
 $\|g_i-g^D_{t_i}\|_{C^{2,\alpha}_{\beta,*}(g^D_{t_i})} \to 0$.
As in \cite[Section 5.2]{ozu2}, we will produce a better approximate metrics $g^A_{t_i,v_i}$ by using \textit{partial} Einstein-modulo-obstructions desingularizations as stepping stones.

\subsection*{Iterative partial na\"{\i}ve  desingularizations}

After passing to a subsequence, we may assume  that we can order the $n$ singular points of $X$ as $p_1,...,p_n$ so that
$$0<t_{1,i}\leqslant t_{2,i} \leqslant... \leqslant t_{n,i},$$
where $t_{k,i}$ is the scale of the gluing at $p_k$ in the construction of $g^A_{t_i}$ in Definition \ref{defn approximate einstein mod obst}.

\begin{defn}[Partial na\"{\i}ve desingularizations]
  For $k\in \{1,...,n\}$, the $k^{\rm th}$ \emph{partial} desingularization of $(X,g_\infty )$, which we will denote by $(Y_k, g^{D_k}_{t})$,  is obtained by the  na\"{\i}ve gluing of $g_{b_1}$, ..., $g_{b_k}$ to  $(X,g_\infty )$ at the corresponding points $ p_1 $,..., $p_k$ at the scales $t_1$,..., $t_k$.
\end{defn}

\noindent By iteration on $k$, we can then define  approximate-Einstein-modulo-obstructions metrics $g^{A_k}_{t,v}$ and {\sf actual} Einstein-modulo-obstructions metrics $\hat{g}^{k}_{t,v}$ on $Y_k$ as follows:

\paragraph{For $k=0$,} we define $g^{D_0}_{t,v}=g^{A_0}_{t,v} = \hat{g}^{0}_{t,v} = \bar{g}_{v_o}$ and $\tilde{\mathfrak{O}}(g^{A_0}_{t,v}):=\mathfrak{O}(g_\infty)$.

\paragraph{Now let $0\leqslant k \leqslant n-1$.} 

The idea is to replace $\bar{g}_{v_o}$ by $\hat{g}^{k}_{t,v}$ and $\mathfrak{O}(g_\infty)$ by $\tilde{\mathfrak{O}}(g^{A_k}_{t,v})$ and to perform the constructions as in Section \ref{simplest}.

We start with some $\hat{g}^{k}_{t,v}$ that  in a neighborhood of $p_{k+1}$  satisfies $$\Ric(\hat{g}^{k}_{t,v})-3\hat{g}^{k}_{t,v} = w^k_{0}+w^k_{2}+\mathsf{O}(\rad_o^3)\in \mathfrak{O}(g_\infty),$$
and has an expansion 
\begin{equation}
    \hat{g}^{k}_{t,v} = {\eucl} + \mathfrak{H}_2(k)+\mathfrak{H}_3(k)+\mathfrak{H}_4(k)+\mathsf{O}(\rad^5),\label{dvp gktv}
\end{equation}
exactly like \eqref{dvp gov}. We can therefore define the approximate Einstein-modulo-obstructions $g^{A_{k+1}}_{t,v}$ as in Section \ref{section construction gA} as the desingularization of $\hat{g}^{k}_{t,v}$ at $p_{k+1}$ by $\bar{g}_{v_b}$ at scale $t_{k+1}$ replacing the $\mathfrak{H}_i$ by the present $\mathfrak{H}_i(k)$. Indeed, all of the steps of the construction are local and do not see the global properties of $g_\infty$, \textit{except} the construction of $ h^4 $ in \eqref{def-h^4} with $\mho(g_\infty)$ by $\tilde{\mho}(g^{A_k}_{t,v})$, which follows from the application of \cite[Proposition 4.9]{ozu2}  to $P_{g^{D_k}_{t,v}}(\Upsilon(\mathfrak{H}^4+\mathfrak{H}^4_2+\mathfrak{H}^4_3))$, where $\Upsilon$ is a controlled cut-off function supported in a neighborhood of $p_{k+1}$.
\\

We also extend the definition of $\tilde{\mathfrak{O}}(g^{A_{k+1}}_{t,v})$ as in Definition \ref{defn approximate einstein mod obst}: as the set of $\tilde{\mathbf{w}}+\tilde{\mathfrak{o}}$ for $(\mathbf{w},\mathfrak{o})\in \tilde{\mathfrak{O}}(g^{A_{k}}_{t,v})\times \mathfrak{O}(\bar{g}_{v_b})$, defined exactly as in Section \ref{section extension obstructions} replacing $\Bar{g}_{v_o}$ by $g^{A_{k}}_{t,v}$ and $\mathfrak{O}(g_\infty)$ by $\tilde{\mathfrak{O}}(g^{A_{k}}_{t,v})$.

Now, by \cite[Theorem 4.10]{ozu2}, there exists a unique Einstein-modulo-obstructions metric $\hat{g}^{k+1}_{t,v}$ satisfying: for $(t,v)$ small enough,
\begin{itemize}
    \item $\|\hat{g}^{k+1}_{t,v}-g^{A_{k+1}}_{t,v}\|_{C^{2,\alpha}_{\beta,*}(g^{D_k}_{t})} \to 0$ as $(t,v)\to 0$,
    \item $\hat{g}^{k+1}_{t,v}-g^{A_{k+1}}_{t,v}\perp_{g^{D_k}_{t,v}} \tilde{\mathfrak{O}}(g^{A_{k+1}}_{t,v}) $, and
    \item $\mathscr{F}_{g^{D_{k+1}}_{t}}(\hat{g}^{k+1}_{t,v}) \in \tilde{\mathfrak{O}}(g^{A_{k+1}}_{t,v})$.
\end{itemize}

\subsection*{Controlling the approximate desingularizations}

As in the proof of \cite[(96), Proposition 5.13]{ozu2}, the proof of Proposition \ref{controle desing un point}, which is local in nature, extends to the case of partial desingularizations.
\begin{prop}\label{controle desing partielle}
    Let $k\in\{1,...,n\}$. We have the following controls:
    \begin{enumerate}[{\rm (i)}]
        \item $\|\mathscr{F}_{g^{D_k}_{t}}(g^{A_k}_{t,v}) - \mho^{A_k}_{t,v}\|_{\rad_D^{-2}C^{\alpha}_\beta(g^{D_k}_{t})}=o(t_k), \text{ as $t_k\to 0$},$ \label{one}
        \item $\|\hat{g}^{k}_{t,v}-g^{A_k}_{t,v}\|_{C^{2,\alpha}_{\beta,*}(g^{D_k}_{t})}=o(t_k), \text{ as $t_k\to 0$},$ and \label{two} 
        \item at every point, the eigenvalues of the self-dual  Weyl curvature of $\hat{g}^{k}_{t,v}$ are  arbitrarily close to those of $g_\infty$ at some corresponding point of $X$. 
         \label{three} 
    \end{enumerate}
\end{prop}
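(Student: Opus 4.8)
The plan is to argue by induction on $k$, treating the passage from the $k^{\rm th}$ to the $(k+1)^{\rm st}$ partial desingularization exactly as the one-point construction of \S\ref{simplest} was carried out relative to $(X,g_\infty)$, but with $\hat{g}^{k}_{t,v}$ now playing the r\^ole of $g_\infty$ (and of $\bar{g}_{v_o}$), and $\tilde{\mathfrak{O}}(g^{A_k}_{t,v})$ the r\^ole of $\mathfrak{O}(g_\infty)$. The base case $k=1$ is precisely \S\ref{simplest}: there $\hat{g}^{0}_{t,v}=\bar{g}_{v_o}$, the estimate $(i)_1$ is Proposition \ref{controle desing un point}, $(ii)_1$ is \cite[Proposition 5.11]{ozu2}, and $(iii)_1$ is Corollary \ref{curvature-metrics}. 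For the inductive step I would first record the structural fact that makes the whole argument local: since the singular points are ordered so that $t_{1,i}\leqslant\cdots\leqslant t_{n,i}$, every gluing performed in building $Y_k$ from $X$ takes place at scale $\leqslant t_{k+1}$, so the $\rad_D$-weighted bounds $(ii)_k$ and $(iii)_k$ force $\hat{g}^{k}_{t,v}$ to differ from $g_\infty$ on a fixed neighborhood of $p_{k+1}$ only by quantities negligible to every order appearing in \S\ref{section construction gA}. In particular $\hat{g}^{k}_{t,v}$ has, near $p_{k+1}$, an expansion of the form \eqref{dvp gktv} with $\mathfrak{H}_j(k)\to\mathfrak{H}_j$, and the constant part $\mathfrak{w}_0(k)$ of its trace-free Ricci at $p_{k+1}$ converges to that of $g_\infty$.

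Granting this, the building blocks $h_2,h_3,h_4,h^4,\mathfrak{o}^4_{(2)},\mho_2,\mho_4$ of $g^{A_{k+1}}_{t,v}$ are to be constructed verbatim as in \S\ref{section construction gA}, the only non-local input being the solvability of the analogue of \eqref{def-h^4}, which comes from \cite[Proposition 4.9]{ozu2} applied to $P_{g^{D_k}_{t,v}}\bigl(\Upsilon(\mathfrak{H}^4+\mathfrak{H}^4_2+\mathfrak{H}^4_3)\bigr)$ with $\tilde{\mathfrak{O}}(g^{A_k}_{t,v})$ in place of $\mathfrak{O}(g_\infty)$. Then $(i)_{k+1}$ is proved as Proposition \ref{controle desing un point}: the calculation is local, and the only delicate region is the gluing annulus $t_{k+1}^{1/4}<\rad_o<2t_{k+1}^{1/4}$ about $p_{k+1}$, where matching the expansions of $\bigl(\hat{g}^{k}_{t,v}+t_{k+1}^2 h^4\bigr)$ and of $\phi_{\sqrt{t_{k+1}},*}\bigl(\bar{g}_{v_b}+t_{k+1}h_2+t_{k+1}^{3/2}h_3+t_{k+1}^2 h_4\bigr)$ produces a discrepancy of size $\mathsf{O}(t_{k+1}^{5/4})=t_{k+1}^{\beta/4}o(t_{k+1})$, while on the other regions one uses the estimates of \cite{biq1,ozu3,ozu2} together with the facts that, away from $p_{k+1}$, $g^{A_{k+1}}_{t,v}=\hat{g}^{k}_{t,v}+t_{k+1}^2 h^4$ and $\mathscr{F}_{g^{D_{k+1}}_t}(\hat{g}^{k}_{t,v})\in\tilde{\mathfrak{O}}(g^{A_k}_{t,v})$. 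Feeding $(i)_{k+1}$ into the a priori linear estimate \cite[Proposition 5.11]{ozu2} --- equivalently into the fixed-point statement \cite[Theorem 4.10]{ozu2} that produces $\hat{g}^{k+1}_{t,v}$ --- then gives $(ii)_{k+1}$: $\|\hat{g}^{k+1}_{t,v}-g^{A_{k+1}}_{t,v}\|_{C^{2,\alpha}_{\beta,*}}\leqslant C\,\|\mathscr{F}_{g^{D_{k+1}}_t}(g^{A_{k+1}}_{t,v})+\mho^{A_{k+1}}_{t,v}\|_{\rad_D^{-2}C^\alpha_\beta}=o(t_{k+1})$.

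Finally, $(iii)_{k+1}$ should be obtained as in Corollary \ref{curvature-metrics}: by $(ii)_{k+1}$ and the definition of the weighted norms (cf.\ \eqref{estimate W+}), $W^+(\hat{g}^{k+1}_{t,v})$ and $W^+(g^{A_{k+1}}_{t,v})$ differ by $o(1)$ in $C^0$, so it is enough to analyze $g^{A_{k+1}}_{t,v}$. Away from the ALE core ($\rad_b>\delta^{-1}$), this metric is a perturbation of $\hat{g}^{k}_{t,v}$ by terms --- essentially $t_{k+1}^2\mathfrak{H}^4$ --- whose first-order contribution to $\mathcal{R}^+$ vanishes, so its self-dual Weyl eigenvalues stay close to those of $\hat{g}^{k}_{t,v}$, hence, by $(iii)_k$, to those of $g_\infty$ at corresponding points of $X$. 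On the ALE core ($\rad_b<\delta^{-1}$), the extension of Lemma \ref{self-dual-curvature} goes through unchanged --- since $\mathfrak{w}_0(k)$ is again a constant traceless $2$-tensor to which Lemma \ref{description-W_0} applies --- so $\mathcal{R}^{+,(1)}_{g_b}(h_2)$ has constant eigenvalues equal to those of $\mathcal{R}^+(\hat{g}^{k}_{t,v})$ at $p_{k+1}$; and since $\hat{g}^{k}_{t,v}$ is close to $g_\infty$ there, while admissibility forces $\mathcal{R}^+(g_\infty)$ to have a vanishing eigenvalue at every singular point and Theorem \ref{hemhaw} forces the two smallest eigenvalues of $W^+(g_\infty)$ to coincide, these eigenvalues are arbitrarily close to $(0,0,\lambda)$, whence $W^+(g^{A_{k+1}}_{t,v})$ has eigenvalues arbitrarily close to $(-\tfrac{\lambda}{3},-\tfrac{\lambda}{3},\tfrac{2\lambda}{3})$ on the core. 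This completes the induction. The hard part will be the bookkeeping in the inductive step: one must make sure, via the smallest-to-largest ordering of the $t_{k,i}$ and the $\rad_D$-weights, that the already-resolved loci $p_1,\dots,p_k$ leave the geometry near $p_{k+1}$ genuinely unperturbed, so that both the local constructions of \S\ref{section construction gA} and the curvature identity of Lemma \ref{self-dual-curvature} remain valid with $\hat{g}^{k}_{t,v}$ substituted for $g_\infty$.
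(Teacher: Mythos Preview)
Your proposal is correct and follows essentially the same approach as the paper: induction on $k$, with the inductive step obtained by rerunning the local one-point construction of \S\ref{simplest} with $\hat{g}^{k}_{t,v}$ in place of $\bar{g}_{v_o}$ and $\tilde{\mathfrak{O}}(g^{A_k}_{t,v})$ in place of $\mathfrak{O}(g_\infty)$, invoking \cite[Proposition 5.11]{ozu2} for \eqref{two} and the argument of Corollary~\ref{curvature-metrics} for \eqref{three}. The paper's own proof is somewhat terser and takes the trivial case $k=0$ as its base, but the content is the same; your added remarks about why the smallest-to-largest ordering of the $t_{k,i}$ keeps the geometry near $p_{k+1}$ unperturbed make explicit a point the paper leaves implicit.
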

\begin{proof}
    We will prove this by induction on $k$.
   When  $k=0$, one has $g^{D_0}_{t,v}=g^{A_0}_{t,v} = \hat{g}^{0}_{t,v} = \bar{g}_{v_o}$ and the result is clear.
      Proceeding by induction, let us now assume that the Proposition holds for $0\leqslant k \leqslant n-1$.
    
 To prove \eqref{one}, notice that    the control of $$\|\mathscr{F}_{g^{D_{k+1}}_{t}}(g^{A_{k+1}}_{t,v}) - \mho^{A_{k+1}}_{t,v}\|_{\rad_D^{-2}C^{2,\alpha}_\beta(g^{D_{k+1}}_{t})}$$ is exactly as in the proof of \eqref{controlled-Einstein-modulo-obstructions}, as the argument  is local, and therefore also applies to partial desingularizations; cf.  \cite[Proposition 5.13]{ozu2}.

To prove \eqref{two}, notice that, as in 
Section \ref{no tree one sing}, we once again have 
$$\|\hat{g}^{{k+1}}_{t,v}-g^{A_{k+1}}_{t,v}\|_{C^{2,\alpha}_{\beta,*}(g^{D_{k+1}}_{t,v})}\leqslant C\|\mathscr{F}_{g^{D_{k+1}}_{t}}(g^{A_{k+1}}_{t,v}) - \mho^{A_{k+1}}_{t,v}\|_{\rad_D^{-2}C^{\alpha}_\beta(g^{D_{k+1}}_{t,v})}$$
for some $C> 0$,  where the right-hand side is $o(t_{k+1})$ as $t_{k+1}\to 0$.  
Indeed, this follows from \eqref{one} and \cite[(88), Proposition 5.11]{ozu2}, the  proof of which 
does not require the existence of an  Einstein desingularization, but instead holds for any {\em Einstein-modulo-obstructions} metric.

 Finally, to prove \eqref{three}, notice that the proof of Corollary \ref{curvature-metrics} also works in our present setting, both at  and away from the orbifold singularities. 
    By the property for $k$, the metric $\hat{g}^k_{t,v}$, which plays the role of $\bar{g}_{v_o}$ in Section \ref{no tree one sing}, has self-dual Weyl curvature with eigenvalues arbitrarily close  to those
    of $(X,g_\infty )$ at some corresponding. An extension of the proof of Corollary \ref{curvature-metrics}, which is local in nature, then  shows that the eigenvalues of $W^+(g^{A_{k+1}}_{t,v})$ are arbitrarily close to
    those of  $W^+(g_\infty )$ at an appropriate point.
    We then conclude, in  exact analogy to  \eqref{estimate W+}, that 
    \begin{eqnarray}\label{estimate W+ partiel}
    \| \operatorname{W}^+(g^{A_{k+1}}_{t,v}) &-& \operatorname{W}^+(\hat{g}^{k+1}_{t,v}) \|_{C^{0}(g^{D_{k+1}}_t)} \\
    &\leqslant&C t^{-1} \| \operatorname{W}^+(g^{A_{k+1}}_{t,v}) - \operatorname{W}^+(\hat{g}^{k+1}_{t,v}) \|_{\rad_D^{-2}C^{\alpha}_\beta(g^{D_{k+1}}_t)} \nonumber 
\end{eqnarray}
where the right-hand side tends to zero as $(t,v)\to 0$.
\end{proof}

\subsection*{Conclusion of the proof}

Bringing these ideas together, let us now  assume  
\begin{itemize}
\item that  $X^4$ is an orbifold with   $n$ isolated singularities $p_1,...,p_n$;
    \item that  $(X^4,g_\infty )$  is Hermitian and Einstein, with $\lambda >0$;
    \item that there is an admissible  sequence $(M,g_i)$ with 
    $$d_{GH}\Big((M,g_i),(X,g_\infty)\Big)\to 0; \quad \mbox{and}$$
    \item that no trees of singularities arise  in this limiting process. 
\end{itemize}

By \cite{ozu2}, then, up to taking a subsequence, for $i$ large enough, there exist $t_i>0$, along with na\"{\i}ve desingularizations $g^D_{t_i}$ and parameters $$v_i\in \mathfrak{O}(g_{b_1}) \times ... \times \mathfrak{O}(g_{b_n}) \times \mathfrak{O}(g_\infty)$$ for which 
\begin{itemize}
    \item $\|g_i-g^{A_n}_{t_i,v_i}\|_{C^{2,\alpha}_{\beta,*}(g^D_{t_i})} \to 0$, and
    \item $g_i-g^{A_n}_{t_i,v_i} \perp_{g^D_{t_i}}\Tilde{\mathfrak{O}}(g^{A_n}_{t_i,v_i})$.
\end{itemize}
Since our assumptions in particular imply that $M$ has the {expected topology}, and all of the Ricci-flat ALE spaces arising from curvature concentration are K\"ahler, 
our assumption  that {no trees of singularities arise}
then implies \cite[\S 7.1]{ozu2}
that, after passing to  a subsequence and up to a diffeomorphism, $g_i= \hat{g}^n_{t_i,v_i}$ for one of the above $\hat{g}^n_{t_i,v_i}$. 
In particular, $g_i$ is an Einstein metric  
satisfying Wu's criterion $\det\operatorname{W}^+(g_i)>0$, and it 
 therefore follows \cite{lebdet,pengwu}  that, for $i$ large enough, the  $g_i$ are necessarily  \textit{conformally K\"ahler}. However, as previously noted, such metrics on a smooth $4$-manifold $M$  are actually 
 K\"ahler-Einstein unless \cite{lebuniq} there is,  up to isometries and rescalings,  only one conformally-K\"ahler Einstein manifold on $M$. 
Theorems \ref{alpha} and \ref{beta} thus now follow in the  case where  no trees of singularities arise.  \hfill $\Box$

\medskip 

\noindent Combining this with Proposition  \ref{birdy}
now yields  the following application:

\begin{cor} \label{eagle}
Let $(X^4,g_\infty)$  be a K\"ahler-Einstein orbifold with at least one singular point. If  $c_1^2 (X) \geq 4$, and if $(X, g_\infty)$ is   the Gromov-Hausdorff limit of an 
admissible sequence $(M, g_i)$ of smooth $\lambda > 0$ Einstein $4$-manifolds, then $c_1^2 (X) =4$, and $M\approx \CP_2 \# 5 \overline{\CP}_2$.
\end{cor}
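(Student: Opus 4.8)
The plan is to read the conclusion off the no-bubble-tree case of Theorems~\ref{alpha} and~\ref{beta} that has just been established, feeding in Propositions~\ref{jobs} and~\ref{birdy} to control the numerics. First I would use Proposition~\ref{jobs} to rewrite $c_1^2(X)=(2\chi+3\tau)(M)$, so that the hypothesis $c_1^2(X)\geq 4$ forces this common integer into $\{4,5,\ldots,9\}$. Since $X$ has a singular point and $(2\chi+3\tau)(M)\geq 4$, Proposition~\ref{birdy} then forces \emph{every} singular point of $X$ to have orbifold group of order $2$; because the only order-two subgroup of $U(2)$ that acts freely on $S^3$ is $\{\pm I\}\subset SU(2)$, all singularities of $X$ are of type $A_1$.

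The next step is to exclude bubble-trees. By Proposition~\ref{revealed}, the only oriented anti-self-dual Ricci-flat ALE space with tangent cone at infinity $\CC^2/\ZZ_2$ is the smooth Eguchi-Hanson space; hence no gravitational instanton bubbling off from $(M,g_i)\to(X,g_\infty)$ can itself be an orbifold, so no tree of singularities forms in this limiting process. The case of Theorems~\ref{alpha} and~\ref{beta} already proved in \S\ref{indicative} therefore applies verbatim: for $i\gg 0$ the $g_i$ are K\"ahler-Einstein, $M$ is a Del Pezzo surface, and $(X,g_\infty)$ is one of the K\"ahler-Einstein orbifolds classified by Odaka-Spotti-Sun.

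Finally I would pin down the diffeotype. For a singular $X$, Proposition~\ref{birdy} already forbids $c_1^2(X)\geq 6$ (an order-$2$ group cannot satisfy $2<12/6$), but it does not by itself exclude $c_1^2(X)=5$, since $2<12/5$; that remaining possibility is instead ruled out by the Odaka-Spotti-Sun classification, whose singular orbifold limits all have $c_1^2\leq 4$ (cf.\ the list displayed after Proposition~\ref{birdy}, namely desingularizations of $\CP_2\#k\overline{\CP}_2$ with $5\leq k\leq 8$). Combined with $c_1^2(X)\geq 4$, this gives $c_1^2(X)=4$; then Proposition~\ref{jobs} and Theorem~\ref{upon} identify $M$ as the unique Del Pezzo diffeotype with $(2\chi+3\tau)=4$, namely $M\approx\CP_2\#5\overline{\CP}_2$. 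The only step that is not routine assembly of results in hand is this last exclusion of $c_1^2(X)=5$: I expect it to be the crux, and it is handled by invoking the Odaka-Spotti-Sun classification rather than any elementary volume comparison.
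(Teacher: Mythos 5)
Your proof is correct, and for the opening steps it runs parallel to the paper's: you invoke Proposition~\ref{jobs} to identify $c_1^2(X)$ with the integer $(2\chi+3\tau)(M)\in\{4,\ldots,9\}$, Proposition~\ref{birdy} to force every orbifold group to be $\ZZ_2$, Proposition~\ref{revealed} to see that the only anti-self-dual Ricci-flat ALE space asymptotic to $\CC^2/\ZZ_2$ is the smooth Eguchi-Hanson space (so bubble-trees are impossible), and then the tree-free case of Theorems~\ref{alpha} and~\ref{beta} established in \S\ref{indicative}. You also make the nice observation, which the paper's proof does not spell out, that Proposition~\ref{birdy} already forbids $c_1^2(X)\geq 6$ outright, since a singular point would need $|\Gamma_k|<12/6=2$.

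Where you diverge genuinely is the final step, the exclusion of $c_1^2(X)=5$. You first use the full K\"ahler-Einstein conclusion of \S\ref{indicative} to place $(X,g_\infty)$ among the Odaka-Spotti-Sun limits, and then appeal to the fact that those classified \emph{singular} limits all have degree $\leq 4$. The paper instead works one step earlier in the chain: it uses only the weaker intermediate fact that the $g_i$ are \emph{Hermitian} for $i\gg 0$, and then invokes the classification \cite{lebuniq}, which says that on a smooth compact complex surface with $c_1^2\geq 5$ there is, up to isometry and scale, at most one $\lambda>0$ Hermitian Einstein metric. Rigidity of the metric then forbids degeneration, so no singular orbifold limit can arise when $c_1^2\geq 5$. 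Both arguments ultimately trade on the same underlying rigidity of high-degree Del Pezzo surfaces, but the paper's route has the advantage of not requiring the K\"ahler-Einstein upgrade before concluding (and of mirroring the same rigidity argument used at the end of \S\ref{forest}), whereas yours leans on the Odaka-Spotti-Sun classification directly, which is the perspective the paper itself takes in the discussion immediately following Proposition~\ref{birdy}. Either way, once $c_1^2(X)=4$ is forced, Proposition~\ref{jobs} together with Theorem~\ref{upon} gives $M\approx\CP_2\#5\overline{\CP}_2$, exactly as you conclude.
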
 

\begin{proof} When  $c_1^2 (X)\geq 4$ and $(X,g_\infty )$ is K\"ahler-Einstein, 
Proposition  \ref{birdy} tells us that each singularity of $X$ is modeled on $\CC^2/\ZZ_2$, and this in particular
excludes the formation of bubble-trees. The above 
 argument therefore shows that the Einstein $4$-manifolds $(M,g_i)$ must be Hermitian 
for all sufficiently large $i$.   However,  classification \cite{lebuniq} tells us 
that, up to isometry and rescalings,  there is at most one $\lambda >0$ Hermitian, Einstein metric on the underlying smooth $4$-manifold $M$  of a complex surface with $c_1^2 \geq 5$. When  $c_1^2(X)\geq 4$,  singularities can therefore
only form in the limit
when  $c_1^2 (M) = c_1^2 (X) =4$ and    $M$ is 
  diffeomorphic to  $\CP_2 \# 5 \overline{\CP}_2$.
\end{proof}

\section{The General Case}
\label{forest} 

Finally, we deal with trees of singularities by further refining our notion of approximate Einstein-modulo-obstructions metric.
In fact, our proof of Proposition \ref{once} already depended on the reason why it is  possible  to handle trees of  Ricci-flat K\"ahler ALE metrics: when glued together in the expected manner, they can always
be approximated by a {\em single}  Ricci-flat K\"ahler ALE metric \cite{ozu2}. The essence of what we will now do   amounts to a controlled, quantitative implementation of  this observation. While
 there are more elementary ways of carrying this out for  the Gibbons-Hawking gravitational instantons, our proof will be geared to handle the general case.

\subsubsection*{Asymptotics of the building blocks}

We will now consider:
\begin{itemize}
    \item $(N_1,\bar{g}_{v_{b_1}})$ a Ricci-flat ALE orbifold with a singularity at $p_1$, and
    \item $(N_2,\bar{g}_{v_{b_2}})$ glued at scale $t_2$ at a remaining singular point $p_1$ of $(N_1,\bar{g}_{v_{b_1}})$.
\end{itemize}

We have the following expansions of the different metrics: for any $l\leqslant 4$,
\begin{itemize}
    \item at its singular point $p_1$: $\bar{g}_{v_{b_1}}= \eucl + \mathfrak{H}_2+\mathfrak{H}_3+\mathfrak{H}_4+... +\mathfrak{H}_l + \mathsf{O}(\yha_{b_1}^{l+1})$
    \item at its infinity: $\bar{g}_{v_{b_2}} = \eucl + \mathfrak{H}^4+...+\mathfrak{H}^l+ \mathsf{O}(\yha_{b_2}^{l+1}).$
\end{itemize}
In particular, developing the equations $\mathscr{F}_\eucl(g_{b_1})=0$ and $\mathscr{F}_\eucl(g_{b_2})=0$ according to powers of $r_{b_1}$ and $r_{b_2}$ the above tensors satisfy equalities:
\begin{itemize}
    \item $P_\eucl\mathfrak{H}_i + Q^{[i],+}_\eucl(\mathfrak{H}_2,\mathfrak{H}_3,...,\mathfrak{H}_{i-1}) = 0 $, and
    \item $P_\eucl\mathfrak{H}^j + Q^{[j],-}_\eucl(\mathfrak{H}^4,\mathfrak{H}^5,...,\mathfrak{H}^{j-1}) = 0 $,
\end{itemize}
where $Q^{[j],\pm}_\eucl$ are specific finite sums of multilinear operations on the tensors and their two first derivatives. The key point is that $Q^{[i],+}_\eucl(\mathfrak{H}_2,\mathfrak{H}_3, \ldots ,\mathfrak{H}_{i-1}) = \mathsf{O}(\rad^{i-2})$, and $Q^{[j],-}_\eucl(\mathfrak{H}^4,\mathfrak{H}^5, \ldots ,\mathfrak{H}^{j-1})=\mathsf{O}(\rad^{-j-2})$.

\subsubsection*{Approximations of  
Ricci-flat K\"ahler metrics}

   When $v_b = (v_{b_1},t_2,v_{b_2})$ is sufficiently small, we will use  $g^B_{v_b}$ to denote 
   the na\"{\i}ve gluing $\Bar{g}_{v_{b_1}}\#_{p_1,t_2}\bar{g}_{v_{b_2}}$ and let $\rad_b$ be the associated extension of $\rad_D$ of Definition \ref{definition rD}. By \cite[Section 6.2]{ozu2}, there is a unique  Ricci-flat K\"ahler ALE metric $\bar{g}_{v_b}$ that,  for some $\epsilon>0$, satisfies
       \begin{enumerate}
        \item $\mathscr{F}_{g^B_{v_b}}(\Bar{g}_{v_b}) =0,$
        \item $\|\Bar{g}_{v_b}-g^B_{v_b}\|_{C^{2,\alpha}_{\beta,**}}\leqslant \epsilon$, and
        \item $\Bar{g}_{v_b}-g^B_{v_b}\perp_{L^2(g_b)}\Tilde{\mathfrak{O}}(g^B_{v_b})$,
    \end{enumerate}
    where $C^{2,\alpha}_{\beta,**}(g^B_{v_b})$ is defined in  Definition \ref{norm orbifold ALE} below. 
       We emphasize that  this construction  reaches \textit{all} nearby Ricci-flat ALE metrics.

The results of \cite[Section 6.2]{ozu2} tell us that any Einstein-modulo-obstructions perturbation of a tree of  Ricci-flat K\"ahler ALE metrics is in fact  a  Ricci-flat K\"ahler ALE metric. For any $l\in \mathbb{N}$, this metric can then be approximated by a na\"{\i}ve gluing: 
\begin{equation}\label{eq def gBL}
    g^{B,[l]}_{v_b}:=\Big(\overline{g}_{v_{b_1}}+\sum_{j=4}^{l}t_2^{\frac{j}{2}}h^j\Big) \#_{p_1,t_2}\Big(\overline{g}_{v_{b_2}}+\sum_{i=2}^{l}t_2^{\frac{i}{2}}h_i\Big),
\end{equation}
    where for $i,j\leqslant 4$, one defines the $h_i$ and $h^j$ as in \eqref{def-lambda-k}, \eqref{def-h3}, \eqref{def-h^4} where all of the obstructions $\mho_i$ and $\mathbf{w}^4$ vanish. 
  Following   \eqref{def-mu-k}, we now solve iteratively for $i,j\geqslant 4$ by applying the following two steps: 
    \begin{enumerate}
        \item first, solve the system 
            \begin{empheq}[box=\fbox]{align}
    P_{\bar{g}_{v_{b_1}}}h^j&= -Q_{\bar{g}_{v_{b_1}}}^{[j],-}(h^4,...,h^{j-1}),\nonumber \\
        h^j &= \sum_{i\leqslant j-1} \mathfrak{H}_i^j + \mathfrak{H}_j^j + \mathsf{O}(\rad^{-1-\epsilon}),\; \; \forall\; \epsilon>0,  \label{def-h^j} \\
        \int_{\Sigma}\langle h^j&, \mho_{v_{b_1}}\rangle_{\bar{g}_{v_{b_1}}} da_{(\bar{g}_{v_{b_1}|\Sigma})}= 0 \quad \forall  \mho_{v_{b_1}}\in  \mathfrak{O}(\bar{g}_{v_{b_1}})
 \nonumber  
    \end{empheq}

%
%

        where $Q_{\bar{g}_{v_{b_1}}}^{[j],-}$ involves the nonlinear terms in $h^l$ for $l\leqslant j-1$ appearing when solving $\mathscr{F}_{\bar{g}_{v_{b_1}}}(g)=0$, and where the $\mathfrak{H}_i^j$ for $i\leqslant j-1$ are determined by the previously defined $h_i$. In order to solve this equation, notice that for $\tilde{\Upsilon}$ a cut-off function supported in a neighborhood of $p_1$
        $$ P_{\bar{g}_{v_{b_1}}}\Big(\Tilde{\Upsilon}\sum_{i\leqslant j-1} \mathfrak{H}_i^j\Big) -Q_{\bar{g}_{v_{b_1}}}^{[j],-}(h^4,...,h^{j-1}) = \mathsf{O}(r_{b_1}^{-2-\epsilon}), \;\; \forall \epsilon>0, $$
        hence, by \cite[Lemma 4.42]{ozuthese}, one can find $h'$ such that 
    $$P_{\bar{g}_{v_{b_1}}}\Big(h'+\Tilde{\Upsilon}\sum_{i\leqslant j-1} \mathfrak{H}_i^j\Big) -Q_{\bar{g}_{v_{b_1}}}^{[j],-}(h^4,...,h^{j-1})\in \mathfrak{O}(\Bar{g}_{v_{b_1}}).$$
        Since the gluing is unobstructed by \cite[Section 6.2]{ozu2}, one then shows iteratively that the element of $\mathfrak{O}(\Bar{g}_{v_{b_1}})$ vanishes, and $h' = \mathfrak{H}_j^j + \mathsf{O}(\rad^{-1-\epsilon})$ for $\mathfrak{H}_j^j = \mathsf{O}(\rad^{-\epsilon})$ for all $\epsilon>0$, and in fact, for any $L\geqslant 2$ and $\epsilon>0$, there is a neighborhood of infinity where
        $$ h' = \mathfrak{H}_j^j+ \sum_{l= j+1}^L \mathfrak{H}^j_l + \mathsf{O}(\rad_o^{L-j-\epsilon}), $$
        \item then, one similarly solves the equation 
            \begin{empheq}[box=\fbox]{align}
    P_{\bar{g}_{v_{b_2}}}h_j&= -Q_{\bar{g}_{v_{b_2}}}^{[j],+}(h_2,...,h_{j-1}),    \nonumber   \\
        h_j &= \sum_{i\leqslant j} \mathfrak{H}_j^i + \mathfrak{H}_j^{j+1} + \mathsf{O}(\rad^{-2+\epsilon}),\; \; \forall\; \epsilon>0,  \label{def-h_j}\\
        \int_{\Sigma}\langle h_j&, \mho_{v_{b_2}}\rangle_{\bar{g}_{v_{b_2}}} da_{(\bar{g}_{v_{b_2}|\Sigma})}= 0 \text{ for all  } \mho_{v_{b_2}}\in  \mathfrak{O}(\bar{g}_{v_{b_2}}).
 \nonumber  
    \end{empheq}

%
%
%

        where $Q_{\bar{g}_{v_{b_2}}}^{[j],+}$ involves the nonlinear terms in $h^l$ for $l\leqslant j$ appearing when solving $\mathscr{F}_{\bar{g}_{v_{b_2}}}(g)=0$, and where the $\mathfrak{H}_j^i$ for $i\leqslant j$ are determined by the previous $h^l$ defined. One then obtains $ \mathfrak{H}^{j+1}_j $ determined from the unique solution of \eqref{def-h_j} such that in a neighborhood of infinity, for all $L\geqslant 4$, and $\epsilon>0$ $h_j = \sum_{l=4}^L \mathfrak{H}_j^l + \mathsf{O}(\rad_b^{j-L+\epsilon})$.
    \end{enumerate}

The point is that there are \textit{no obstructions} \cite[Section 6.2]{ozu2} which lets us estimate the metrics directly without having to approximate obstructions. Explicit computations generalizing slightly those of Proposition \ref{controlled-Einstein-modulo-obstructions} (see also \cite[Sections 2 and 3]{ozu3}) lead to the following result.
\begin{prop}\label{prop controle KRFALE}
    For any $l\geqslant 4$ and $0<\alpha<1$ and $0<\beta<\frac{1}{4}$, there is $C>0$ independent of $v_b$ so that we have the control 
    $$\left\|(1+\rad_b)^4\mathscr{F}_{g_{v_b}^B}(g^{B,[l]}_{v_b})\right\|_{\rad_b^{-2}C^{\alpha}_\beta(g^{B}_{v_b})}\leqslant C t_2^{\frac{l+1-\beta}{4}} = o(t_2^{\frac{l}{4}})$$
    and consequently, by {\em\cite[(5.15), Lemma 5.19]{ozuthese}}, we therefore have
    $$\|\Bar{g}_{v_b}-g^{B,[l]}_{v_b}\|_{C^{2,\alpha}_{\beta,**}(g^{B}_{v_b})}\leqslant C't_2^{\frac{l+1-\beta}{4}} = o(t_2^{\frac{l}{4}})$$
      for some $C'>0$ independent of $v_b$,
\end{prop}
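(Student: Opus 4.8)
The plan is to mimic, at higher order, the region-by-region estimate that established Proposition~\ref{controle desing un point}. Write $N_1\#N_2$ as the union of three pieces: the part of $N_1$ on which $\rad_{b_1}>2t_2^{1/4}$, where $g^{B,[l]}_{v_b}=\overline{g}_{v_{b_1}}+\sum_{j=4}^{l}t_2^{j/2}h^j$; the core of the rescaled $N_2$, where $\tfrac{1}{t_2}g^{B,[l]}_{v_b}=\overline{g}_{v_{b_2}}+\sum_{i=2}^{l}t_2^{i/2}h_i$; and the gluing annulus $\rad_{b_1}\sim t_2^{1/4}$, where the cut-off $\Upsilon$ interpolates between the two. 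On each piece one expands $\mathscr{F}_{g^B_{v_b}}(g^{B,[l]}_{v_b})$ using the polynomial structure recorded in the paragraph on asymptotics of the building blocks, namely $P_\eucl\mathfrak{H}_i = -Q^{[i],+}_\eucl(\mathfrak{H}_2,\dots,\mathfrak{H}_{i-1})$ with $Q^{[i],+}_\eucl=\mathsf{O}(\rad^{i-2})$ and the dual identities for the $\mathfrak{H}^j$.

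On the two non-overlap pieces the bound is essentially immediate. Each building block $\overline{g}_{v_{b_a}}$ is genuinely Ricci-flat K\"ahler ALE, so $\mathscr{F}_{g^B_{v_b}}$ vanishes on it there; and the corrections $h^j$ (resp.\ $h_i$) were constructed, by the two-step iteration \eqref{def-h^j}--\eqref{def-h_j}, precisely so as to cancel term by term --- in powers of $r_{b_1}$, resp.\ $r_{b_2}$ --- the nonlinear remainders that appear when the truncated expansion is fed into $\mathscr{F}$. Hence on these regions the only surviving term is the first one not cancelled, of $\rad$-homogeneity $l+1$; evaluated in the $\rad_b^{-2}C^{\alpha}_\beta$ norm against the weight $(1+\rad_b)^4$, which is calibrated to the order-$4$ ALE decay, this contributes $\mathsf{O}(t_2^{(l+1-\beta)/4})$.

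The delicate part --- and the main obstacle --- is the gluing annulus. There one must verify that the two asymptotic expansions, that of $\overline{g}_{v_{b_1}}+\sum_j t_2^{j/2}h^j$ seen from the $N_1$-side and that of $\overline{g}_{v_{b_2}}+\sum_i t_2^{i/2}h_i$ seen from the $N_2$-side, agree through order $l$, so that multiplying by $\Upsilon$ and differentiating it introduces only an error of the claimed size. This matching is exactly what the iteration is engineered to provide: the tail $\mathfrak{H}^j=\mathfrak{H}_j^j+\sum_{L>j}\mathfrak{H}^j_L+\mathsf{O}(\rad^{L-j-\epsilon})$ produced by \eqref{def-h^j} consists of the very tensors $\mathfrak{H}_j^i$ that reappear in the expansion of $h_i$ coming from \eqref{def-h_j}, so in the annulus the two partial sums differ only by terms of $\rad$-homogeneity $\geq l+1$ (up to $\epsilon$-losses) together with the fast-decaying tails from the far building block. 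Since $\rad_b\sim t_2^{1/4}$ throughout this annulus, all such terms become $\mathsf{O}(t_2^{(l+1)/4-\epsilon})$, and combining them with the elementary bounds on $|\nabla^k\Upsilon|$ gives the asserted control. The one point requiring care is the $\epsilon$-loss terms $\mathsf{O}(\rad^{-\epsilon})$ carried by the tails of the $h^j$ and $h_j$: since the weight exponent satisfies $0<\beta<\tfrac14$, there is room to absorb them into $\beta$ without eroding the power of $t_2$, and this is precisely what yields the sharp exponent $\tfrac{l+1-\beta}{4}$ rather than merely $o(t_2^{l/4})$.

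The second estimate then follows from the first by a standard linear argument. By \cite[Section~6.2]{ozu2} the gluing of the tree $N_1\#_{p_1,t_2}N_2$ is \emph{unobstructed}, so the linearization $P_{g^B_{v_b}}$ is uniformly invertible on $C^{2,\alpha}_{\beta,**}(g^B_{v_b})$ with operator norm bounded independently of $v_b$; a fixed-point / inverse-function-theorem argument --- this is exactly \cite[(5.15), Lemma~5.19]{ozuthese} --- then produces the Ricci-flat K\"ahler ALE metric $\overline{g}_{v_b}$, subject to the normalization $\overline{g}_{v_b}-g^B_{v_b}\perp_{L^2(g_b)}\tilde{\mathfrak{O}}(g^B_{v_b})$, together with the bound $\|\overline{g}_{v_b}-g^{B,[l]}_{v_b}\|_{C^{2,\alpha}_{\beta,**}}\le C'\big\|(1+\rad_b)^4\,\mathscr{F}_{g^B_{v_b}}(g^{B,[l]}_{v_b})\big\|_{\rad_b^{-2}C^{\alpha}_\beta}\le C't_2^{(l+1-\beta)/4}$. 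No approximation of obstruction spaces enters here, which is the simplification afforded by working entirely with K\"ahler Ricci-flat ALE metrics.
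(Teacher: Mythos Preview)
Your proposal is correct and follows essentially the same approach the paper indicates: the paper does not give a detailed proof here but simply says the result follows from ``explicit computations generalizing slightly those of Proposition~\ref{controle desing un point},'' which is precisely the region-by-region matching argument you carry out. Your elaboration of the annulus analysis --- checking that the tails $\mathfrak{H}^j_i$ produced on the $N_1$-side coincide with those fed into the $N_2$-side so that the two partial sums agree through order $l$ --- is the higher-order analog of the $l=4$ matching displayed in the proof of Proposition~\ref{controle desing un point}, and your invocation of the unobstructedness from \cite[Section~6.2]{ozu2} for the second estimate is exactly what the statement's citation of \cite[(5.15), Lemma~5.19]{ozuthese} packages.
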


In particular, considering $l\geqslant 9$ for $g_b$, we have the following controls.
\begin{cor}\label{controls ALE bar gvb}
We have the following more concrete controls on $\Bar{g}_{v_b}$:
    \begin{enumerate}
        \item in the region $\sqrt{t_2}\ll r_{b_1} $, or $1\ll r_{b_2}$, and so in particular at infinity, 
        $$\Bar{g}_{v_b}=\bar{g}_{v_{b_1}} + t_2^2h^4 + o(t_2^2r_{b_1}^{-4})$$
        \item in the region $r_{b_1}\ll 1$, or $r_{b_2}\ll t_2^{-\frac{1}{2}}$
        $$\Bar{g}_{v_b}=t_2(\bar{g}_{v_{b_1}} + t_2h_2 + t_2^{\frac{3}{2}}h_3 + t_2^2h_4) + o(t_2^2r_{b_2}^4)$$
        \item in the gluing region $\sqrt{t_2}\ll r_{b_1}\ll 1$ or $1\ll r_{b_2}\ll t_2^{-\frac{1}{2}}$, we have
    $$\Bar{g}_{v_b} = \eucl + \mathfrak{H}_2 + \mathfrak{H}_3+\mathfrak{H}^4+\mathfrak{H}_2^4+\mathfrak{H}_3^4+ \mathfrak{H}_4^4+\mathfrak{H}_4 + \mathsf{O}(r_{b_1}^5+ r_{b_2}^{-5}).$$
    \end{enumerate}
\end{cor}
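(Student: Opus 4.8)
The plan is to deduce the three expansions directly from Proposition~\ref{prop controle KRFALE} by unwinding, region by region, the definition~\eqref{eq def gBL} of the na\"ive gluing $g^{B,[l]}_{v_b}$. Fix $l\geq 9$. Proposition~\ref{prop controle KRFALE} then gives $\|\bar{g}_{v_b}-g^{B,[9]}_{v_b}\|_{C^{2,\alpha}_{\beta,**}(g^B_{v_b})}=o(t_2^{9/4})$ uniformly in $v_b$; since $9/4>2$, unpacking the weighted norm of Definition~\ref{norm orbifold ALE} converts this into a pointwise bound on $\bar{g}_{v_b}-g^{B,[9]}_{v_b}$ that is, in each of the three regions of the statement, of strictly lower order than the last term retained there --- in particular it is $o(t_2^2 r_{b_1}^{-4})$ on $\{\sqrt{t_2}\ll r_{b_1}\}$ and, after the rescaling $\phi_{\sqrt{t_2}}$, it is absorbed into $o(t_2^2 r_{b_2}^4)$ on $\{r_{b_1}\ll 1\}$. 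It therefore suffices to establish the three expansions with $\bar{g}_{v_b}$ replaced by $g^{B,[9]}_{v_b}$, which is a finite bookkeeping using the asymptotics of $\bar{g}_{v_{b_1}}$ and $\bar{g}_{v_{b_2}}$ and of the tensors $h^j$ and $h_i$ recorded in \eqref{def-lambda-k}, \eqref{def-h3}, \eqref{def-h^4}, \eqref{def-mu-k}, \eqref{def-h^j} and \eqref{def-h_j}.

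On the two ``outer'' regions the cut-off $\Upsilon$ of Definition~\ref{def naive desing} is locally constant, so there $g^{B,[9]}_{v_b}$ coincides with one of the two parenthesized summands of \eqref{eq def gBL}. Where $\sqrt{t_2}\ll r_{b_1}$ --- hence in particular at infinity --- one has $\Upsilon\equiv 1$, so $g^{B,[9]}_{v_b}=\bar{g}_{v_{b_1}}+\sum_{j=4}^{9}t_2^{j/2}h^j$; the $j=4$ term is $t_2^2h^4$ and the decay estimates in \eqref{def-h^j} give $\sum_{j\geq 5}t_2^{j/2}h^j=o(t_2^2 r_{b_1}^{-4})$, which together with the first paragraph yields expansion~(1). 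Where $r_{b_1}\ll 1$ one has $\Upsilon\equiv 0$, so after applying $\phi_{\sqrt{t_2},*}$ one gets $t_2^{-1}g^{B,[9]}_{v_b}=\bar{g}_{v_{b_2}}+t_2h_2+t_2^{3/2}h_3+t_2^2h_4+\sum_{i\geq 5}t_2^{i/2}h_i$, and the decay estimates in \eqref{def-lambda-k}, \eqref{def-h3}, \eqref{def-mu-k} and \eqref{def-h_j} bound the tail so that, after restoring the factor $t_2$, it is absorbed into $o(t_2^2 r_{b_2}^4)$; this yields expansion~(2).

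On the remaining region $\sqrt{t_2}\ll r_{b_1}\ll 1$, the gluing neck, the argument is exactly the transition-region bookkeeping already carried out in the proof of Proposition~\ref{controle desing un point} for the estimate~\eqref{controlled-Einstein-modulo-obstructions}. One substitutes $\bar{g}_{v_{b_1}}=\eucl+\mathfrak{H}_2+\mathfrak{H}_3+\mathfrak{H}_4+\mathsf{O}(r_{b_1}^5)$ and $\bar{g}_{v_{b_2}}=\eucl+\mathfrak{H}^4+\mathsf{O}(r_{b_2}^{-5})$, together with the expansions of the $h^j$ and $h_i$ up to the orders dictated by \eqref{def-h^j} and \eqref{def-h_j}, re-expresses everything in the common radial variable (on the overlap $r_{b_1}=\sqrt{t_2}\,r_{b_2}$), and uses that the profiles $\mathfrak{H}^j_i$ appearing in \eqref{def-h^j} and \eqref{def-h_j} were constructed precisely so that the $\Upsilon$- and $(1-\Upsilon)$-weighted contributions agree at every order not already swallowed by the error; the bounds on the derivatives of $\Upsilon$ then contribute nothing past $\mathsf{O}(r_{b_1}^5+r_{b_2}^{-5})$. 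Collecting the surviving terms $\eucl,\mathfrak{H}_2,\mathfrak{H}_3,\mathfrak{H}^4,\mathfrak{H}_2^4,\mathfrak{H}_3^4,\mathfrak{H}_4^4,\mathfrak{H}_4$ gives expansion~(3).

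The one point that requires genuine care is twofold and concentrated in the neck: first, one must check that the two-step iteration \eqref{def-h^j}--\eqref{def-h_j} produces a mutually consistent tower of matching data, so that the overlap terms cancel rather than accumulate; and second, one must confirm that the weighted estimate of Proposition~\ref{prop controle KRFALE} has been taken at a high enough order --- this is why one works with $l\geq 9$ rather than $l=4$ --- so that both the tensors $h^j,h_i$ discarded in the truncations above and the difference $\bar{g}_{v_b}-g^{B,[9]}_{v_b}$ itself are absorbed uniformly into the error $\mathsf{O}(r_{b_1}^5+r_{b_2}^{-5})$. Once these two points are in place, the rest is substitution and the triangle inequality.
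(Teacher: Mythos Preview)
Your proposal is correct and follows essentially the same approach as the paper, which presents the corollary without proof beyond the one-line remark ``In particular, considering $l\geqslant 9$ for $g_b$, we have the following controls.'' You have simply unpacked this: pass to $g^{B,[l]}_{v_b}$ via Proposition~\ref{prop controle KRFALE}, then read off the expansions region by region from~\eqref{eq def gBL} and the asymptotics~\eqref{def-h^j},~\eqref{def-h_j}. One small slip: with the conventions of Definition~\ref{def naive desing} (where $\Upsilon(t^{-1/4}\rho_\eucl)=1$ near the singular point and $=0$ away from it), the values $\Upsilon\equiv 1$ and $\Upsilon\equiv 0$ in your treatment of regions~(1) and~(2) are interchanged; the conclusions you draw in each region are nonetheless the right ones. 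Also note that regions~(1), (2), (3) overlap (region~(3) is contained in both~(1) and~(2)), so your ``outer regions'' description should really refer to $r_{b_1}\gg t_2^{1/4}$ and $r_{b_1}\ll t_2^{1/4}$, with the neck handling the remainder --- but this is organizational, not substantive.
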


We finish this section by an estimate of the variations at infinity of a tree of singularities with respect to the size of the bubbles attached to it.
\begin{itemize}
        \item for $l\in\{0,...,k+1\}$,  Ricci-flat K\"ahler $g_{b_{l}}$ that can be glued as a tree of singularities according to \cite[Section 4.3]{ozu2}
        \item $g^{B_{k+1}}$ (resp. $g^{B_{k}}$), the na\"{\i}ve gluing of the $g_{b_{l}}$ at scales $T_l = t_0...t_l>0$ for small enough $t_l>0$ as a tree for $l\in\{0,...,k+1\}$ (resp. $l\in\{0,...,k\}$),
        \item $\bar{g}_{B_{k+1}}$ (resp. $\bar{g}_{B_k}$) the  Ricci-flat K\"ahler ALE perturbation of $g^{B_{k+1}}$ (resp. $g^{B_{k}}$),
\end{itemize}

\begin{prop}
    For any $r_{B_k}\gg \sqrt{T_{k+1}}$, we have, for any $l\in \mathbf{N}$, and $C_l>0$ depending only on $l$,
    \begin{equation}\label{control term r-4}
        \rad^l_{B_k}|\nabla^l(\bar{g}_{B_k} - \bar{g}_{B_{k+1}})| \leqslant C_l T_{k+1}^2 r_{B_k}^{-4}.
    \end{equation}

    More precisely, there is a converging expansion
    $$\bar{g}_{B_k} - \bar{g}_{B_{k+1}} = \sum_{j\geqslant 4} \mathfrak{K}^j,$$
    with $|\mathfrak{K}^j|\leqslant C_j' T_{k+1}^2\rad^{-j}$ for $C_j'>0$ independent of $T_{k+1}$ or $r_{B_k}$.
\end{prop}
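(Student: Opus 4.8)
The plan is to bound the difference $\bar{g}_{B_k}-\bar{g}_{B_{k+1}}$ by comparing each of these Ricci-flat K\"ahler ALE metrics with an appropriate na\"{\i}ve gluing and then exploiting the asymptotics of the extra bubble $g_{b_{k+1}}$ attached at scale $T_{k+1}$. First I would fix $l\geq 9$ and apply Proposition \ref{prop controle KRFALE} (and Corollary \ref{controls ALE bar gvb}) to both $\bar{g}_{B_{k+1}}$ and $\bar{g}_{B_k}$, writing each as its corresponding na\"{\i}ve gluing $g^{B_{k+1},[l]}$, $g^{B_{k},[l]}$ plus an error that is $o(T_{k+1}^{\,l/4})$ in the weighted $C^{2,\alpha}_{\beta,**}$ norm. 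In the region $\rad_{B_k}\gg \sqrt{T_{k+1}}$ the two na\"{\i}ve gluings differ only through the correction terms $\sum_{j\geq 4}T_{k+1}^{j/2}h^j$ produced by the extra bubble, which, by the expansions $h^j=\sum_{i\leq j-1}\mathfrak{H}_i^j+\mathfrak{H}_j^j+\mathsf{O}(\rad^{-1-\epsilon})$ from \eqref{def-h^j}, feed into the metric through terms of size $\mathsf{O}(T_{k+1}^2\rad_{B_k}^{-4})$, the leading one coming from $t_2^2h^4$ exactly as in Corollary \ref{controls ALE bar gvb}(1). The cruder bound \eqref{control term r-4} then follows by summing the geometric-type series in $T_{k+1}^{j/2}\rad_{B_k}^{-j}=(T_{k+1}^{1/2}\rad_{B_k}^{-1})^j$ and using $\rad_{B_k}\gg\sqrt{T_{k+1}}$, together with the corresponding derivative estimates from the weighted norms.

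For the refined converging expansion $\bar{g}_{B_k}-\bar{g}_{B_{k+1}}=\sum_{j\geq 4}\mathfrak{K}^j$ with $|\mathfrak{K}^j|\leq C_j'T_{k+1}^2\rad^{-j}$, I would argue iteratively, much as in the two-step construction of the $h^j$ and $h_j$ in \eqref{def-h^j}--\eqref{def-h_j}. The tensors $\mathfrak{K}^j$ are obtained by expanding $\mathscr{F}_{g^{B_k}}(\bar{g}_{B_k})=0=\mathscr{F}_{g^{B_{k+1}}}(\bar{g}_{B_{k+1}})$ in powers of $\rad_{B_k}$ along the part of $B_k$ that survives in $B_{k+1}$; each $\mathfrak{K}^j$ solves a linear equation $P_{g^{B_{k+1}}}\mathfrak{K}^j=-Q^{[j]}(\mathfrak{K}^4,\dots,\mathfrak{K}^{j-1})+(\text{source from }g_{b_{k+1}})$ whose inhomogeneous term is $\mathsf{O}(T_{k+1}^2\rad^{-j-2})$ by the asymptotics of the building blocks recalled at the start of the section and by the already-established $\mathsf{O}(T_{k+1}^2\rad_{B_k}^{-4})$ bound on the lower-order terms. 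Invoking the Fredholm/solvability statements of \cite[Section 6.2]{ozu2} (the gluing of trees of Ricci-flat K\"ahler ALE metrics is \emph{unobstructed}), one solves each such equation with the claimed decay, with the overall factor $T_{k+1}^2$ carried along because it already multiplies the leading term $\mathfrak{K}^4\sim T_{k+1}^2\mathfrak{H}^4$. Convergence of $\sum_j\mathfrak{K}^j$ on $\{\rad_{B_k}\geq A\sqrt{T_{k+1}}\}$ follows from the geometric decay of the coefficients together with the a priori weighted estimate already obtained, exactly as the convergence of the analogous expansions in \eqref{def-h^j}--\eqref{def-h_j} is obtained in \cite{ozuthese,ozu2}.

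The main obstacle I anticipate is making the comparison uniform as the \emph{whole tree} $B_{k+1}$ degenerates, i.e. controlling the nonlinear terms $Q^{[j]}$ and the error in Proposition \ref{prop controle KRFALE} \emph{simultaneously} across all the gluing annuli of $B_k$ (not just the single annulus at $p_1$ that appears in the model two-bubble computation), while keeping constants independent of $v_b$ and of all the scales $T_l$. Concretely, one must check that attaching $g_{b_{k+1}}$ perturbs the metric only at the advertised order $T_{k+1}^2\rad_{B_k}^{-4}$ uniformly down to the next-deepest gluing region, which requires that the weighted norms $C^{2,\alpha}_{\beta,**}$ of Definition \ref{norm orbifold ALE} behave well under composition of gluings; this is where the bookkeeping of \cite[Section 6.2]{ozu2} and \cite[Lemma 5.19]{ozuthese} does the real work, and the argument here is essentially a careful invocation and mild extension of those estimates rather than anything genuinely new.
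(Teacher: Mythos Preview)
Your approach is genuinely different from the paper's. The paper's argument is a two-line sketch that bypasses the gluing machinery entirely: for cyclic $\Gamma\subset SU(2)$ it invokes the explicit Gibbons--Hawking ansatz, and for general $\Gamma$ it appeals to Kronheimer's construction of ALE hyper-K\"ahler metrics as hyper-K\"ahler quotients \cite[Propositions~3.10 and~3.14]{kronquot} together with the asymptotic expansions of Auvray \cite[Theorem~2.1]{auv}. In other words, the paper treats both $\bar g_{B_k}$ and $\bar g_{B_{k+1}}$ as points in a known, explicitly-parametrized family of Ricci-flat K\"ahler ALE metrics, and reads off \eqref{control term r-4} and the $\sum_{j\geq4}\mathfrak{K}^j$ expansion directly from how the asymptotic terms at infinity depend on the moduli parameters (attaching $g_{b_{k+1}}$ at scale $T_{k+1}$ shifts the parameters by $O(T_{k+1}^2)$, and the leading asymptotic term is always $O(\rad^{-4})$).

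Your route---comparing both metrics to na\"{\i}ve gluings via Proposition~\ref{prop controle KRFALE} and then tracking the corrections $T_{k+1}^{j/2}h^j$---is in principle viable and is more in the spirit of the paper's other estimates, but it is considerably more laborious, and the obstacle you flag is real. The estimates in Proposition~\ref{prop controle KRFALE} and Corollary~\ref{controls ALE bar gvb} are stated for a single two-bubble gluing; to get the pointwise bound $T_{k+1}^2 r_{B_k}^{-4}$ uniformly across the \emph{entire} region $\{r_{B_k}\gg\sqrt{T_{k+1}}\}$, including through all the intermediate gluing annuli of $B_k$ and out to infinity of the full tree, you would need to control how the perturbation $h^4$ propagates globally on $N_1$---in particular its decay at infinity of $N_1$, not just its asymptotics at the singular point $p_1$ recorded in \eqref{def-h^j}. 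The paper sidesteps this entirely by using the structural classification, which gives the behavior at infinity for free.
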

\begin{proof}[Sketch of proof]
    The result can be easily obtained for the  Ricci-flat K\"ahler ALE metrics asymptotic to $\mathbb{R}^4/\Gamma$ for $\Gamma\subset SU(2)$ cyclic. Indeed, they are Gibbons-Hawking's gravitational instantons  which are \textit{explicitly} given in coordinates \cite{gh}.

    For the general case, one can combine the resolution of singularities described in \cite[Proposition 3.10]{kronquot} together with the description of the asymptotic expansion of the \cite[Proposition 3.14]{kronquot} (see also \cite[Theorem 2.1]{auv}).
\end{proof}

\subsection*{Approximate metrics with trees of singularities}\label{approximate metric tree}

We now explain how to deal with trees of singularities. 

\subsubsection*{Setting up the iterative construction}

In this section, we consider $(X,g_\infty)$, a conformally K\"ahler Einstein orbifold with scalar curvature $12$ with multiple singularities of general type. We then consider a na\"{\i}ve desingularization of  $(X,g_\infty )$ at all of its singularities by trees of singularities which we will denote $g^D_{t,v}$. 

We will order our desingularizations according to their \textit{depth}, that is the size of the bubbles glued.
\begin{enumerate}
    \item we start with $g^{D_0}_{t,v} := g^{A_0}_{t,v}:=\hat{g}^{0}_{t,v}:=\Bar{g}_{v_o}$ as before, and
    \item given $g^{D_k}_{t,v}$, and the next largest scale $T_{k+1}$ at which to glue a Ricci-flat ALE metric $\Bar{g}_{{v_{k+1}}}$ to a remaining singular point $p_{k+1}$,
    $$g^{D_{k+1}}_{t,v} := g^{D_k}_{t,v}\;\#_{p_{k+1},T_{k}^2t_{k+1}}\;\Bar{g}_{v_{k+1}}.$$
\end{enumerate}

The rest of the argument mainly consists in carefully constructing approximations $g^{A_k}_{t,v}$ in order to control the Einstein-modulo-obstructions metrics $\hat{g}^{k}_{t,v}$ and their curvature. We will eventually reach the total Einstein-modulo-obstructions desingularization and be able to control its curvature.

\begin{prop}\label{prop recurrence arbre}
For each  $k\in\{1,...,n\}$,  we have: 
    \begin{enumerate}
        \item at every point, the eignenvalues of the self-dual  curvature $W^+$ of $\hat{g}^{k}_{t,v}$ differ form the eigenvalues of $W^+ (g_\infty)$, 
        at a suitable  point of $X$,  by only  an
         error that uniformly tends to zero  as $(t,v)\to (0,0)$;
        \item $\|\hat{g}^{k}_{t,v} - g^{A_{k}}\|_{C^{2,\alpha}_{\beta,*}}=o(T_k)$, and there is $C>0$ such that for any $l\in\{0,1,2,3\}$; and 
        \begin{equation}\label{controle arbre en r2}
            |\nabla^l(\hat{g}^{k}_{t,v} - g^{D_{k}})|\leqslant C r_{D_{k}}^2.
        \end{equation}
    \end{enumerate}
    
\end{prop}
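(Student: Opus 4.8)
The plan is to prove Proposition \ref{prop recurrence arbre} by induction on the depth $k$, following the pattern of Proposition \ref{controle desing partielle} but now tracking a whole tree of Ricci-flat ALE pieces at each step rather than a single bubble. For $k=0$ the statement is immediate, since $\hat g^0_{t,v}=g^{A_0}_{t,v}=g^{D_0}_{t,v}=\bar g_{v_o}$, and the eigenvalues of $W^+(\bar g_{v_o})$ converge to those of $W^+(g_\infty)$ as $v_o\to 0$ by property (2) of the Einstein-modulo-obstructions metric $\bar g_{v_o}$ (the $C^{2,\alpha}$-bound $\|\bar g_{v_o}-g_\infty\|\leq C\|v_o\|^2$). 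So assume the statement holds for some $0\leq k\leq n-1$, with the next-largest gluing scale $T_{k+1}=T_k^2 t_{k+1}$ at a remaining singular point $p_{k+1}$ of $\hat g^k_{t,v}$, where we attach a Ricci-flat ALE piece (itself possibly a tree, already resolved into a single Ricci-flat K\"ahler ALE metric $\bar g_{v_{k+1}}$ by the results of \cite[Section 6.2]{ozu2} as recorded in Corollary \ref{controls ALE bar gvb}).

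First I would construct the approximate metric $g^{A_{k+1}}_{t,v}$ exactly as in Section \ref{section construction gA}, but with $\bar g_{v_o}$ replaced by $\hat g^k_{t,v}$ and $\mathfrak{O}(g_\infty)$ replaced by $\tilde{\mathfrak{O}}(g^{A_k}_{t,v})$: by the inductive expansion $\hat g^k_{t,v}=\eucl+\mathfrak{H}_2(k)+\mathfrak{H}_3(k)+\mathfrak{H}_4(k)+\mathsf{O}(\rad^5)$ near $p_{k+1}$ (which holds just as in \eqref{dvp gktv}), all the building-block tensors $h_2,h_3,h_4,h^4$ of \eqref{def-lambda-k}--\eqref{def-mu-k} can be produced, the only non-local step being the construction of $h^4$, which goes through by applying \cite[Proposition 4.9]{ozu2} to $P_{g^{D_k}_{t,v}}(\Upsilon(\mathfrak{H}^4+\mathfrak{H}^4_2+\mathfrak{H}^4_3))$ with $\Upsilon$ supported near $p_{k+1}$. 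Then, as in Section \ref{indicative}, \cite[Theorem 4.10]{ozu2} yields the genuine Einstein-modulo-obstructions metric $\hat g^{k+1}_{t,v}$ with $\|\hat g^{k+1}_{t,v}-g^{A_{k+1}}_{t,v}\|_{C^{2,\alpha}_{\beta,*}}\to 0$ and $\mathscr{F}_{g^{D_{k+1}}_t}(\hat g^{k+1}_{t,v})\in\tilde{\mathfrak{O}}(g^{A_{k+1}}_{t,v})$. The estimate $\|\hat g^{k+1}_{t,v}-g^{A_{k+1}}_{t,v}\|_{C^{2,\alpha}_{\beta,*}}=o(T_{k+1})$ in part (2) follows from a tree-version of Proposition \ref{controle desing un point} — the proof of \eqref{controlled-Einstein-modulo-obstructions} is local and extends verbatim, cf.\ \cite[Proposition 5.13]{ozu2} — combined with \cite[(88), Proposition 5.11]{ozu2}, which requires only an Einstein-modulo-obstructions metric, not an Einstein one. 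The pointwise bound \eqref{controle arbre en r2} then comes from unwinding the definition of the weighted norm: since $\rad_{D_{k+1}}^2\gtrsim T_{k+1}$ where the bubble lives and the difference is controlled in $\rad_D^{-2}C^\alpha_\beta$, one gets $|\nabla^l(\hat g^{k+1}_{t,v}-g^{D_{k+1}})|\lesssim \rad_{D_{k+1}}^2$ for $l\leq 3$; here one must also feed in the expansion estimates of Corollary \ref{controls ALE bar gvb} and the tree-variation bound \eqref{control term r-4}, which guarantee that replacing the naively-glued tree by the single Ricci-flat ALE metric $\bar g_{v_{k+1}}$ costs only $O(T_{k+1}^2 r^{-4})$.

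The curvature statement (1) for level $k+1$ is obtained by re-running the proof of Corollary \ref{curvature-metrics} with $\bar g_{v_o}$ replaced by $\hat g^k_{t,v}$. On the region $\rad_o>2T_{k+1}^{1/4}$ we have $g^{A_{k+1}}_{t,v}=\hat g^k_{t,v}+T_{k+1}^2 h^4$ and the correction is $C^3$-small, so $W^+(g^{A_{k+1}}_{t,v})$ is close to $W^+(\hat g^k_{t,v})$, whose eigenvalues are close to those of $W^+(g_\infty)$ by the inductive hypothesis. On the intermediate region $\delta^{-1}<\rad_b<T_{k+1}^{-1/4}$ the difference $g^{A_{k+1}}_{t,v}-\hat g^k_{t,v}=T_{k+1}^2(\mathfrak H^4+\mathfrak H_2^4+\mathfrak H_3^4+\mathsf O(\rad_b^{-8}))$ has vanishing linearized self-dual curvature at leading order since $\mathcal R^{+,(1)}_{\eucl}(\mathfrak H^4)=0$, so $W^+$ stays $o(1)$-close to $W^+(\hat g^k_{t,v})$. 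On the deepest region $\rad_b<\delta^{-1}$ one has $W^+(g^{A_{k+1}}_{t,v})=W^{+,(1)}_{\bar g_{v_{k+1}}}(h_2)+\mathsf O(T_{k+1})$; here the crucial input is Lemma \ref{self-dual-curvature}, which (after passing to a hyper-K\"ahler cover as in the proof of \cite[Corollary 6.4]{ozu2}) forces $\mathcal R^{+,(1)}_{\bar g_{v_{k+1}}}(h_2)$ to have the same eigenvalues as $\mathcal R^+_{\hat g^k_{t,v}}(p_{k+1})$, which by the inductive hypothesis are arbitrarily close to $(0,0,\lambda)$ — the $(0,0,\ast)$ pattern coming from the admissibility (which forces a zero eigenvalue of $\mathcal R^+$ at each singular point via \cite{ozu2,biq2}) together with Theorem \ref{hemhaw}(ii). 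Hence the eigenvalues of $W^+(\hat g^{k+1}_{t,v})$ tend to $(-\lambda/3,-\lambda/3,2\lambda/3)$ there, and the estimate \eqref{estimate W+ partiel} transports this control from $g^{A_{k+1}}_{t,v}$ to $\hat g^{k+1}_{t,v}$.

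The main obstacle I anticipate is making the induction on \emph{depth} genuinely uniform: at level $k$ the bubble being glued is at scale $T_{k+1}$, which is \emph{much smaller} than the scales already present, so one must check that attaching it produces only a negligible perturbation of $\hat g^k_{t,v}$ at its \emph{remaining} singular points (this is exactly why the depth ordering, smallest-last in the earlier sections but largest-first here, is forced), and that the error terms accumulated over all $n$ levels still go to zero. Concretely, the delicate points are: (i) verifying that the expansion \eqref{dvp gktv} of $\hat g^k_{t,v}$ near $p_{k+1}$ really does have Euclidean leading term with the same structural coefficients $\mathfrak H_i(k)$ the local construction needs — this uses that $\hat g^k_{t,v}$ solves the Einstein-modulo-obstructions equation and that the obstruction space is built from the already-resolved bubbles; (ii) controlling the tree-to-single-ALE approximation uniformly in $v_b$ via Proposition \ref{prop controle KRFALE} and \eqref{control term r-4}, so that the $O(T_{k+1}^2 r^{-4})$ variation at the outer end of the tree does not spoil \eqref{controle arbre en r2}; and (iii) ensuring the weighted-norm bookkeeping ($\rad_D^{-2}C^\alpha_\beta$ versus $C^0$, with the factor $t^{-1}$ in \eqref{estimate W+ partiel}) closes, i.e.\ that the $o(T_{k+1})$ gains beat the $T_{k+1}^{-1}$ losses. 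All three are quantitative refinements of arguments already in \cite{ozu2,ozu3}, so the work is in assembling them carefully rather than in any new idea.
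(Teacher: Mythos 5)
Your overall strategy is correct and matches the paper's: induction on the depth $k$, with the base case trivial, and at each step building an approximate Einstein-modulo-obstructions metric $g^{A_{k+1}}$ so that \cite[Theorem 4.10 and Proposition 5.11]{ozu2} control $\hat g^{k+1}_{t,v}$ in $o(T_{k+1})$, then transporting the $W^+$-eigenvalue estimate via a partial-desingularization version of \eqref{estimate W+}. Your handling of the curvature transfer (hyper-K\"ahler cover, Lemma \ref{self-dual-curvature}, the $(0,0,\lambda)$ eigenvalue pattern from admissibility plus Theorem \ref{hemhaw}) is the right argument.

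The genuine gap is in the construction of $g^{A_{k+1}}$ when $p_{k+1}$ lies on an already-attached bubble rather than on $X$ itself. You claim one can proceed \emph{exactly as in Section \ref{section construction gA}} with $\bar g_{v_o}$ replaced by $\hat g^k_{t,v}$, citing the expansion $\hat g^k_{t,v}=\eucl+\mathfrak H_2(k)+\cdots$. But that expansion is only valid near singular points \emph{of $X$}; in the tree case the relevant expansion near $p_{k+1}$, after rescaling by $T_k^{-1}$, is rather $\bar g_{B_k}+\mathfrak K_0+\mathfrak K_2+\mathfrak K_3+\mathfrak K_4+\mathsf O(T_k r^5)$, with $|\mathfrak K_i|\lesssim T_k r^i$. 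The constant term $\mathfrak K_0$ (of size $T_k$, not zero) is a new feature that has no counterpart in the single-singularity construction, and the paper's approximate metric incorporates it through the tensors $k_0,\dots,k_4$ solving a coupled system involving $Q_{\bar g_{v_{b_2}}}(k_0,\cdot)$ terms, and through the extra correction $\Upsilon_t(\bar g_{B_{k+1}}-\bar g_{B_k})$ in the outer region (using \eqref{control term r-4}), together with a modified gluing $\widetilde{\#}$ at scale $T_k^{1/2} t_{k+1}^{2/9}$ chosen precisely so the competing error terms close. Without these, the approximate metric you build would not reach $o(T_{k+1})$ accuracy. You also omit the key technical tool Lemma \ref{lem sum einstein def}, which is what allows one to combine the exact equations satisfied by $\hat g^k$ and by $\bar g_{B_{k+1}}$ and thereby avoid accumulating errors in pure powers of $T_k$ or of $t_{k+1}$. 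Your ``delicate point (i)'' gestures at the right worry, but the fix is not that the expansion has the form you wrote; it is that it does not, and the new $\mathfrak K_i$-driven terms must be built in from the start.
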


\subsubsection*{The iterative step}

If we have previously defined $ g^{D_k}_{t,v}$, $g^{A_k}_{t,v}$ and $\hat{g}^{k}_{t,v}$, and $\Tilde{\mathfrak{O}}(g^{A_k}_{t,v})$ in a suitable manner, we  may then define $g^{A_{k+1}}_{t,v}$ so as  to control $\hat{g}^{k+1}_{t,v}$.

If the next considered singular point $p_{k+1}$ is a singular point of $X$, then we define $g^{A_{k+1}}_{t,v}$ and obtain a control of $\hat{g}^{k+1}_{t,v}$ and its curvature exactly as in Section \ref{indicative}. Here, we will therefore consider the remaining case where the singular point $p_{k+1}$ belongs to another Ricci-flat ALE metric of the tree of singularities.

\begin{note}
    In order to simplify  our notation in the remainder of the argument, we will drop (when not necessary) the $(t,v)$-dependence of our tensors and in particular denote $g_{b_i}$ for $\Bar{g}_{v_{b_i}}$.
\end{note}

Assume that:
\begin{itemize}
        \item $g^{D_k}$ and $\hat{g}^k$ defined from the previous steps, satisfy:
        \begin{equation}\label{Phi hatgk tree}
            \mathscr{F}_{g^{D_k}}(\hat{g}^k) = \mathfrak{w}_{0}+\mathfrak{w}_{2}+ \mathsf{O}(T_kr_{b_k}^3) \in \Tilde{\mathfrak{O}}(g^{A_k}),
        \end{equation}
        with $|\mathfrak{w}_{l}|\leqslant 
C_lT_kr_{b_k}^l$ for $C_l>0$,
        \item $g^{D_{k+1}}= g^{D_k} \;\#_{p_{k+1},T_{k}^2t_{k+1}} 
 \; g_{{b_{k+1}}}$ where $g_{{b_{k+1}}}$ is a Ricci-flat ALE metric with a remaining singular point $p_1$, and
        \item $\hat{g}^{k+1}$ the Einstein-modulo-obstructions perturbation of $g^{D_{k+1}} =g^{D_{k+1}}_{t,v} $ transversely to $\Tilde{\mathfrak{O}}(g^{A_k})$.
    \end{itemize}
Our goal will now be to obtain curvature estimates for $\hat{g}^{k+1}$ using  the construction of an approximate Einstein-modulo-obstructions metric. We will in particular show that the above properties stay true by iteration.

For this we will also use the following metrics. 

\begin{itemize}
        \item $g^{B_{k+1}}$, the na\"{\i}ve gluing of the tree of singularity at which $g_{b_{k+1}}$ is glued in $g^{D_{k+1}}$,
        \item $g^{B_{k}}$, the na\"{\i}ve gluing of the tree of singularity at which $g_{b_{k+1}}$ is glued in $g^{D_{k+1}}$, but \textit{without} $g_{b_{k+1}}$, and
        \item $\bar{g}_{B_{k+1}}$ (resp. $\bar{g}_{B_k}$) the  Ricci-flat K\"ahler ALE perturbation of $g^{B_{k+1}}$ (resp. $g^{B_{k}}$).
\end{itemize}

The rest of the section consists in constructing a suitable approximate Einstein-modulo-obstructions metric $g^{A_{k+1}}$ controlling $\hat{g}^{k+1}$ in $o(T_{k+1})$ in order to compensate the $\rad_D^{-2}$ term of our function space. For this, we will leverage the fact that both $\hat{g}^k$ and $\bar{g}_{B_k}$ satisfy \textit{exact} equations in order to rule out errors in powers of $T_k$ alone or in powers of $t_{k+1}$ alone. The following key technical lemma follows from the expansion of Ricci curvature at a metric $g$ in the (small) direction $h$: 
$$\Ric(g+h) = \Ric(g)+(g+h)^{-1}*\Rm(g)+(g+h)^{-2}*\nabla^2h+(g+h)^{-3}*\nabla h *\nabla h,$$ where $*$ corresponds to contractions of tensors.
\begin{lem}\label{lem sum einstein def}
    Let $ g $ be an Einstein metric on an open set $U\subset \mathbb{R}^4$ such that $|\nabla^l_{\eucl}(g-\eucl)|_{\eucl}<\frac{1}{100}$ for $l\leqslant L$. Assume that there exist $2$-tensors $h_1$ and $h_2$ small enough in $C^L(U,\eucl)$ such that $\mathscr{F}_g(g+h_i) = w_i$ for $i\in\{1,2\}$.

    Then, we have the following control: there exists a $C_L>0$, independent of $g,h_1$, and $h_2$, such that, for $l\in \{0,...,\}$,
    $$
 \left|\nabla^l_g\left(\mathscr{F}_g(g+h_1+h_2) - w_1-w_2\right)\right|_g \leqslant
C_L \sum_{l_1+l_2=l \,\text{or}\, (l+2)} |\nabla_g^{l_1}h_1||\nabla_g^{l_2}h_2| ~ . 
$$
\end{lem}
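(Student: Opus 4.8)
The plan is to exploit the quasi-linear structure of $\mathscr{F}_g$ in order to recognize the left-hand side as a \emph{mixed second difference}, and then to show that any such difference consists only of terms that are ``genuinely mixed'' in $h_1$ and $h_2$, with a controlled derivative count. First I would note that $\mathscr{F}_g(g)=0$: in \eqref{gaugedEinstein} the gauge term $\delta^{*}_{g}\delta_{g_{0}}(g)$ vanishes when $g_{0}=g$ because $\delta_{g}g=0$, while $\mathscr{E}(g)+\lambda g=0$ since $g$ is Einstein with constant $\lambda$. Hence the quantity to be estimated equals
\[
\mathscr{F}_g(g+h_1+h_2)-\mathscr{F}_g(g+h_1)-\mathscr{F}_g(g+h_2)+\mathscr{F}_g(g),
\]
which is exactly the second difference of the map $\Psi:h\mapsto\mathscr{F}_g(g+h)$ evaluated on the pair $(h_1,h_2)$.

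Next I would record the structure of $\Psi$ as a function of the $2$-jet of $h$. The Christoffel-difference identity $\Gamma(g+h)-\Gamma(g)=(g+h)^{-1}\ast\nabla h$ and the Ricci expansion recalled just before the statement (which writes $\Ric(g+h)$ as $\Ric(g)$ plus a term affine in $\nabla^{2}h$ and a term quadratic in $\nabla h$, both with coefficients rational in $(g+h)^{-1}$), together with the induced expansions of $s(g+h)=\tr_{g+h}\Ric(g+h)$ and of the gauge term $\delta^{*}_{g+h}\delta_{g}(g+h)=g^{-1}\ast\nabla^{2}h+(g+h)^{-1}\ast\nabla h\ast\nabla h$, show that
\[
\Psi(h)=\mathbf{A}(h)\ast\nabla^{2}h+\mathbf{B}(h)\ast\nabla h\ast\nabla h+\mathbf{C}(h),
\]
where $\mathbf{A},\mathbf{B},\mathbf{C}$ are universal real-analytic functions of the pointwise value of $h$ alone (no derivatives of $h$), built from $g$, $g^{-1}$ and the curvature of $g$, and where crucially $\mathbf{C}(0)=\Psi(0)=\mathscr{F}_g(g)=0$. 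In particular $\Psi$ is affine in $\nabla^{2}h$ and quadratic in $\nabla h$, and on a fixed ball $\{|h|_{g}\le\varepsilon_{0}\}$ the tensors $\mathbf{A},\mathbf{B},\mathbf{C}$ and all of their $h$-derivatives are bounded by universal constants times powers of the curvature of $g$, which the hypothesis $|\nabla^{l}_{\eucl}(g-\eucl)|<\tfrac1{100}$ keeps under control.

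The heart of the argument is then a monomial-by-monomial computation of the mixed second difference. For a monomial $P(h)=\mathbf{m}(h)\ast(D_{1}h)\ast\cdots\ast(D_{r}h)$ with each $D_{i}\in\{\mathrm{id},\nabla,\nabla^{2}\}$ and $\mathbf{m}$ analytic, multilinearity in the $D_{i}h$-slots together with a two-parameter Taylor expansion with integral remainder of $\mathbf{m}(sh_{1}+th_{2})$ shows that $P(h_{1}+h_{2})-P(h_{1})-P(h_{2})+P(0)$ is a finite sum of terms, each containing at least one factor $\nabla^{a}h_{1}$ \emph{and} at least one factor $\nabla^{b}h_{2}$; any summand whose derivative factors all come from $h_{1}$ alone, or all from $h_{2}$ alone, telescopes out. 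Applied to the three monomial types above, $\mathbf{C}(h)$ contributes $O(|h_{1}||h_{2}|)$, the term $\mathbf{A}(h)\ast\nabla^{2}h$ contributes $O(|h_{2}||\nabla^{2}h_{1}|+|h_{1}||\nabla^{2}h_{2}|)$ plus summands carrying an extra factor $|h_{i}|\le\varepsilon_{0}$, and $\mathbf{B}(h)\ast\nabla h\ast\nabla h$ contributes $O(|\nabla h_{1}||\nabla h_{2}|)$ plus summands carrying an extra factor $|h_{i}|$ or $|\nabla h_{i}|\le\varepsilon_{0}$. Applying $\nabla^{l}_{g}$ via the Leibniz rule, commuting covariant derivatives at the cost of curvature factors $\Rm(g)\ast\cdots$ (harmless by the hypothesis on $g$), and absorbing each excess uniformly small factor into the constant, one finds that every surviving term is a product $|\nabla^{l_{1}}_{g}h_{1}|\,|\nabla^{l_{2}}_{g}h_{2}|$ with $l_{1}+l_{2}$ equal to $l$ or $l+2$, the two possibilities reflecting that $\mathscr{F}_g$ is second order and that the $l$ extra derivatives therefore land on at most two intrinsic derivative slots. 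This yields the estimate, with $C_{L}$ depending only on $L$ through the fixed bounds on $\mathbf{A},\mathbf{B},\mathbf{C}$ and on the curvature of $g$ and its covariant derivatives up to order $L$.

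The step I expect to be the main obstacle is exactly this bookkeeping: one must check that, after differentiating the mixed second difference and absorbing the uniformly small factors $|h_{i}|,|\nabla h_{i}|\le\varepsilon_{0}$, every remaining term still contains both an $h_{1}$- and an $h_{2}$-factor and has total derivative order precisely $l$ or $l+2$, and that the resulting constant is genuinely independent of $g$, $h_{1}$ and $h_{2}$. All of this rests on the precise quasi-linear structure of $\mathscr{F}_g$ (affine in $\nabla^{2}h$, quadratic in $\nabla h$), which is exactly the content of the Ricci expansion recalled before the statement; no global information about $X$ or about the particular metrics enters.
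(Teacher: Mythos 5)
The paper offers no written proof of this lemma; it is stated immediately after the displayed Ricci expansion
\[
\Ric(g+h)=\Ric(g)+(g+h)^{-1}\ast\Rm(g)+(g+h)^{-2}\ast\nabla^2 h+(g+h)^{-3}\ast\nabla h\ast\nabla h,
\]
with only the remark that the lemma ``follows from'' this expansion. Your argument is precisely the fleshing-out of that remark, and the structure you isolate is the right one: rewrite the quantity as the mixed second difference
\[
\mathscr{F}_g(g+h_1+h_2)-\mathscr{F}_g(g+h_1)-\mathscr{F}_g(g+h_2)+\mathscr{F}_g(g),
\]
using $\mathscr{F}_g(g)=\mathscr{E}(g)+\lambda g+\delta^*_g\delta_g g=0$; record that $\Psi(h)=\mathscr{F}_g(g+h)$ is affine in $\nabla^2 h$, quadratic in $\nabla h$, with coefficients depending only on the pointwise value of $h$; and observe that the second difference of such a quasilinear expression is built exclusively of terms that contain at least one $h_1$-factor and one $h_2$-factor, which after applying $\nabla^l$ and using the Leibniz rule (plus commutation of covariant derivatives against $\Rm(g)$, controlled by the $C^L$ bound on $g-\eucl$) gives the stated bilinear bound. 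This is, up to detail, the only natural proof, and it agrees with what the paper intends.

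The one place to be more careful is the final bookkeeping, which you flag yourself as the main obstacle but then pass over a bit quickly. The bilinear terms in the Taylor expansion of $\Psi$ — namely $h_1 h_2$, $h_1\ast\nabla^2 h_2$, $\nabla^2 h_1\ast h_2$, and $\nabla h_1\ast\nabla h_2$ — really do, after $\nabla^l$, produce exactly the products $|\nabla^{l_1}h_1|\,|\nabla^{l_2}h_2|$ with $l_1+l_2\in\{l,\,l+2\}$. But the cubic-and-higher terms coming from the full analyticity of $\mathbf{A},\mathbf{B},\mathbf{C}$ in $h$ (e.g.\ a term of schematic type $h_1\ast h_1\ast\nabla^2 h_2$) give, after differentiating and then grouping factors to leave a single $h_1$-slot and a single $h_2$-slot, products $|\nabla^{l_1}h_1|\,|\nabla^{l_2}h_2|$ with $l_1+l_2$ potentially \emph{strictly between} $0$ and $l+2$, not only in $\{l,\,l+2\}$; absorbing an extra factor $|\nabla^a h_i|\le\varepsilon_0$ with $a>0$ costs you derivative count. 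So ``every surviving term has total derivative order precisely $l$ or $l+2$'' is not literally true as stated. This is arguably an imprecision latent in the lemma statement itself (a sum over $l_1+l_2\le l+2$ would be cleaner and still suffices for the downstream application, where the $r$-weights make the lower values of $l_1+l_2$ harmless), and it does not invalidate the argument; but in a complete write-up you should either prove the relaxed inequality with $l_1+l_2\le l+2$, or explain that the higher-than-bilinear contributions are already dominated, in the intended weighted norms, by the bilinear ones.
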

\begin{remark}
    The assumption on $g$  always holds  in suitably rescaled harmonic coordinates, whether our space is a manifold or an orbifold; cf. e.g.  \cite{jos}.
\end{remark}

Near the singular point $p_{k+1}$, we have the following expansions of the relevant  metrics:
\ $$\Bar{g}_{B_k} = \eucl +  \mathfrak{H}_2+ \mathfrak{H}_3+ \mathfrak{H}_4+\mathsf{O}(r_{b_1}^5)$$
    $$T_{k}^{-1}\hat{g}^k = \Bar{g}_{B_k} + \mathfrak{K}_0 + \mathfrak{K}_2 + \mathfrak{K}_3 + \mathfrak{K}_4 + \mathsf{O}(T_kr_{B_k}^5),$$
    where $|\mathfrak{K}_i|\leqslant T_kr_{B_k}^i$.
In particular, by the equation satisfied by $\hat{g}^k_{t,v}$ and \eqref{Phi hatgk tree}, the above tensors satisfy the following equalities:
\begin{itemize}
    \item $P_\eucl \mathfrak{K}_0 =0$
    \item $P_\eucl \mathfrak{K}_2 + Q_\eucl(\mathfrak{K}_0,\mathfrak{H}_2) = \Lambda (\eucl+ \mathfrak{K}_0) + \mathfrak{w}_0$,
    \item $P_\eucl \mathfrak{K}_3 + Q_\eucl(\mathfrak{K}_0,\mathfrak{H}_3) =0$,
    \item $P_\eucl \mathfrak{K}_4 + Q_\eucl(\mathfrak{K}_0,\mathfrak{H}_4) + Q_\eucl(\mathfrak{H}_2, \mathfrak{K}_2) + Q_\eucl(\mathfrak{K}_0,\mathfrak{H}_2,\mathfrak{H}_2) +Q_\eucl(\mathfrak{K}_0,\mathfrak{K}_2,\mathfrak{H}_2) +Q_\eucl(\mathfrak{K}_0,\mathfrak{K}_2,\mathfrak{K}_2) = \Lambda (\mathfrak{H}_2+ \mathfrak{K}_2) + \mathfrak{w}_2$.
\end{itemize}

Extending the results of Section \ref{section first def approx}, we define the following tensors extending the above ones: given the $h_i$ of \eqref{def-h_j}, and $w_0$ and $w_2$ extensions of $\mathfrak{w}_0$ and $\mathfrak{w}_2$ of \eqref{Phi hatgk tree} as in Section \ref{section extension obstructions}, 
\begin{itemize}
    \item $P_{\bar{g}_{v_{b_2}}} k_0 =0$
    \item $P_{\bar{g}_{v_{b_2}}} k_2 + Q_{\bar{g}_{v_{b_2}}}(k_0,h_2) - \Lambda ({\bar{g}_{v_{b_2}}}+ k_0) - w_0 = \mho_2 \in \mathfrak{O}(\bar{g}_{v_{b_2}})$,
    \item $P_{\bar{g}_{v_{b_2}}} k_3 + Q_{\bar{g}_{v_{b_2}}}(k_0,h_3) =0$,
    \item $P_{\bar{g}_{v_{b_2}}} k_4 + Q_{\bar{g}_{v_{b_2}}}(k_0,h_4) + Q_{\bar{g}_{v_{b_2}}}(h_2, h_2) + Q_{\bar{g}_{v_{b_2}}}(k_0,h_2,h_2) +Q_{\bar{g}_{v_{b_2}}}(k_0,k_2,h_2) +Q_{\bar{g}_{v_{b_2}}}(k_0,k_2,k_2) - \Lambda (h_2+ k_2) - w_2 = \mho_4 \in \mathfrak{O}(\bar{g}_{v_{b_2}})$.
    \\
\end{itemize}

Define $\Upsilon_t$ to be a cut-off function supported on $r_{D_k}<2t_{k_0}^{\frac{1}{4}}$ and equal to $1$ on $r_{D_k}<t_{k_0}^{\frac{1}{4}}$ satisfying for some $C_l$ independent of $t$, or $r_{D_k}$, 
\begin{equation}\label{control cutoff t} 
    \nabla^l_{g^D_k} \Upsilon_t \leqslant C_lr_{D_k}^l.
\end{equation} 
Next,  define an approximate Einstein-modulo-obstructions metric as:
$$ g^{A_{k+1}}:= \left(\hat{g}^k + \Upsilon_t(\Bar{g}_{B_{k+1}}-\Bar{g}_{B_k})\right) \widetilde{\#}_{p_{k+1},T_{k},t_{k+1}} \left(T_{k+1}^{-1}\Bar{g}_{B_{k+1}} + k_0+t_{k+1}k_2+t_{k+1}^{3/2}k_3+t_{k+1}^2k_4\right),$$
where $\widetilde{\#}$ is essentially  the same operation introduced in  Definition \ref{def naive desing}, but with the gluing scale $t^{\frac{1}{4}}$ replaced  by $T_{k}^{\frac{1}{2}}t_{k+1}^{\frac{2}{9}}$.
Finally,  define the obstruction space $\Tilde{\mathfrak{O}}(g^{A_{k+1}})$ as the set of $\Tilde{\mathfrak{o}} + \Tilde{\mathbf{w}}$ for $ \mathfrak{o}\in \mathfrak{O}(g_{b_{k+1}})$ and $\mathbf{w}\in \tilde{\mathfrak{O}}(g^{A_k})$, where $\Tilde{\mathfrak{O}}$ and $\Tilde{\mathbf{w}}$ are defined as in Definition \ref{def extension obst} We also extend the definition of $\tilde{\mathfrak{O}}(g^{A_{k+1}}_{t,v})$ exactly as in Section \ref{section extension obstructions} replacing $\Bar{g}_{v_o}$ by $g^{A_{k}}_{t,v}$ and $\mathfrak{O}(g_\infty)$ by $\tilde{\mathfrak{O}}(g^{A_{k}}_{t,v})$.


\subsection*{Approximate metrics with trees of singularities}

We now control how close we are to the actual Einstein-modulo-obstructions metric. This proves the last point of Proposition \ref{prop recurrence arbre}.
\begin{prop}
    We have the following control:
    $$ \|\mathscr{F}_{g^{D_{k+1}}}(g^{A_{k+1}})-\mho^{A_{k+1}}\|_{\rad_D^{-2}C^\alpha_\beta(g^{D_{k+1}})} \leqslant o(T_{k+1}). $$
    Thus, by \cite[Proposition 5.11]{ozu2}, we have 
    $\|\hat{g}^{k+1}_{t,v} - g^{A_{k+1}}\|_{C^{2,\alpha}_{\beta,*}(g^{D_{k+1}})}=o(T_{k+1})$. In particular, since $r_{D_{k+1}}\geqslant T_{k+1}$ there is $C>0$ such that for any $l\in\{0,1,2,3\}$,
    $$
            |\nabla^l(\hat{g}^{k+1}_{t,v} - g^{D_{k+1}})|_{g^{D_{k+1}}}\leqslant C r_{D_{k+1}}^2.$$
\end{prop}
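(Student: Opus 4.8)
The plan is to prove the weighted bound on $\mathscr{F}_{g^{D_{k+1}}}(g^{A_{k+1}})-\mho^{A_{k+1}}$ by the same region-by-region analysis that established Propositions \ref{controle desing un point} and \ref{controle desing partielle}, the only genuinely new feature being the extra annulus created by the operation $\widetilde{\#}$ at scale $T_k^{1/2}t_{k+1}^{2/9}$. Away from $p_{k+1}$, the na\"{\i}ve gluing leaves $g^{D_{k+1}}=g^{D_k}$ unchanged and $g^{A_{k+1}}=\hat g^k+\Upsilon_t(\bar g_{B_{k+1}}-\bar g_{B_k})$, so I would start from the \emph{exact} equation \eqref{Phi hatgk tree} for $\hat g^k$ together with the fact that $\bar g_{B_{k+1}}$ and $\bar g_{B_k}$ are genuine Ricci-flat ALE metrics with $\bar g_{B_{k+1}}-\bar g_{B_k}=\mathsf{O}(T_{k+1}^2r_{B_k}^{-4})$ by \eqref{control term r-4}, and feed the pair $\bigl(\hat g^k,\ \Upsilon_t(\bar g_{B_{k+1}}-\bar g_{B_k})\bigr)$ into Lemma \ref{lem sum einstein def}. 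That lemma shows the error of $\mathscr{F}_{g^{D_k}}$ on the sum is controlled by products of gradients, hence carries a factor $T_{k+1}^2 r_{B_k}^{-4}$ and is $o(T_{k+1})$ in $\|\cdot\|_{\rad_D^{-2}C^\alpha_\beta}$ for $\beta<\tfrac{1}{4}$; the transition zone of $\Upsilon_t$ contributes only $\nabla^l\Upsilon_t$, estimated by \eqref{control cutoff t}, times the same small quantity, and is disposed of identically.

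In the deepest region around $p_{k+1}$, after rescaling by $T_{k+1}$ the metric $g^{A_{k+1}}$ becomes $\bar g_{B_{k+1}}+T_{k+1}(k_0+t_{k+1}k_2+t_{k+1}^{3/2}k_3+t_{k+1}^2k_4)$ and $g^{D_{k+1}}$ the corresponding rescaled tree, so the estimate collapses to the one already carried out in the one-point case: by construction the tensors $k_0,\dots,k_4$ solve the same linear systems that make $\mathscr{F}$ of the truncated expansion agree, modulo the obstruction terms $t_{k+1}\mho_2+t_{k+1}^2\mho_4$ that are absorbed into $\mho^{A_{k+1}}$, with zero up to an $o(T_{k+1})$ remainder, exactly as in the derivation of \eqref{controlled-Einstein-modulo-obstructions}. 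The intermediate zone $r_{b_{k+1}}\gg 1$ on the bubble side is covered by Corollary \ref{controls ALE bar gvb}.

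The main obstacle is the $\widetilde{\#}$ gluing annulus $r_{D_k}\sim T_k^{1/2}t_{k+1}^{2/9}$, where the two models must be matched. Here I would expand both sides to sufficiently high order: on the orbifold side $T_k^{-1}\hat g^k=\bar g_{B_k}+\mathfrak K_0+\mathfrak K_2+\mathfrak K_3+\mathfrak K_4+\mathsf{O}(T_kr_{B_k}^5)$ with $|\mathfrak K_i|\le T_kr_{B_k}^i$, and on the bubble side the expansion from Corollary \ref{controls ALE bar gvb}, while the $\mathfrak K_i$ and the $k_i$ satisfy matching pairs of linear equations. On subtracting, every term depending only on powers of $T_k$ or only on powers of $t_{k+1}$ cancels --- this is exactly why both $\hat g^k$ and $\bar g_{B_k}$ were arranged to satisfy \emph{exact} Einstein-modulo-obstruction equations, and it is what Lemma \ref{lem sum einstein def} is built to exploit --- so the residual mismatch in the annulus is a genuinely mixed quantity of size $\mathsf{O}(T_k^a t_{k+1}^b)$ with $a,b>0$, together with the truncation errors of the two expansions and the $\widetilde{\#}$ cut-off error. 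Evaluating all of these at $r_{D_k}\sim T_k^{1/2}t_{k+1}^{2/9}$, the exponent $\tfrac{2}{9}$ is precisely the value that makes each of them $o(T_{k+1})=o(T_kt_{k+1})$ in $\|\cdot\|_{\rad_D^{-2}C^\alpha_\beta}$; carrying out this bookkeeping is the delicate heart of the proof.

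Once the weighted estimate is in hand, \cite[Proposition 5.11]{ozu2} --- whose proof uses only that $\hat g^{k+1}_{t,v}$ is Einstein-modulo-obstructions, not that it is Einstein --- yields $\|\hat g^{k+1}_{t,v}-g^{A_{k+1}}\|_{C^{2,\alpha}_{\beta,*}(g^{D_{k+1}})}=o(T_{k+1})$. By the definition of the function space $C^{2,\alpha}_{\beta,*}$ of Appendix \ref{function spaces} and the inequality $r_{D_{k+1}}\ge T_{k+1}$ valid everywhere on $M$, this converts into the pointwise bound $|\nabla^l(\hat g^{k+1}_{t,v}-g^{A_{k+1}})|\le C\,r_{D_{k+1}}^2$ for $l\le 3$; since each building block $\mathfrak H_i,\mathfrak K_i,h_i,k_i$ enters $g^{A_{k+1}}-g^{D_{k+1}}$ at order $r_{D_{k+1}}^2$ or smaller at its own scale, the triangle inequality then gives $|\nabla^l(\hat g^{k+1}_{t,v}-g^{D_{k+1}})|_{g^{D_{k+1}}}\le C\,r_{D_{k+1}}^2$, completing the inductive step for the second assertion of Proposition \ref{prop recurrence arbre}.
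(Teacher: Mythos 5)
Your proof follows essentially the same region-by-region strategy as the paper: the region away from $p_{k+1}$ handled via Lemma \ref{lem sum einstein def} together with \eqref{control term r-4} and \eqref{controle arbre en r2}, the cut-off zone handled via \eqref{control cutoff t}, the rescaled deep region reduced to the one-point estimate \eqref{controlled-Einstein-modulo-obstructions}, the $\widetilde{\#}$ annulus bounded by comparing the two truncated expansions at scale $T_k^{1/2}t_{k+1}^{2/9}$, and finally \cite[Proposition 5.11]{ozu2} to convert the weighted bound on $\mathscr{F}_{g^{D_{k+1}}}(g^{A_{k+1}})-\mho^{A_{k+1}}$ into the $C^{2,\alpha}_{\beta,*}$ bound on $\hat g^{k+1}_{t,v}-g^{A_{k+1}}$. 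Your framing of the $\widetilde{\#}$-annulus estimate as a cancellation of pure-$T_k$ and pure-$t_{k+1}$ terms is a slightly different emphasis than the paper's direct $T_kr_{b_k}^5+T_kr_{b_{k+1}}^{-4}$ bound, and the final triangle-inequality step (bounding $g^{A_{k+1}}-g^{D_{k+1}}$ separately to deduce the $r_{D_{k+1}}^2$ bound on $\hat g^{k+1}_{t,v}-g^{D_{k+1}}$) usefully makes explicit something the paper leaves implicit, but the underlying argument is the same.
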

\begin{proof}
There are three regions to consider, and we will control our operator on each of them.

\begin{enumerate}
    \item In the region where we have $g^{A_{k+1}}=\hat{g}^k + \Upsilon_t(\Bar{g}_{B_{k+1}}-\Bar{g}_{B_k})$, thanks to the local controls of Lemma \ref{lem sum einstein def} applied to $g$ \eqref{control term r-4} 
 and \eqref{controle arbre en r2}  we find:
    $$ r_{D_k}^{2+l}\left|\nabla^l_{g^D_k}\left(\mathscr{F}_{g^{D_k}}(g^{A_{k+1}})-\mho^{A_{k+1}}\right)\right|_{g^{D_k}} = \mathsf{O}(T_{k+1}^2 r_{D_k}^{-4} r_{D_k}^2).$$

\item In the region $t_{k_0}^{\frac{1}{4}}<r_{D_k}<2t_{k_0}^{\frac{1}{4}}$, thanks to \eqref{control term r-4} and the controls \eqref{control cutoff t} of the cut-off function, we find:
$$ r_{D_k}^{2+l}\left|\nabla^l_{g^D_k}\left(\mathscr{F}_{g^{D_k}}(g^{A_{k+1}})-\mho^{A_{k+1}}\right)\right|_{g^{D_k}} = \mathsf{O}(T_{k+1}^2 r_{D_k}^{-4}) = \mathsf{O}(T_{k+1}^2 t_{k_0}) = o(T_{k+1}).$$

    \item Since   $T_{k+1}^{-1}g^A_{t,v}=T_{k+1}^{-1}\Bar{g}_{B_{k+1}} + k_0+t_{k+1}k_2+t_{k+1}^{3/2}k_3+t_{k+1}^2k_4$, using \eqref{controle arbre en r2}, we find:
    $$ r_{D_{k+1}}^{2+l}\left|\nabla^l_{g^D_{k+1}}\left(\mathscr{F}_{g^{D_{k+1}}}(g^{A_{k+1}})-\mho^{A_{k+1}}\right)\right|_{g^{D_{k+1}}} = \mathsf{O}(T_{k}  t_{k+1}^{\frac{5}{2}}r_{b_{k+1}}^5).$$
    Here, the error comes from multilinear terms involving the following tensors such that for all $l\leqslant 3$, there exists $C_l>0$ so that $$r_{b_{k+1}}^l|\nabla^l_{b_{k+1}}(t_{k+1}^{\frac{i}{2}} k_i)|_{g_{b_{k+1}}}\leqslant  T_{k}t_{k+1}^{\frac{i}{2}}r_{b_{k+1}}^i$$ by \eqref{controle arbre en r2}, and $r_{b_{k+1}}^l|\nabla^l_{g_{b_{k+1}}}(t_{k+1}^{-1}\Bar{g}_{B_{k+1}}- g_{b_{k+1}})|_{g_{b_{k+1}}}\leqslant t_{k+1}r_{b_{k+1}}^2$ by Proposition \ref{prop controle KRFALE}.

    \item Lastly, in the cut-off region $T_{k}^{\frac{1}{2}}t_{k+1}^{\frac{2}{9}}<r_{D_{k+1}}<2T_{k}^{\frac{1}{2}}t_{k+1}^{\frac{2}{9}}$, the difference of the metrics is in:
    $$ T_kr_{b_k}^5 + T_kr_{b_{k+1}}^{-4}.$$
\end{enumerate}

Putting all of this together, we consequently obtain  the stated estimate by gluing at scale $t_{k+1}^{\frac{1}{5}}\ll r_{b_{k+1}}\ll t_{k+1}^{\frac{1}{4}}$, which is satisfied by the scale of gluing chosen in the definition of $g^{A_{k+1}}$.
\end{proof}

Mimicking the proof of the third point of Proposition \ref{controle desing partielle} now yields the following result.

\begin{cor}\label{controle curvature step k to k+1}
   If the eigenvalues  of the self-dual  Weyl curvature $W^+$ of $\hat{g}^k$ are, are   uniformly  close to those of $g_\infty$ at some corresponding  point of $X$, 
       then  the  eigenvalues  of the self-dual  Weyl curvature $W^+$  of $\hat{g}^{k+1}$  also have this property,  with  an error that uniformly  tends to zero as $(t,v)\to 0$.
\end{cor}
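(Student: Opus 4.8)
The plan is to mimic the proof of part \eqref{three} of Proposition \ref{controle desing partielle}, but now carefully tracking the powers of $T_k$ and $t_{k+1}$ that arise because the bubble $g_{b_{k+1}}$ is being glued at the \textit{composite} scale $T_k^2 t_{k+1}$ rather than at an absolute scale. First I would decompose $\hat{g}^{k+1}$ by regions exactly as in the preceding Proposition: the region $r_{D_k}>2 T_k^{1/2}t_{k+1}^{2/9}$ where $g^{A_{k+1}} = \hat{g}^k + \Upsilon_t(\bar{g}_{B_{k+1}}-\bar{g}_{B_k})$; the cut-off annulus $T_k^{1/2}t_{k+1}^{2/9}<r_{D_{k+1}}<2T_k^{1/2}t_{k+1}^{2/9}$; and the deep region $r_{D_{k+1}}<2T_k^{1/2}t_{k+1}^{2/9}$ where $T_{k+1}^{-1}g^{A_{k+1}} = T_{k+1}^{-1}\bar{g}_{B_{k+1}} + k_0 + t_{k+1}k_2 + t_{k+1}^{3/2}k_3 + t_{k+1}^2 k_4$.

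In the first region, by \eqref{control term r-4} the difference $\bar{g}_{B_{k+1}}-\bar{g}_{B_k}$ is of size $T_{k+1}^2 r_{D_k}^{-4}$, and since the first-order variation of the self-dual curvature $\mathcal{R}^{+,(1)}_{\eucl}$ annihilates harmonic-in-$\rad^{-4}$ tensors (the $\mathfrak{H}^4$ phenomenon exploited in the proof of Corollary \ref{curvature-metrics}), the self-dual Weyl curvature of $g^{A_{k+1}}$ differs from that of $\hat{g}^k$ by an $\mathsf{O}(T_{k+1}^2 r_{D_k}^{-6}) = o(1)$ error; hence by the inductive hypothesis its $W^+$-eigenvalues are uniformly close to those of $g_\infty$ at a corresponding point. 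In the deep region, rescaling by $T_{k+1}^{-1}$ makes $g^{A_{k+1}}$ a small $C^3$-perturbation of $\bar{g}_{B_{k+1}}$ (with perturbation terms $t_{k+1}^{i/2}k_i$ of uniformly bounded size by \eqref{controle arbre en r2}), so $W^+(g^{A_{k+1}}) = W^{+,(1)}_{\bar{g}_{B_{k+1}}}(k_2) + \mathsf{O}(t_{k+1})$; then Lemma \ref{self-dual-curvature} — applied to the Ricci-flat K\"ahler ALE metric $\bar{g}_{B_{k+1}}$, after passing to a hyper-K\"ahler cover if needed — tells us $\mathcal{R}^{+,(1)}_{\bar{g}_{B_{k+1}}}(k_2)$ has \textit{constant} eigenvalues equal to those of $\mathcal{R}^+$ of the ambient metric $\hat{g}^k$ at $p_{k+1}$, which by induction are arbitrarily close to $(0,0,\lambda)$, forcing the $W^+$-eigenvalues to tend to $(-\lambda/3,-\lambda/3,2\lambda/3)$. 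The cut-off annulus contributes only negligible errors once one uses the controls \eqref{control cutoff t} on $\Upsilon_t$ together with the metric-difference estimate of the previous Proposition. Finally, converting the $\rad_D^{-2}C^\alpha_\beta$-control of $\hat{g}^{k+1}-g^{A_{k+1}}$ into a $C^0$-control of the Weyl curvatures exactly as in \eqref{estimate W+ partiel} (using $\rad_D^2 \geq T_{k+1}$) transfers the eigenvalue statement from $g^{A_{k+1}}$ to $\hat{g}^{k+1}$.

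The main obstacle I anticipate is the deep-region step: one must verify that Lemma \ref{self-dual-curvature} genuinely applies with the \textit{composite} bubble $\bar{g}_{B_{k+1}}$ (an entire tree, not a single building block) playing the role of $g_b$, and that the ambient curvature value it inherits is precisely the value carried by $\hat{g}^k$ at $p_{k+1}$ rather than at a regular point of $X$. This requires knowing that the asymptotic curvature of $\bar{g}_{B_{k+1}}$ at its own singular point $p_{k+1}$ matches $\mathcal{R}^+(\hat{g}^k)(p_{k+1})$ — which is exactly the content of the matching conditions built into \eqref{Phi hatgk tree} and the expansions $T_k^{-1}\hat{g}^k = \bar{g}_{B_k} + \mathfrak{K}_0 + \cdots$ — and that the whole hyper-K\"ahler-cover reduction of the proof of \cite[Corollary 6.4]{ozu2} survives for a tree. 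Granting this, the eigenvalue bookkeeping is routine and the conclusion follows by induction on $k$, completing the proof of Proposition \ref{prop recurrence arbre} and hence the curvature control needed for the general case.
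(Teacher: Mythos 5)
Your proposal is correct and follows essentially the same route as the paper, which disposes of this Corollary with the single sentence ``Mimicking the proof of the third point of Proposition \ref{controle desing partielle} now yields the following result,'' i.e.\ use the inductive hypothesis on $\hat g^k$, re-run the local argument of Corollary \ref{curvature-metrics} region by region to control $W^+(g^{A_{k+1}})$, and then transfer to $\hat g^{k+1}$ via the $\rad_D^{-2}C^\alpha_\beta$-to-$C^0$ comparison as in \eqref{estimate W+ partiel}. Your spelled-out version is a faithful expansion of that pointer, and your closing remark correctly identifies the one genuinely new point one must check in the tree case --- that Lemma \ref{self-dual-curvature} applies to the composite Ricci-flat K\"ahler ALE metric $\bar g_{B_{k+1}}$ and that the matching data \eqref{Phi hatgk tree} together with the $\mathfrak{K}_i$-expansion of $T_k^{-1}\hat g^k$ furnish the asymptotic $\mathcal R^+$-value near $p_{k+1}$ that the lemma needs.
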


\subsection*{Conclusion of the proof}

Let us now assume  that the $\lambda >0$ Einstein orbifold $(X,g_\infty )$ is 
 conformally K\"ahler,   and that there is 
an admissible  sequence $(M,g_i)$ with 
    $$d_{GH}\Big((M,g_i),(X,g_\infty)\Big)\to 0.$$
    By \cite{ozu1}, after passing to a subsequence indexed by $j$, and for sufficiently large $j$,  there thus exist na\"{\i}ve desingularizations
 $g^D_{t_j}$ such that $\|g_j-g^{D}_{t_j}\|_{C^{2,\alpha}_{\beta,*}(g^D_{t_j})} \to 0$, possibly with the involvement of 
 trees of singularities.
After perhaps refining our subsequence,  the associated gluing  scales  $T_{k,j}>0$ in the tree  can then be ordered such that, for all $j$,
$$T_{1,j}\leqslant T_{2,j}\leqslant ... \leqslant T_{n,j}.$$
By \cite{ozu1,ozu2}, each $g_j$ is then isometric to some such  $\hat{g}^n_{t_j,v_j}$ for $j \gg 0$. 

However, the two lowest eigenvalues of $W^+$ are everywhere negative and equal 
for the conformally K\"ahler $\lambda >0$ Einstein orbifold metric $g_\infty$. By Corollary \ref{controle curvature step k to k+1},
it therefore follows, for $j\gg 0$,  that the two lowest eigenvalues of the self-dual Weyl curvatures $W^+$ of our Einstein metrics $g_j$ 
are therefore  everywhere negative.  Since $W^+\in \End (\Lambda^+)$ is  trace-free, this means that these  $g_j$ must have   $\det (W^+) > 0$
everywhere. Thus, all but a finite number of the Einstein metrics $g_j$  must actually  satisfy  Wu's criterion \eqref{pengwu}.

It is therefore   a formal consequence that 
  all but a finite number of the original $g_i$ must  also satisfy \eqref{pengwu}.  Indeed, there would  otherwise be a subsequence $\{ g_{j^\prime}\} \subset 
\{ g_i\}$ consisting of Einstein metrics that {\em  do not}  satisfy \eqref{pengwu}. But since the  $\{ g_{j^\prime}\}$ would still  be an admissible sequence converging
to $(X, g_\infty )$,   the above argument  would then demonstrate   that \eqref{pengwu} must  hold for an infinite number of the $\{ g_{j^\prime}\}$, and this is of course
a contradiction. 
This shows   that,  for all  $i \gg 0$,  the original $\{ g_i\}$ must actually  satisfy \eqref{pengwu}, and must therefore \cite{lebdet,pengwu}  be conformally K\"ahler. 

 The classification \cite[Theorem A]{lebuniq} of conformally K\"ahler,  Einstein metrics on smooth, compact $4$-manifolds therefore tells us that these  $(M,g_i)$ must  actually 
 be K\"ahler-Einstein unless $M$ is either $\CP_2\# \overline{\CP}_2$ or $\CP_2\# 2\overline{\CP}_2$, equipped with a constant multiple of the 
  Page metric or the Chen-LeBrun-Weber metric, respectively. 
  Since these exceptional  metrics  are also, up to isometry and rescaling,   the {\em only} conformally K\"ahler, Einstein metrics on
  these exceptional manifolds,  the  $(M,g_i)$ must in fact be K\"ahler-Einstein for $i \gg 0$ if we assume that $(X,g_\infty )$
  either has an orbifold singularity, or is itself K\"ahler-Einstein. Theorems \ref{alpha} and \ref{beta} thus now follow, as promised. 
  \hfill $\Box$

\pagebreak 
\appendix

\section{Function Spaces}\label{function spaces}
    For a tensor $s$, a point $x$, $\alpha>0$ and a Riemannian manifold $(M,g)$. The Hölder seminorm is defined as
$$ [f]_{C^\alpha(g)}(x):= \sup_{\{y\in T_xM,|y|< \textup{inj}_g(x)\}} \Big| \frac{f(x)-f(\exp^g_x(y))}{|y|^\alpha} \Big|_g.$$

For orbifolds, we consider a norm which is bounded for tensors decaying at the singular points.
\begin{defn}[Weighted Hölder norms on an orbifold]\label{norme orbifold}
    Let $\beta\in \mathbb{R}$, $k\in\mathbb{N}$, $0<\alpha<1$ and  $(X,g_\infty )$ an orbifold. Then, for any tensor $s$ on $X$, we define
    \begin{align*}
        \| s \|_{C^{k,\alpha}_{\beta}(g_\infty)} &:= \sup_{X}\rad_o^{-\beta}\Big(\sum_{i=0}^k \rad_o^{i}|\nabla_{g_\infty}^i s|_{g_\infty} + \rad_o^{k+\alpha}[\nabla_{g_\infty}^ks]_{C^\alpha(g_\infty)}\Big).
    \end{align*}
\end{defn}

For ALE manifolds, we will consider a norm which is bounded for tensors decaying at infinity.

\begin{defn}[Weighted Hölder norms on an ALE orbifold]\label{norme ALE}
Let $\beta\in \mathbb{R}$, $k\in\mathbb{N}$, $0<\alpha<1$ and $(N,g_b)$ be an ALE manifold. Then, for any tensor $s$ on $N$, we define
   \begin{align*}
       \| s \|_{C^{k,\alpha}_{\beta}(g_b)}:= \sup_{N}\rad_b^\beta\Big( \sum_{i=0}^k\rad_b^{i}|\nabla_{g_b}^i s|_{g_b} + \rad_b^{k+\alpha}[\nabla_{g_b}^ks]_{C^\alpha({g_b})}\Big).
   \end{align*}
\end{defn}

Next, using a partition of unity 
$$1= \Upsilon_{X^t} + \sum_j \Upsilon_{N_j^t}$$
on $M$ , 
where $\Upsilon_{X^t}$ equals  $1$ when $g^D_t=g_\infty$ and $\Upsilon_{N_j^t}$ equals  $1$ where $g^D_t = t_j g_{b_j}$ (see \cite{ozu1}), we can   define a global norm.
\begin{defn}[Weighted Hölder norm on a na\"{\i}ve desingularization]\label{norme a poids M}
		Let $\beta\in \mathbb{R}$, $k\in\mathbb{N}$ and $0<\alpha<1$. We define for $s\in TM^{\otimes l_+}\otimes T^*M^{\otimes l_-}$ a tensor $(l_+,l_-)\in \mathbb{N}^2$, with $l:= l_+-l_-$ the associated conformal weight,
		$$ \|s\|_{C^{k,\alpha}_{\beta}(g^D)}:= \| \Upsilon_{X^t} s \|_{C^{k,\alpha}_{\beta}(g_\infty)} + \sum_j T_j^{\frac{l}{2}}\|\Upsilon_{N_{j}^t}s\|_{C^{k,\alpha}_{\beta}(g_{b_j})}.$$
\end{defn}

\subsubsection*{Decoupling norms.}

 For each singular point $p_k$, we consider the annuli 
 $A_k(t,\epsilon_0) := (\Phi_k)_*A_{\eucl}(\epsilon_0^{-1}\sqrt{t_k},\epsilon_0)$ and $B_k(\epsilon_0):=(\Phi_k)_*B_{\eucl}(0,\epsilon_0)$, as well as cut-off functions $\Upsilon_{A_k(t,\epsilon_0)}$ and $\Upsilon_{B_k(\epsilon_0)}$ respectively supported in $A_k(t,\epsilon_0)$ and $B_k(\epsilon_0)$, and equal to $1$ on $A_k(t,2\epsilon_0)$ and $B_k(\epsilon_0/2)$.

\begin{defn}[$C^{k,\alpha}_{\beta,*}$-norm on $2$-tensors] 
    Let $ h $ be a $2$-tensor on $(M,g^D)$,  $(X,g_\infty )$ or $(N,g_b)$. We define its $C^{k,\alpha}_{\beta,*}$-norm by
    $$\|h\|_{C^{k,\alpha}_{\beta,*}}:= \inf_{h_*,\mathfrak{H}_k} \|h_*\|_{C^{k,\alpha}_{\beta}} + \sum_k |\mathfrak{H}_k|_{\eucl},$$
    where the infimum is taken on the $(h_*,(\mathfrak{H}_k)_k)$ satisfying $h= h_*+\sum_k \Upsilon_{A_k(t,\epsilon_0)}\mathfrak{H}_k$ for $(M,g^D)$ or $h= h_*+\sum_k \Upsilon_{B_k(\epsilon_0)}\mathfrak{H}_k$ for  $(X,g_\infty )$ or $(N,g_b)$, where each $\mathfrak{H}_k$ is some constant and trace-free symmetric $2$-tensors on $\mathbb{R}^4\slash\Gamma_k$.
\end{defn}

 Finally, we also introduce a function space that is particularly well-suited  to Ricci-flat ALE spaces; cf.  \cite[Definition 5.18]{ozuthese}.

\begin{defn}[$C^{2,\alpha}_{\beta,**}$ norm on an ALE orbifold]\label{norm orbifold ALE}
    Let $(N,g_b)$ be an ALE orbifold, let $h$ be a $2$-tensor on $N$, and assume that $h = \mathfrak{H}^4 + \mathsf{O}(\rad_b^{-4-\beta})$ for $\beta>0$. For $\Upsilon$, a cut-off function supported in $N -  K$.
     We define the $C^{2,\alpha}_{\beta,**}$-norm of $h$ as:
    $$\|h\|_{C^{2,\alpha}_{\beta,**}}:= \sup \rad_b^{4}|\mathfrak{H}^4|_{g_\eucl} + \big\|(1+\rad_b)^{4}(h-\Upsilon \mathfrak{H}^4)\big\|_{C^{2,\alpha}_{\beta,*}}.$$
\end{defn}
Note that this definition  actually  corrects a minor  error in \cite{ozu2} that is fortunately   harmless in most contexts. For a detailed
discussion of this point, see   \cite[Remark A.12]{huoz} concerning   Ricci-flat  K\"ahler ALE metrics.

\vfill

\noindent {\bf Acknowledgments.} The authors would like to  thank the Simons Center for Geometry and Physics for its hospitality during the completion  of 
 this article. The first author would also like to express his gratitude to the Simons Foundation and the Fondation Sciences Math\'ematiques de Paris for their partial financial support
of this research.

\pagebreak

%
%
%

\end{document}